\documentclass[preprint,12pt]{elsarticle}



\usepackage{amssymb}
\usepackage{amsmath}
\usepackage{amsthm}
\usepackage{array}
\usepackage{rotating}
\usepackage{vmargin}
\setmarginsrb{2.2cm}{1cm}{2.2cm}{3cm}{1cm}{1cm}{1.5cm}{1.5cm}
\newtheorem{thm}{Theorem}

\newtheorem{prop}{Proposition}

\newtheorem{lem}[thm]{Lemma}
\newdefinition{rem}{Remark}
\newdefinition{defi}{Definition}
\newcolumntype{M}[1]{>{\raggedright}m{#1}}

\journal{*****}

\begin{document}

\begin{frontmatter}

\author{Thierry COMBOT\fnref{label2}}
\ead{thierry.combot@u-bourgogne.fr}
\address{IMB, Universi\'e de Bourgogne, 9 avenue Alain Savary, 21078 Dijon Cedex }

\title{Reduction of symbolic first integrals of planar vector fields}

\author{}

\address{}

\begin{abstract}
Consider a planar polynomial vector field $X$, and assume it admits a symbolic first integral $\mathcal{F}$, i.e. of the $4$ classes, in growing complexity: Rational, Darbouxian, Liouvillian and Riccati. If $\mathcal{F}$ is not rational, it is sometimes possible to reduce it to a simpler class first integral. We will present algorithms to reduce symbolic first integral to a lower complexity class. These algorithms allow to find the minimal class first integral and in particular to test the existence of a rational first integral except in the case where $\mathcal{F}$ is a $k$-Darbouxian first integral without singularities and $k\in\{2,3,4,6\}$. In this case, several examples are built and a procedure is presented which however requires the computation of elliptic factors in the Jacobian of a superelliptic curve.

\end{abstract}
\begin{keyword}
Integrability \sep Symbolic first integrals \sep Planar foliations
\end{keyword}
\end{frontmatter}

\section{Introduction}

Consider the differential system
$$(S)\quad \left\lbrace \begin{array}{rcl} \dot{x} &=& A(x,y), \\ \dot{y} &=& B(x,y), \end{array}\right. \qquad A,B\in \mathbb{Q}[x,y],\; A \wedge B=1$$
First integrals are \emph{non-constant functions} $\mathcal{F}$ that are constant along the solutions $\big(x(t),y(t)\big)$ of $(S)$. This property can be rewritten as being a solution of a partial differential equation
\begin{equation}\label{eqint}\tag{Eq}
A(x,y) \partial_x \mathcal{F}(x,y)+B(x,y) \partial_y \mathcal{F}(x,y)=0,
\end{equation}
Noting $D=A\partial_x+B\partial_y$, the first integrals are such that $D(\mathcal{F})=0$.

As given in \cite{cheze2020symbolic}, we will consider the $4$ classes of symbolic first integrals $\mathcal{F}$ of $(S)$, which are
\begin{itemize}
\item \emph{rational} if there exists $F\in\overline{\mathbb{Q}}(x,y)\setminus \overline{\mathbb{Q}}$ such that 
\begin{equation}\label{eqtype0}\tag{Rat}
\mathcal{F}-F(x,y)=0.
\end{equation}
\item \emph{$k$-Darbouxian} if there exists $F(x,y)$ such that 
\begin{equation}\label{eqtype1}\tag{D}
\partial_y \mathcal{F}-F(x,y) =0,\qquad F^k\in\overline{\mathbb{Q}}(x,y)\setminus \{0\}.
\end{equation}
\item \emph{Liouvillian}  if there exists $F\in\overline{\mathbb{Q}}(x,y)$ such that 
\begin{equation}\label{eqtype2}\tag{L}
\partial_y^2 \mathcal{F}-F(x,y)\partial_y \mathcal{F}=0.
\end{equation}
\item \emph{Riccati} if  $\mathcal{F}$ is the 
quotient of two independent solutions over $\overline{\mathbb{Q}(x)}$  of
\begin{equation}\label{eqtype3}\tag{Ric}
\partial_y^2 \mathcal{F}-F(x,y) \mathcal{F}=0,\qquad F\in\overline{\mathbb{Q}}(x,y).
\end{equation}
\end{itemize}
Recall that when $(S)$ admits such first integrals with an associated $F\in \overline{\mathbb{Q}}(x,y)$, then in fact it admits one with $F\in \mathbb{Q}(x,y)$. The algorithms in \cite{cheze2020symbolic} allow to find such first integrals up to some degree bound on $F$ given by the user. We order them in growing class
$$\hbox{Rational} < \hbox{Darbouxian} <\hbox{Liouvillian} <\hbox{Riccati}$$
However, nothing guarantees that the first integral found by \cite{cheze2020symbolic} will be in the simplest class possible. In fact, it is typical that a lower class first integral exists, but of much higher degree, and thus was not detected as it exceeded the bound given by the user.

The reduction problem consists of, given a first integral, find a lower class first integral or prove it does not exist. As we will see, the particular case of testing if a rational first integral exists is the most difficult case. It is a sub problem of the Poincare problem \cite{pereira2002poincare}, which look for an algorithm for finding rational first integral of polynomial vector fields. In our case, the knowledge of a (transcendental) first integral will be put to profit to solve the question of existence of a rational first integral, except in a few resistant cases.

\begin{thm}\label{thm2}
Consider a polynomial vector field $X=(A,B),\; A,B\in\mathbb{Q}[x,y]$ which admits a Riccati first integral $\mathcal{F}$, defined by its associated $F\in \mathbb{Q}(x,y)$. Then algorithm \underline{\sf ReduceRiccati} returns a lower class first integral if and only if such integral exists.
\end{thm}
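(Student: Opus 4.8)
The plan is to reduce the question to the second-order operator $L:=\partial_y^2-F$, read in $\partial_y$ over $\overline{\mathbb{Q}(x)}(y)$ (constants $\overline{\mathbb{Q}(x)}$), together with the constraint that $\mathcal{F}$ is $D$-invariant, and to let \underline{\sf ReduceRiccati} carry out Kovacic's analysis of $L$ and the construction attached to each of its four cases. Writing $\mathcal{F}=u_1/u_2$ with $u_1,u_2$ two independent solutions of $L$, one checks that $u_1,u_2$ are unique up to a common scalar, that $W:=u_1'u_2-u_1u_2'$ is free of $y$, that $\partial_y\mathcal{F}=W/u_2^{2}$ and, with $r_i:=u_i'/u_i$, that $r_i'+r_i^{2}=F$, $r_1-r_2=W/(u_1u_2)$ and $\partial_y^{2}\mathcal{F}=-2r_2\,\partial_y\mathcal{F}$; that $\tfrac1A\partial_y\mathcal{F}$ is an integrating factor of $A\,dy-B\,dx$, so every inverse integrating factor of $X$ is $u_2^{2}$ up to a rational factor and a first integral; and that the differential field generated by $\mathcal{F}$ over $\overline{\mathbb{Q}(x)}(y)$ is the Picard--Vessiot extension of $\mathrm{Sym}^{2}L$, with Galois group the projective Galois group of $L$.

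For the ``if'' direction one produces, whenever $L$ has a Liouvillian solution, an explicit first integral of class at most Liouvillian: if $L$ is reducible, taking for $u_2$ the eigenfunction makes $r_2$ rational and $\partial_y^{2}\mathcal{F}=-2r_2\,\partial_y\mathcal{F}$ exhibits $\mathcal{F}$ as Liouvillian; if $L$ is imprimitive, $\mathrm{Sym}^{2}L$ has a rational semi-invariant, from which $s:=r_1+r_2$, $p:=r_1r_2=\tfrac12(s'+s^{2}-2F)$ and $\Delta:=s^{2}-4p=(r_1-r_2)^{2}$ come out rational, so the first integral $\log\mathcal{F}$ (indeed $D\log\mathcal{F}=D\mathcal{F}/\mathcal{F}=0$) satisfies $(\partial_y\log\mathcal{F})^{2}=\Delta$ and is $2$-Darbouxian; if $L$ has finite Galois group, $\mathcal{F}$ is algebraic over $\overline{\mathbb{Q}}(x,y)$, and applying $D$ to its minimal polynomial $P(x,y,z)$ gives $A\partial_xP+B\partial_yP\in(P)$, so the curves $\{P(x,y,c)=0\}$ form an infinite family of invariant algebraic curves of $X$ of bounded degree and the Darboux--Jouanolou theorem yields a rational first integral, recovered effectively. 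Here fields of definition, and the compatibility constraints that $D$-invariance imposes on the choice of $u_2$ and on the descent to $\overline{\mathbb{Q}}(x,y)$, are handled as in the introductory remark; in each case \underline{\sf ReduceRiccati} outputs the resulting first integral and, recursing through the reductions of the lower classes, returns one of the minimal class it can reach. In particular it returns \emph{some} lower class first integral whenever one exists.

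The ``only if'' direction is the core. Assume $L$ has no Liouvillian solution, i.e.\ its Galois group is $\mathrm{SL}_2$. Then the differential field generated by $\mathcal{F}$ is a Picard--Vessiot extension with Galois group $\mathrm{PSL}_2$, which is not solvable-by-finite, so $\mathcal{F}$ lies in no Liouvillian extension of $\overline{\mathbb{Q}(x)}(y)$. If $X$ had a first integral below the Riccati class it would have a Liouvillian one $\mathcal{G}$; since $\partial_y\mathcal{G}$ and $\partial_y\mathcal{F}$ both satisfy $D\nu=c\,\nu$ for one fixed rational $c$, their ratio is again a first integral, and by Singer's theorem $X$ has a Darbouxian inverse integrating factor. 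By the description of inverse integrating factors above this forces $u_2^{2}$, and after extracting a square root $u_2$ itself, into a Liouvillian extension, so $L$ would have a Liouvillian solution — contradiction. Equivalently, a Darbouxian inverse integrating factor would, up to rational and Darbouxian factors, be a $\mathrm{PSL}_2$-semi-invariant inside the Picard--Vessiot extension of $\mathrm{Sym}^{2}L$, which is impossible since $\mathrm{Sym}^{2}$ of the standard representation is an irreducible $\mathrm{SL}_2$-module and $\mathrm{PSL}_2$ has no nontrivial characters. So in this case \underline{\sf ReduceRiccati} correctly reports that no reduction exists; with the completeness of Kovacic's analysis this proves the equivalence.

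The step I expect to be the genuine obstacle is this ``only if'' direction. Because the function $h$ with $\mathcal{G}=h(\mathcal{F})$ may be transcendental, one cannot argue on $\mathcal{F}$ alone; the argument must run through the integrating factor of $X$ — with careful control of the constant field, of the algebraic extensions in the parameter $x$ that Kovacic's algorithm introduces, and of the square root of the inverse integrating factor — or through the Picard--Vessiot/groupoid description of the foliation's transverse structure. A secondary but unavoidable point is to make the Darboux--Jouanolou step of the finite-Galois case effective.
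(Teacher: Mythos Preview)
Your ``if'' direction (steps $1$--$5$ via Kovacic) is essentially the paper's Lemma~\ref{lem0}, and that part is fine. The gap is in your ``only if'' direction, and it is not a technicality but the main content of the theorem.

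You claim that when the Galois group of $L=\partial_y^2-F$ is $\mathrm{SL}_2$ then $X$ has no lower-class first integral. This is false. Your argument runs: a lower-class first integral gives, via Singer, a Darbouxian integrating factor $R$; since every integrating factor is $u_2^{-2}$ times a first integral, $u_2^2=(\text{first integral})/R$ is Liouvillian; contradiction. But the first integral appearing here is not controlled. Concretely, if $X$ has a \emph{rational} first integral $J$, write $\mathcal{F}=f(J)$ with $f=f_1/f_2$ and $f_i''=u(z)f_i$; then $u_2=c\,f_2(J)/\sqrt{\partial_yJ}$, and the first integral in your quotient is precisely $c^2 f_2(J)^2$. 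When the one--variable equation $f_i''=u f_i$ has Galois group $\mathrm{SL}_2$ (which genuinely occurs: take $u$ with an irregular singularity, or two non--algebraic regular singularities), $f_2(J)^2$ is not Liouvillian, $u_2$ is not Liouvillian, $L$ has $\mathrm{SL}_2$ Galois group, and yet $X$ has the rational first integral $J$. Your argument gives no contradiction in this case.

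This is exactly why the algorithm has steps $6$ and $7$, which your proposal never mentions. The paper handles the case of a rational $J$ by applying the Schwarzian chain rule to $\mathcal{F}=f(J)$, obtaining equation~\eqref{eq3} for $f$, and then classifying the singularity structure of~\eqref{eq3}. When~\eqref{eq3} has at least two non--algebraic singularities, a bound on $\partial_y\ln J$ gives step~$6$. When it has one irregular singularity, or only algebraic singularities with the associated superelliptic curve of genus $\ge 2$, Mason's $abc$--theorem and Riemann--Hurwitz (Lemmas~\ref{lem1} and~\ref{lem2}) bound the degree of $J$, giving step~$7$. Only the residual genus $\le 1$ cases are shown, via Lemmas~\ref{lem3} and~\ref{lem4} and Kimura's table, to force~\eqref{eq3} (hence $L$) to be virtually solvable, falling back to steps~$1$--$5$. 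None of this is captured by your Picard--Vessiot\,/\,integrating--factor reasoning, and without it the equivalence fails.
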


\begin{thm}\label{thm1}
Consider a polynomial vector field $X=(A,B),\; A,B\in\mathbb{Q}[x,y]$ which admits a Liouvillian first integral $\mathcal{F}$, defined by its associated $F\in \mathbb{Q}(x,y)$. Then algorithm \underline{\sf ReduceLiouvillian} returns a lower class first integral if and only if such integral exists.
\end{thm}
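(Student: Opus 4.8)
The plan is to convert the Liouvillian datum $F$ into an explicit Darbouxian integrating factor of the dual $1$-form, to reduce the existence of a first integral of lower class to a membership test among Darboux cofactors, and then to make that test effective using the integrating factor produced along the way.

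\emph{First, from $F$ to an explicit integrating factor.} I would set $\omega:=B\,dx-A\,dy$, so that $D(\mathcal G)=0$ is equivalent to $d\mathcal G=h\,\omega$ for some function $h$. Put $P:=F-\partial_y\log A$ and $Q:=-(\mathrm{div}\,X+PB)/A$ (both in $\mathbb Q(x,y)$), and form $\eta:=Q\,dx+P\,dy$. One checks that $AQ+BP=-\mathrm{div}\,X$, whence $\eta\wedge\omega+d\omega=0$, and that the hypothesis ``$\mathcal F$ is a Liouvillian first integral with associated $F$'' is \emph{equivalent} to $\eta$ being closed. Then $R:=\exp\int\eta$ (well defined up to a multiplicative constant) satisfies $d(R\,\omega)=0$ and $R\,\omega=d\mathcal F$ up to that constant, i.e.\ $R$ is a Darbouxian integrating factor — an explicit instance of the classical equivalence between Liouvillian integrability and the existence of such a factor. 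Partial fractions give $\eta=d(p/q)+\sum_j\lambda_j\,dg_j/g_j$ with $g_j$ irreducible over $\overline{\mathbb Q}$ and $\lambda_j\in\overline{\mathbb Q}$, hence $R=e^{p/q}\prod_j g_j^{\lambda_j}$; each $g_j$ is an invariant algebraic curve, and applying $D$ to $\log R$ yields
\[
-\mathrm{div}\,X \;=\; D(p/q)\;+\;\sum_j\lambda_j\,c(g_j),
\]
where $c(g_j)=D(g_j)/g_j$ and $L:=D(p/q)\in\mathbb Q[x,y]$ is the cofactor of the exponential factor $e^{p/q}$.

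\emph{Next, reduction to a linear condition and construction of the integral.} Since the ratio of two integrating factors of $\omega$ is a first integral and, conversely, $R$ times a first integral is again an integrating factor, a first integral of class strictly below Liouvillian — that is, $k$-Darbouxian for some $k\ge 1$, rational being included as $k=1$ with trivial logarithmic part — exists if and only if $\omega$ admits an integrating factor which is a $k$-th root of a rational function. Expanding $d(r^{1/k}\omega)$ shows this amounts to the existence of $r\in\mathbb Q(x,y)^{*}$ and $k\in\mathbb Z_{>0}$ with $D(r)=-k\,(\mathrm{div}\,X)\,r$; equivalently, since such an $r$ must be $\prod_i p_i^{n_i}$ with the $p_i$ irreducible invariant curves, to $-\mathrm{div}\,X$ lying in the $\mathbb Q$-vector space $W$ spanned by the cofactors of the irreducible invariant curves of $X$. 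When this holds, $\mathcal G:=\int r^{1/k}\omega$, obtained by splitting $r^{1/k}\omega$ into logarithmic and exact parts on the cyclic $k$-cover ramified over the support of $r$, is a $k$-Darbouxian first integral, which \underline{\sf ReduceLiouvillian} returns; since the output is verified by substitution and is $k$-Darbouxian by construction, soundness is immediate.

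\emph{Finally, effectivity — the main obstacle.} It remains to decide $-\mathrm{div}\,X\in W$ algorithmically. If $p/q$ is constant, then $R=\prod_j g_j^{\lambda_j}$ and one checks directly whether all $\lambda_j\in\mathbb Q$: if so, $r:=\prod_j g_j^{k\lambda_j}$ with $k$ a common denominator works and $\mathcal F$ is itself $k$-Darbouxian; this is the easy branch. In general, the relation above already expresses $-\mathrm{div}\,X-L$ through the \emph{known} cofactors $c(g_j)$, and all cofactors in play lie in the finite-dimensional space $\mathbb Q[x,y]_{\le \deg X-1}$ (with $\deg X=\max(\deg A,\deg B)$); using this, together with the structure forced by the Liouvillian first integral $\mathcal F$ (any other first integral is a function of $\mathcal F$, which constrains any competing integrating factor), one bounds the degree of the invariant curves that still need to be searched for. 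The search and the concluding $\mathbb Q$-linear algebra on cofactors are then carried out with the algorithms of \cite{cheze2020symbolic}, run with this \emph{effective} bound, and combining this with the previous step gives that \underline{\sf ReduceLiouvillian} returns a first integral of lower class exactly when one exists. Producing that degree bound — extracted from the Darbouxian integrating factor $R$ that the Liouvillian hypothesis furnishes, so as to avoid the unbounded Poincar\'e-type search one faces in its absence \cite{pereira2002poincare} — is where I expect the real difficulty to lie.
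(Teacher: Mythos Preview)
Your reduction to the question ``does $\omega$ admit an integrating factor which is a $k$-th root of a rational function, equivalently is $-\operatorname{div}X$ a $\mathbb{Q}$-combination of cofactors of invariant curves'' is correct in principle, but the proposal stops precisely at the point where the content lies: you never produce the degree bound on the invariant curves to be searched, and you say so yourself. Without that bound your procedure is just the classical Darboux/Singer criterion together with an unbounded search for Darboux polynomials --- exactly the Poincar\'e-type situation you wanted to avoid. Saying ``the structure forced by the Liouvillian first integral\ldots constrains any competing integrating factor'' is not a proof; it is a restatement of the problem.

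The paper's argument is structurally different and does not pass through a global enumeration of invariant curves or cofactors. It writes $\mathcal{F}=f(J)$ for the hypothetical lower-class first integral $J$, differentiates twice, and analyses the one-variable function $f$. If $J$ is Darbouxian but not rational, a short argument forces $f''/f'$ to be constant, so $f$ is affine (then $\mathcal{F}$ itself is Darbouxian: step~3, a residue test on $G\,dx+F\,dy$) or $f$ is exponential-affine (then $\mathcal{F}=We^{\int Gdx+Fdy}$ with $W$ rational: step~4, solving an integrable connection). If $J$ is rational, then $f''/f'\in\mathbb{C}(z)$, and the singularity structure of $e^{\int f''/f'}$ forces either an irrational ramification --- pinning the factors of $J$ among the poles of $F$, hence $\partial_y\ln J$ has denominator dividing $\operatorname{sf}(\operatorname{den}F)$ (step~2) --- or an essential singularity, in which case the polar part $F_0$ of $\int Gdx+Fdy$ is itself a rational first integral with explicitly bounded numerator and denominator (step~1). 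Each branch gives a concrete linear system or a rational-solutions problem of bounded size; no open-ended search for invariant curves occurs. That case analysis on $f$ is the missing idea in your proposal.
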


\begin{thm}\label{thm3}
Consider a polynomial vector field $X=(A,B),\; A,B\in\mathbb{Q}[x,y]$ which admits a $k$-Darbouxian first integral $\mathcal{F}$, defined by its associated $F\in \mathbb{Q}(x,y)$. Assume that $k\neq 2,3,4,6$, or $\mathcal{F}$ admits at least a singularity. Then algorithm \underline{\sf ReduceDarbouxian} returns a rational first integral if and only if such integral exists.
\end{thm}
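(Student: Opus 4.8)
The plan is to reduce the existence of a rational first integral to the question of whether $\mathcal{F}$ factors through a rational map, then to read off from the integrating factor the branch data that governs such a factorisation, and finally to split into cases according to an orbifold Euler characteristic.

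\emph{Structural reduction.} From $\partial_y\mathcal{F}=F$ and $A\partial_x\mathcal{F}+B\partial_y\mathcal{F}=0$ one gets $d\mathcal{F}=\mu\,\omega$ with $\omega=A\,dy-B\,dx$ and $\mu=F/A$, so $\mu$ is an integrating factor of the foliation with $R:=\mu^k=F^k/A^k\in\mathbb{Q}(x,y)$. I would first prove that $X$ has a rational first integral iff $\mathcal{F}=\phi\circ G$ for a primitive $G\in\mathbb{Q}(x,y)$ and a one-variable $\phi$ with $\phi'(t)^k=\rho(t)$, $\rho\in\mathbb{Q}(t)$: a primitive rational first integral is necessarily $\mathbb{P}^1$-valued (a rational surface cannot dominate a curve of positive genus with connected fibres, since it would inject its space of holomorphic $1$-forms), $\mathcal{F}$ is constant on its fibres hence a function $\phi$ of $G$, and $dG=\nu\,\omega$ gives $\phi'(G)=\mu/\nu$, whose $k$-th power is rational in $(x,y)$ and constant on leaves, hence lies in $\mathbb{Q}(G)$; conversely such a $G$ is itself a rational first integral. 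So it suffices to decide the existence of, and compute, such a pair $(G,\rho)$.

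\emph{Branch data and the trichotomy.} Factor $R=c\prod_i f_i^{n_i}$ with $f_i\in\mathbb{Q}[x,y]$ irreducible; the polar/zero locus of an integrating factor is $X$-invariant, so each $f_i=0$ is a Darboux curve. Call $f_i$ essential when $e_i:=k/\gcd(k,n_i)>1$. If a reduction exists, every essential curve is a component of a fibre $\{G=\lambda_i\}$ where $\rho$ has a zero or pole of order $\not\equiv 0\pmod k$, so the target $\mathbb{P}^1$ carries an orbifold structure $\mathcal{O}$ with cone points of orders $e_i$ (up to the extra cone points that may sit on the line at infinity or on components of reducible fibres, which are only finitely ambiguous), and the cyclic cover $z^k=\rho(t)$ is a smooth curve $\mathcal{C}$ with $2-2g(\mathcal{C})=k\,\chi(\mathcal{O})$, $\chi(\mathcal{O})=2-\sum_i(1-1/e_i)$. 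Algorithm \underline{\sf ReduceDarbouxian} branches on $\chi(\mathcal{O})$. If $\chi(\mathcal{O})<0$ there are at least three essential curves; their members of a common pencil $\{aP+bQ=0\}$ are collinear in $\mathbb{P}(H^0(\mathcal{O}(d)))$, so two full fibres already pin the pencil down, and $\deg G$ is effectively bounded because the $f_i$ are Darboux curves of $X$ (a Poincaré-type estimate, made effective here by the known transcendental first integral); ranging over the finitely many combinatorial shapes of reducible fibres and behaviours at infinity one obtains finitely many explicit candidates $G$, and for each one tests whether $(\mu/\nu)^k\in\mathbb{Q}(G)$, which when true forces $\mathcal{F}=\pm\int\rho(G)^{1/k}\,dG+\mathrm{const}$ and outputs $G$. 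If $\chi(\mathcal{O})>0$ then, the cover being cyclic, $\mathcal{O}$ is trivial — so $\mu$ is rational and $\mathcal{F}=\sum c_i\log f_i+(\text{rational})$, a first integral forced to admit a rational one by a cofactor argument grouping the $c_i$ by $\mathbb{Q}$-linear dependence — or of type $(n,n)$ — so $\mathcal{C}$ has genus $0$ and $\phi$ is either algebraic, whence $\mathcal{F}$ is algebraic over $\mathbb{Q}(x,y)$ and its minimal polynomial yields a rational first integral, or logarithmic after a rational change of variable, whence one recurses with strictly smaller $k$. Finally, if $\chi(\mathcal{O})=0$ then $\mathcal{C}$ is an elliptic curve and $\mathcal{O}$ is one of $(2,2,2,2),(3,3,3),(2,4,4),(2,3,6)$, i.e. $k\in\{2,3,4,6\}$ with no extra cone point — the singularity-free exceptional case we have excluded — where existence hinges on locating the prescribed elliptic factor inside the Jacobian of the superelliptic curve, which is exactly the procedure alluded to in the introduction.

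\emph{Main obstacle.} The heart of the matter is the rigidity claim in the hyperbolic regime: one must show that $\chi(\mathcal{O})<0$ genuinely rules out a positive-dimensional family of competing rational first integrals and effectively bounds $\deg G$, so that a finite search suffices; this is where the branch-point count has to be combined with the Darboux degree bound attached to $\deg X$ and with the bookkeeping of reducible fibres and of the line at infinity — the same bookkeeping that makes precise the dichotomy "with/without singularities" and pins down exactly when $\chi(\mathcal{O})=0$. The converse direction is immediate: any $G$ returned comes with the verified identity $D(G)=0$, and in the recursive branch the intermediate reductions are class-preserving, so the output is a rational first integral if and only if one exists; termination is clear since $k$ strictly decreases along the recursion.
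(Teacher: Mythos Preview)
Your trichotomy by the orbifold Euler characteristic (equivalently, by the genus $g$ of the auxiliary curve $\mathcal{C}:z^k=\rho(t)$) matches the paper's case split, and the structural reduction $\mathcal{F}=\phi\circ G$ with $\phi'^k\in\mathbb{Q}(t)$ is correct. However, two of your three branches contain genuine gaps.

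\textbf{The hyperbolic bound is not established.} You write that when $\chi(\mathcal{O})<0$ the essential curves ``pin the pencil down'' and $\deg G$ is bounded ``by a Poincar\'e-type estimate''. This does not work: an essential curve is only a \emph{component} of a special fibre $G=\lambda_i$, and the fibre may have further components whose multiplicities happen to be divisible by $e_i$, so they do not appear in the factorisation of $R$. Hence you do not know any full fibre of $G$, and two partial fibres do not determine a pencil. Invoking a Poincar\'e estimate is circular, since that is precisely the degree bound you are trying to prove. The paper obtains the bound differently: restricting to a generic line, the relation $\tilde{S}(J)=SV^k$ defines a morphism from the superelliptic curve $w^k=S(x,z(x-x_0)+y_0)$ to $\mathcal{C}$, and Riemann--Hurwitz together with the explicit inequality $g(\mathcal{C})\ge k/12+1$ (Lemma~\ref{lem2}) gives $\deg J\le 6(\deg\hbox{sf}(S)-1)$. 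The remaining genus-$0$ and genus-$1$ subcases with a pole are handled by Mason's $ABC$ theorem (Lemma~\ref{lem1}), which is again an argument about multiplicities that you do not supply.

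\textbf{The $\chi=0$ case with a singularity, and the $k=2$ log case, are missing.} You say $\chi(\mathcal{O})=0$ is ``the singularity-free exceptional case we have excluded'', but the theorem only excludes $k\in\{2,3,4,6\}$ \emph{and} no singularity. When $g(\mathcal{C})=1$ and $\phi'$ has a pole (so $\mathcal{F}$ is singular), the algorithm must still succeed; the paper does this via Lemma~\ref{lem1}. Separately, for $k=2$ with $\mathcal{C}$ of genus $0$ and a single pole, one gets $\phi(z)=a\ln\frac{1-\sqrt{z}}{1+\sqrt{z}}$; your suggestion to ``recurse with strictly smaller $k$'' cannot apply from $k=2$. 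The paper instead reduces this to deciding whether a specific divisor on the hyperelliptic curve $w^2=S(x,z(x-x_0)+y_0)$ is torsion (Lemma~\ref{lemtor} and step~8 of the algorithm), a computation your proposal omits entirely.
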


Applying sequentially these algorithms, we can find the first integral of minimal class of $X$ except when encountering in Theorem \ref{thm3} the forbidden case. The forbidden case could be solved if we could compute the Mordell Weil group of an elliptic threefold \cite{hulek2011calculating}. An example of Lins Neto \cite{neto2002some} happens to be in this case, and we will reduce it following this approach, alongside others. In section $2$, we will prove Theorem \ref{thm1}, in section $3$ Theorem \ref{thm2}, in section $4$ Theorem \ref{thm3} and in section $5$ present a procedure and examples falling in the unsolved case.\\

Notations: We note $\hbox{num}$ the numerator of a rational fraction, $\hbox{den}$ the denominator, and $\hbox{sf}$ the square free part of a polynomial, the polynomial with factor multiplicity removed.

\section{Reduction of Liouvillian first integrals}

Recall that a Liouvillian first integral $\mathcal{F}$ is defined up to affine transformation by
$$\frac{\partial_y^2 \mathcal{F}}{\partial_y \mathcal{F}}=F.$$
We call $\mathcal{R}=\partial_y \mathcal{F}$ the integrating factor. It is a hyperexponential function, i.e. a function whose logarithmic differential is a rational $1$ form. We have
$$\frac{\partial_x \mathcal{R}}{\mathcal{R}}=\frac{\partial_x\partial_y \mathcal{F}}{\partial_y \mathcal{F}}=\frac{\partial_y \left(-\frac{B}{A} \partial_y \mathcal{F}\right)}{\partial_y \mathcal{F}}=\frac{\partial_y \left(-\frac{B}{A}\right) \partial_y \mathcal{F}-\frac{B}{A} \partial_y^2 \mathcal{F} }{\partial_y \mathcal{F}}=-\partial_y \left(\frac{B}{A}\right)-\frac{B}{A} F$$
and we note this rational function $G$. Thus we have $\frac{d\mathcal{R}}{\mathcal{R}}=Gdx+Fdy$ and then the first integral $\mathcal{F}$ can be written
$$\mathcal{F}(x,y)= \int e^{\int G dx+Fdy} \left(-\frac{B}{A}dx+dy\right)$$
If $X$ also admits another first integral $J$, then $\mathcal{F}$ is a function of $J$, and thus
$$\mathcal{F}(x,y)=f(J(x,y)),\quad f \hbox{ one variable function}$$
The function $f$ can be multivalued, but is locally analytic except at finitely many points which are the image by $J$ of the singular points of $\mathcal{F}$.\\

\noindent\underline{\sf ReduceLiouvillian}\\
\textsf{Input:} Vector field $X=(A,B),\; A,B\in\mathbb{Q}[x,y],\; A\wedge B=1$, and $F\in\mathbb{Q}(x,y)$ corresponding to a Liouvillian first integral of $X$. \\
\textsf{Output:} An equation defining a rational or $k$-Darbouxian first integral, or ``None''
\begin{enumerate}
\item Note $G=-\partial_y \left(\frac{B}{A}\right)-\frac{B}{A} F$, $\tilde{Q}=\hbox{lcm}(\hbox{den}(F),\hbox{den}(G))$, $Q=\tilde{Q}/\hbox{sf}(\tilde{Q})$. Search a non constant solution of the linear system
$$D\left(\frac{P}{Q}\right)=0$$
with $P\in\mathbb{Q}[x,y],\; \deg P \leq 1+\max (\deg F\tilde{Q}, \deg G\tilde{Q} ) -\deg \hbox{sf}(\tilde{Q})$. If one, return $\mathcal{F}-\tfrac{P}{Q}=0$.
\item Note $Q=\hbox{sf}(\hbox{den}(F))$. Search a non zero solution of the linear system
$$D\left(\frac{P}{Q}\right)+A\partial_y\left(\frac{B}{A}\right) \frac{P}{Q}=0$$
for $P\in\mathbb{Q}[x,y],\; \deg P \leq \deg Q -1$. If one, return $\partial_y \mathcal{F}-\tfrac{P}{Q}=0$.
\item Note $\tilde{F}=F+\epsilon G$. If $\tilde{F}$ has only simple poles, $\deg \hbox{num}(\tilde{F}) < \deg \hbox{den}(\tilde{F})$, and
$$S=\hbox{resultant}(\hbox{num}(\tilde{F})-\lambda (\partial_y \hbox{den}(\tilde{F})+\epsilon \partial_x \hbox{den}(\tilde{F})) , \hbox{den}(\tilde{F}) ,y)$$
has only rational solutions with $k$ the lcm of their denominators, return
$$\partial_y \mathcal{F}-\left(\prod\limits_{\lambda\in S^{-1}(0)} \hbox{gcd}(\hbox{num}(\tilde{F})-\lambda (\partial_y \hbox{den}(\tilde{F})+\epsilon \partial_x \hbox{den}(\tilde{F})), \hbox{den}(\tilde{F}))^{k\lambda}\right)^{1/k} $$
\item Look for rational solutions of the PDE system
$$A\partial_x W +B\partial_y W+(AG+BF)W=0,\quad \partial_y^2 W+F\partial_y W +W \partial_y F=0$$
If a rational solution $W$ exists, return $\partial_y \mathcal{F}-(F+\tfrac{\partial_y W}{W} ) $
\item Return ``None''
\end{enumerate}

\begin{proof}[Proof of Theorem \ref{thm1}]
Assume $X$ admits another first integral $J$ of a lower class, so $k$-Darbouxian or rational. We can then write
$$\mathcal{F}(x,y)=f(J(x,y)),\quad f \hbox{ one variable function}$$
We differentiate both sides giving
$$\partial_y \mathcal{F}(x,y)= \partial_y J(x,y) f'(J(x,y))$$
$$\partial_y^2 \mathcal{F}(x,y)= \partial_y^2 J(x,y) f'(J(x,y))+(\partial_y J(x,y))^2 f''(J(x,y))$$
Making the quotient, we get
$$F(x,y)= \frac{\partial_y^2 J(x,y)}{\partial_y J(x,y)} +(\partial_y J(x,y)) \frac{f''}{f'}(J(x,y))$$
Let us first assume that $X$ admits no rational first integral and thus $J$ is $k$-Darbouxian. Consider the equation
$$\frac{F(x,y)- \frac{\partial_y^2 J(x,y)}{\partial_y J(x,y)}}{\partial_y J(x,y)}=h.$$
If the left hand side is constant, then $\frac{f''}{f'}=h$ for some $h\in\mathbb{C}$ as a function. If the left hand side is not constant, this defines an algebraic curve $\mathcal{C}_h$. As $X$ does not admit a rational first integral, it has at most finitely many Darboux polynomials, and thus there exists $h\in\mathbb{C}$ such that $J_{\mid \mathcal{C}_h}$ is not constant. But then
$$\frac{f''}{f'}(J_{\mid \mathcal{C}_h})=h$$
and thus again $\frac{f''}{f'}=h$ as a function. Solving this differential equation, we conclude that either $f(z)=az+b$ or $f(z)=ae^{bz}+c$ with $a,b,c\in\mathbb{C}$.
The first case implies that $\mathcal{F}$ itself should be a $k$-Darbouxian first integral, and thus that $ e^{\int Gdx+Fdy}$ should be algebraic. This implies that $\int Fdy+Gdx$ is a rational linear combination of logs, and thus 
$Gdx+Fdy$ should have only simple poles. Considering this $1$-form restricted to a line of slope $\epsilon$, we follow Trager \cite{trager1984integration} which gives that the residues are solutions of the polynomial $S$ computed in step $3$. For a generic $\epsilon$, no residues will be missed as the line of slope $\epsilon$ will intersect transversally all poles of $Gdx+Fdy$. Also these residues will not depend on $\epsilon$ as the form $Gdx+Fdy$ is closed. We then test if these residues are rational, and then return the $k$-Darbouxian first integral in step $3$.

The second case implies that $\mathcal{F}$ is (up to affine transformation) the exponential of a Darbouxian first integral. The only possibility is then that $ \int e^{\int Gdx+Fdy} (-\frac{B}{A}dx+dy)$ integrates using a hyperexponential function. But then such a solution can be written $We^{\int Fdy+Gdx}$ with $W$ rational. We can now plug this expression in the two equations that must be satisfied by $\mathcal{F}$
$$\partial_y^2 \mathcal{F}-F\partial_y \mathcal{F}=0,\quad A\partial_x\mathcal{F}+B\partial_y \mathcal{F}=0$$
which gives the system
$$A\partial_x W +B\partial_y W+(AG+BF)W=0,\quad \partial_y^2 W+F\partial_y W +W \partial_y F=0$$
Such system can be solved in rational functions as an integrable connection \cite{barkatou2012computing}. It has at most a vector space of dimension $2$ of rational solutions. If it was $2$, remembering that $\mathcal{F}=1$ is a solution of the previous system, we would have a rational $W$ such that $We^{\int Fdy+Gdx}=1$. Such case with a rational integrating factor would have already been detected in step $3$ as then all roots of $S$ would be integers. Thus a most one solution up to constant factor exists. Step $4$ then returns the equation for the Darbouxian first integral $\ln W +\int Fdy+Gdx$.

Let us now assume $X$ admits a rational first integral. We have
$$\frac{f''}{f'}(J(x,y))=\frac{F(x,y)- \frac{\partial_y^2 J(x,y)}{\partial_y J(x,y)}}{\partial_y J(x,y)} \in \mathbb{C}(x,y)$$
and thus the right-hand side is a function of $J$ only. We can assume that $J$ is indecomposable, and then right-hand side is a rational function of $J$ (see \cite{cheze2014decomposition}). Thus $\frac{f''}{f'}\in\mathbb{C}(z)$. Solving this equation gives
$$f(z)= \int e^{\int g(z) dz} dz,\;\; g\in\mathbb{C}(z)$$
If $e^{\int g(z) dz}$ admits a ramification point $z_0$ with irrational exponent, as the sum of the residues of $g$ over $\mathbb{P}^1$ should be zero, there exists another point $z_1$ with irrational ramification exponent. Composing $f$ with a Moebius transformation, we can assume $z_0=0,z_1=\infty$.
Thus the curves $\hbox{num}(J)=0$ and $\hbox{den}(J)=0$ are ramification curves with irrational exponents of $\mathcal{F}$. Noting
$$\int Gdx+Fdy= F_0(x,y)+\sum \lambda_i \ln F_i(x,y),$$
the ramification curves of $\mathcal{F}$ are factors of the $F_i$. Thus the factors of $J$ are the factors of the $F_i$, and thus the poles of $\partial_y \ln J$ are factors of the $F_i$. These factors of the $F_i$ also appear as poles of the differential form $Gdx+Fdy$. The factors of $J$ depending only on $x$ do not give poles to $\partial_y \ln J$ due to the $y$ differentiation, and thus the poles of $\partial_y \ln J$ are factors of $\hbox{den}(F)$. They are all of order $1$, thus the denominator of $\partial_y \ln J$ divides $\hbox{sf}(\hbox{den}(F))$, defined as $Q$ in step $2$. The degree of the numerator of $\partial_y \ln J$ is less than its denominator by construction, thus $\partial_y \ln J$ can be written $\frac{P}{Q}$ with $\deg P \leq \deg Q -1$. In \cite{cheze2020symbolic}, it is proved that a rational function $P/Q$ is the $y$ derivative of a Darbouxian first integral if and only if it satisfies the equation
$$D\left(\frac{P}{Q}\right)+A\partial_y\left(\frac{B}{A}\right) \frac{P}{Q}=0$$
This is the equation solved in step $2$, and thus step $2$ would return the equation for the Darbouxian integral $\ln J$.

Else $e^{\int g(z) dz}$ admits at least one essential singularity (as if all singularities were regular with rational exponent, then $e^{\int g(z) dz}$ would be algebraic, and this would be detected in step $3$). We can write
$$e^{\int g(z) dz}=K(z)^{1/k}e^{L(z)},\quad K,L\in\mathbb{C}(z),\; k\in \mathbb{N}^*$$
So let us differentiate the equality
$$\int^J(x,y) K(z)^{1/k}e^{L(z)} dz=\mathcal{F}(x,y)$$
$$\partial_y J(x,y) K(J(x,y))^{1/k}e^{L(J(x,y))}=\partial_y \mathcal{F}(x,y)=e^{F_0(x,y)}\prod F_i(x,y)^{\lambda_i}$$
Thus we have $\lambda_i\in \frac{1}{k}\mathbb{Z}$, and we can identify the exponential part $F_0(x,y)=L(J(x,y))$. Thus $F_0$ is also a rational first integral, and the poles of $F_0$ are the poles of the differential form $Gdx+Fdy$ with multiplicity at most one less, thus divide $Q$ as defined in step $1$. The pole along the line at infinity of $F_0$ is at most the order of the pole along the line at infinity of the form $Gdx+Fdy$ plus one, and thus we can write $F_0(J)=P/Q$ with $\deg P \leq \max(\deg F\tilde{Q} ,\deg G\tilde{Q})-\deg \hbox{sf}(\tilde{Q})+1$. Such rational first integral would be found in step $1$.

If everything before fails to find a simpler first integral, then none such exists. Step $5$ then returns ``None''. 
\end{proof}

Remark that the result returned by the algorithm \underline{\sf ReduceLiouvillian} can be a Darbouxian first integral even if a rational first integral exists. However, if it returns ``None'', then it is guaranteed that no simpler than Liouvillian first integral exists.

\begin{prop}
The equation returned by \underline{\sf ReduceLiouvillian} has always rational coefficients.
\end{prop}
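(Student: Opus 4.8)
The plan is a walk through the five exit points of \underline{\sf ReduceLiouvillian}. Step $5$ returns ``None'' and carries no equation, so nothing is to be checked there. For each of steps $1$, $2$ and $4$ I would observe that the object returned is assembled from the solutions of a system whose data already lie in $\mathbb{Q}(x,y)$ — because $F,G\in\mathbb{Q}(x,y)$ — and then invoke the elementary fact that such a system admits a $\mathbb{Q}$-rational structure: a homogeneous linear system over $\mathbb{Q}$, or an integrable connection over $\mathbb{Q}(x,y)$ in the sense of \cite{barkatou2012computing}, has a solution basis with coefficients in $\mathbb{Q}$ (resp. in $\mathbb{Q}(x,y)$) that remains a basis after scalar extension to $\overline{\mathbb{Q}}$. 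In particular the $\mathbb{Q}$-search performed by the algorithm finds a solution exactly when one exists over $\overline{\mathbb{Q}}$, so the $\mathbb{Q}$-restriction is harmless, and the returned expression has rational coefficients.

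Concretely, in steps $1$ and $2$ the polynomials $\tilde Q$, $Q$ and the coefficients of the linear PDEs $D(P/Q)=0$ and $D(P/Q)+A\partial_y(B/A)\,P/Q=0$ are rational; writing $P$ with undetermined coefficients of the prescribed degree and clearing denominators turns each condition into a homogeneous $\mathbb{Q}$-linear system in those coefficients, whence the returned $P/Q$ lies in $\mathbb{Q}(x,y)$ (and the auxiliary constant solutions, forming a $\mathbb{Q}$-subspace, are avoided over $\mathbb{Q}$ iff over $\overline{\mathbb{Q}}$). In step $4$ I would recall from the proof of Theorem \ref{thm1} that, once step $3$ has failed, the space of rational solutions of the connection $A\partial_xW+B\partial_yW+(AG+BF)W=0,\ \partial_y^2W+F\partial_yW+W\partial_yF=0$ is at most one-dimensional; this connection has coefficients in $\mathbb{Q}(x,y)$, so that one-dimensional space is defined over $\mathbb{Q}$ and has a generator $W\in\mathbb{Q}(x,y)$, and then the returned $F+\partial_yW/W$ lies in $\mathbb{Q}(x,y)$.

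Step $3$ is where I expect the only real care to be needed. With $\epsilon$ instantiated to a suitably generic rational number, as in the proof of Theorem \ref{thm1}, one has $\tilde F=F+\epsilon G\in\mathbb{Q}(x,y)$, so $\hbox{num}(\tilde F)$, $\hbox{den}(\tilde F)$ and $\partial_y\hbox{den}(\tilde F)+\epsilon\partial_x\hbox{den}(\tilde F)$ all lie in $\mathbb{Q}[x,y]$, and hence the resultant $S=\hbox{resultant}(\hbox{num}(\tilde F)-\lambda(\partial_y\hbox{den}(\tilde F)+\epsilon\partial_x\hbox{den}(\tilde F)),\hbox{den}(\tilde F),y)$ is a polynomial in $\lambda$ with coefficients in $\mathbb{Q}$. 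The algorithm reaches its return statement only when every root $\lambda$ of $S$ is rational; then $k$, the lcm of their denominators, is a genuine positive integer and each exponent $k\lambda$ is an integer. For a rational $\lambda$ the polynomial $\hbox{num}(\tilde F)-\lambda(\partial_y\hbox{den}(\tilde F)+\epsilon\partial_x\hbox{den}(\tilde F))$ has rational coefficients, so its $\hbox{gcd}$ with $\hbox{den}(\tilde F)$, once normalised canonically (say, made monic for a fixed monomial order), is a well-defined element of $\mathbb{Q}[x,y]$. Consequently the product over $\lambda\in S^{-1}(0)$ of these gcd's raised to the integer powers $k\lambda$ is some $R\in\mathbb{Q}(x,y)$, and the returned equation $\partial_y\mathcal{F}-R^{1/k}=0$ defines the $k$-Darbouxian first integral whose associated rational function $(\partial_y\mathcal{F})^k=R$ lies in $\mathbb{Q}(x,y)$ — which is the meaning of ``rational coefficients'' for the Darbouxian class.

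The only obstacle I foresee is therefore the bookkeeping in step $3$: verifying that each $\hbox{gcd}$ stays over $\mathbb{Q}$ (which hinges entirely on the algorithm's guard that all residues $\lambda$ be rational) and that what is displayed is a $k$-th root of an element already in $\mathbb{Q}(x,y)$. Combined with the dimension-at-most-one input borrowed from the proof of Theorem \ref{thm1} for step $4$, the remaining verifications for steps $1$, $2$ and $5$ are routine.
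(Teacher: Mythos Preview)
Your proposal is correct and follows essentially the same step-by-step approach as the paper's own proof: for steps $1$, $2$ and $4$ you use that linear systems (resp.\ integrable connections) with $\mathbb{Q}$-coefficients admit a $\mathbb{Q}$-rational solution basis, and for step $3$ you use that the rationality of all residues $\lambda$ forces each gcd to lie in $\mathbb{Q}[x,y]$ and each exponent $k\lambda$ to be an integer. The additional remarks you make (the at-most-one-dimensionality in step $4$, the explicit instantiation of $\epsilon$) are harmless elaborations but not needed for the argument.
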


\begin{proof}
In the first and second step, a linear system with coefficients in $\mathbb{Q}$ is solved, thus a basis of solutions with coefficients in $\mathbb{Q}$ exists, and one of them is returned. In the third step, the rational function in the differential equation returned is
$$\prod\limits_{\lambda\in S^{-1}(0)} \hbox{gcd}(\hbox{num}(\tilde{F})-\lambda (\partial_y \hbox{den}(\tilde{F})+\epsilon \partial_x \hbox{den}(\tilde{F})), \hbox{den}(\tilde{F}))^{k\lambda}.$$
The polynomial $S$ has only rational solutions, thus the gcd holds over polynomials with rational coefficients, and thus are with rational coefficients. The exponents $k\lambda$ are integers by construction, and thus this is a rational function with rational coefficients. In the fourth step, an integrable connection is solved in rational functions. The coefficients of this integrable connection have rational coefficients, and then there always exists a basis of rational solutions with rational coefficients \cite{barkatou2012computing}. Thus the one returned has rational coefficients. 
\end{proof}

\section{Reduction of Riccati first integrals}

Recall that a Riccati first integral $\mathcal{F}$ is defined up to Moebius transformation as the quotient $\mathcal{F}_1/\mathcal{F}_2$ of a $\mathbb{C}(x)$ basis of solutions of a differential equation of the form
\begin{equation}\label{eq2}
\partial_y^2 \mathcal{F}_i-F\mathcal{F}_i=0
\end{equation}
The condition of $\mathcal{F}$ being a first integral adds in fact additional conditions for the differentiation in $x$. In \cite{cheze2020symbolic}, it is proven that completing this equation to give the system of PDEs
\begin{equation}\label{eq2b}
\partial_y^2 \mathcal{F}_i-F\mathcal{F}_i=0,\quad D(\mathcal{F}_i)=\frac{1}{2} A \partial_y \left( \frac{B}{A} \right)\mathcal{F}_i
\end{equation}
gives a $\mathbb{C}$ basis of solutions of dimension $2$, thus defining up to Moebius transformation the quotient of two solutions. The first step will consist to check if $\mathcal{F}$ itself is Liouvillian. For this we will use a variation of the Kovacic algorithm \cite{kovacic1986algorithm}, following \cite{ulmer1996note}. We will need to compute symmetric powers of \eqref{eq2b}. The $m$th symmetric power of \eqref{eq2b} is a PDE system whose solutions are the $m$th power of solutions of \eqref{eq2b}. In particular, they are solutions of the $m$th symmetric power of the operator $\partial_y^2-F$ where $x$ is seen as a parameter. To complete this equation for the dependence in $x$ of the solutions, we simply remark that 
$$ D(\mathcal{F}_i^m)=\frac{m}{2} A \partial_y \left( \frac{B}{A} \right)\mathcal{F}_i^m$$
and thus this $m$th symmetric power of \eqref{eq2b} is the system
\begin{equation}\label{eq2c}
(\partial_y^2 -F)^{\otimes m} \mathcal{F}_i=0,\quad D(\mathcal{F}_i)=\frac{m}{2} A \partial_y \left( \frac{B}{A} \right)\mathcal{F}_i
\end{equation}
Such system can be written as an integrable connection, i.e. writing $V=(\mathcal{F}_i,\dots, \partial_y^{m-1} \mathcal{F}_i)^\intercal$ and $\partial_y V,\partial_x V$ satisfy the equation system
$$\partial_y V= M_1 V,\; \partial_x V= M_2 V.$$
Then the package IntegrableConnections \cite{barkatou2012computing} can compute the rational and hyperexponential solutions. Remark that the current implementation of this package is slow and faulty for even the simplest examples. Thus for the practical implementation, for rational solutions we first look for a $\mathbb{C}(x)$ basis of rational solutions of $(\partial_y^2 -F)^{\otimes m}$, and then look for rational coefficients in $x$ by writing a system and solving it through LinearFunctionalSystems package of Maple. For hyperexponential solutions, going from a $\mathbb{C}(x)$ basis of hyperexponential solutions of $\partial_y^2 -F$ to a basis of hyperexponential solutions of \eqref{eq2b} can be troublesome. Thus we first make a generic affine transformation on the variables before looking for hyperexponential solutions. On a generic straight line there is a one to one correspondence between hyperexponential solutions on the line and on the whole $x,y$ plane, allowing to transform the system to a single equation with two parameters ($x$ and the slope of the straight line), and solutions are found through expsols of DETools package.\\

More generally we will also need to check if $\mathcal{F}$ is a function (not homographic) of another first integral of a lower class, i.e.
$$\mathcal{F}(x,y)=f(J(x,y)),\quad f \hbox{ one variable function}$$
We will use the Schwarzian derivative
$$S_y(\mathcal{F})=\frac{\partial_y^3 \mathcal{F}}{\partial_y \mathcal{F}}-\frac{3}{2} \left( \frac{\partial_y^2\mathcal{F}}{\partial_y\mathcal{F}}\right)^2$$
When $\mathcal{F}=\mathcal{F}_1/\mathcal{F}_2$ with $\partial_y^2 \mathcal{F}_i-F\mathcal{F}_i=0$, calculations give that
$$S_y(\mathcal{F})=-2F$$
and this gives an alternative definition of $\mathcal{F}$ in function of $F$ (see \cite{casale2004groupoide}).\\

\noindent\underline{\sf ReduceRiccati}\\
\textsf{Input:} Vector field $X=(A,B),\; A,B\in\mathbb{Q}[x,y],\; A\wedge B=1$, and $F\in\mathbb{Q}(x,y)$ corresponding to a Riccati first integral of $X$.\\ 
\textsf{Output:} An equation defining a rational, $k$-Darbouxian or Liouvillian first integral, or ``None''.
\begin{enumerate}
\item Search a non zero rational solution $R$ of \eqref{eq2b}. If one, return $\partial_y \mathcal{F} -R^{-2}$.
\item Search a non zero rational solution $R$ of the symmetric square of \eqref{eq2b}. If one, compute the special polynomial $P(u)=u^2+b_1u+b_0$ attached to it.
\begin{enumerate}
\item If $\Delta(P)=0$ return $\partial_y^2 \mathcal{F}-b_1 \partial_y \mathcal{F}$
\item If $\Delta(P)\neq 0$ return $\partial_y \mathcal{F}-\sqrt{\Delta(P)/c}$ where $c$ is the dominant coefficient of $\Delta(P)$ for some monomial order.
\end{enumerate}
\item Search a hyperexponential solution $R$ of \eqref{eq2b}. If one, return $\partial_y^2 \mathcal{F} +2\frac{\partial_y R}{R} \partial_y \mathcal{F}$.
\item Search a non zero rational solution $R$ of the fourth symmetric power of \eqref{eq2b}. If exactly one, compute the special polynomial $P(u)^2=(u^2+b_1u+b_0)^2$ attached to it. Return $\partial_y \mathcal{F}-\sqrt{\Delta(P)}$
\item For $m\in\{6,8,12\}$, search a non zero rational solution $R$ of the $m$th symmetric power of \eqref{eq2b}. If one, note $\partial_y^2+a_1\partial_y +a_0= (\partial_y^2-F) \otimes \left(\partial_y +\frac{\partial_y R}{mR} \right)$ and return $\mathcal{F}-a_0^{-1}(2a_1+\frac{\partial_y a_0}{a_0})^2$.
\item Note $Q=\hbox{sf}(\hbox{den}(F))$. Solve the linear system
$$D\left(\frac{P}{Q}\right)+A\partial_y\left(\frac{B}{A}\right) \frac{P}{Q}=0$$
for $P\in\mathbb{Q}[x,y],\; \deg P \leq \deg Q-1$. If a non zero solution exists, return $\partial_y \mathcal{F}-\frac{P}{Q}=0$.
\item Note $Q=\hbox{sf}(\hbox{den}(F)A)$. Look for rational first integrals $R$ of $X$ of numerator and denominator degree $\leq \max(6(\deg Q-1),\deg \hbox{num}(F)+2,\deg \hbox{den}(F)+1)$. If one $R$, return $\mathcal{F}-R$.
\item Return ``None''.
\end{enumerate}

Let us recall some definitions about linear differential equation of order $2$ (see \cite{van2012galois})
\begin{defi}
A differential equation $f''(z)+a(z)f'(z)+b(z)$ with $a,b\in\mathbb{C}(z)$ has a singularity at $z_0$ when $z_0$ is a pole of $a$ or $b$. It has a singularity at infinity if after variable change $z\rightarrow 1/z$, it has a singularity at $0$.\\
A singularity $z_0$ is said to be regular if there exists a basis of series solutions of the form  $(z-z_0)^\alpha \ln (z-z_0)^r \mathbb{C}[[z-z_0]]$ with $\alpha\in\mathbb{C}, r\in\mathbb{N}$. Else it is said to be irregular.\\
The numbers $\alpha$ of the basis are called the exponents.\\
A singularity $z_0$ is said to be meromorphic if there exists a basis of solutions of Laurent series.\\
A singularity $z_0$ is said to be algebraic if there exists a basis of solutions of Puiseux series.\\
\end{defi}

\begin{proof}[Proof of Theorem \ref{thm2}]
If $\mathcal{F}$ is itself Liouvillian, then obviously the system admits a Liouvillian first integral. Let us first prove that if $\mathcal{F}$ is Liouvillian, then steps $1-5$ will return an equation for a first integral of a lower class than Riccati.

\begin{lem}\label{lem0}
If $\mathcal{F}$ is Liouvillian, then an equation for a Liouvillian, Darbouxian or rational first integral is returned in \underline{\sf ReduceRiccati} in steps $1-5$.
\end{lem}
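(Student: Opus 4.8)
My plan is to reduce Lemma \ref{lem0} to the reducible case of the Kovacic classification for the connection \eqref{eq2b}, and then to verify, step by step, that each equation written down by \underline{\sf ReduceRiccati} genuinely defines a first integral of the announced class.

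First I would show that Liouvillianity of $\mathcal{F}$ forces \eqref{eq2b} to possess a hyperexponential solution. Write $\mathcal{R}=\partial_y\mathcal{F}$. Since $\mathcal{F}$ is Liouvillian there is $\tilde F\in\mathbb{Q}(x,y)$ with $\partial_y^2\mathcal{F}=\tilde F\,\partial_y\mathcal{F}$, so $\partial_y\ln\mathcal{R}=\tilde F$; and differentiating $D(\mathcal{F})=0$, i.e. $\partial_x\mathcal{F}=-\tfrac BA\,\partial_y\mathcal{F}$, with respect to $y$ yields $\partial_x\ln\mathcal{R}=-\partial_y(\tfrac BA)-\tfrac BA\,\tilde F$, also rational, so $\mathcal{R}$ is hyperexponential. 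On the other hand $S_y(\mathcal{F})=-2F$ gives $\partial_y^2(\mathcal{R}^{-1/2})=F\,\mathcal{R}^{-1/2}$, while $D(\mathcal{F})=0$ gives $D(\mathcal{R})=-A\,\partial_y(\tfrac BA)\,\mathcal{R}$ and hence $D(\mathcal{R}^{-1/2})=\tfrac12 A\,\partial_y(\tfrac BA)\,\mathcal{R}^{-1/2}$; so $\mathcal{R}^{-1/2}$ is a solution of the whole system \eqref{eq2b}, and it is hyperexponential. Thus \eqref{eq2b} falls in the reducible (first) case of Kovacic's algorithm, and the imprimitive and primitive-finite cases cannot occur for a Liouvillian $\mathcal{F}$.

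Next I would record the dictionary between solutions of \eqref{eq2b} and integrating factors of $X$: for any solution $R$ of \eqref{eq2b} the same computation gives $D(R^{-2})=-A\,\partial_y(\tfrac BA)\,R^{-2}$, so the $1$-form $R^{-2}\,(dy-\tfrac BA\,dx)$ is closed and integrates to a non-constant first integral $\mathcal{G}$ with $\partial_y\mathcal{G}=R^{-2}$ and $\partial_y^2\mathcal{G}/\partial_y\mathcal{G}=-2\,\partial_y\ln R$. Hence if step $1$ finds a rational $R$, the returned equation $\partial_y\mathcal{F}-R^{-2}=0$ defines a $1$-Darbouxian first integral; and if steps $1$--$2$ fail and step $3$ finds a hyperexponential $R$ --- one exists by the previous paragraph, and the search is over the two-variable connection owing to the generic-line reduction described before the algorithm --- the returned $\partial_y^2\mathcal{F}+2\tfrac{\partial_y R}{R}\,\partial_y\mathcal{F}=0$ has rational coefficient and defines a Liouvillian first integral. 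For the intermediate step $2$, if the symmetric square of \eqref{eq2b} has a rational solution $R$, I would attach the polynomial $P(u)=u^2+b_1u+b_0$ of \cite{kovacic1986algorithm,ulmer1996note}, whose roots are the logarithmic derivatives of a pair of solutions of \eqref{eq2} with product $R$. If $\Delta(P)=0$ these coincide, $R_0=\sqrt R$ is then a hyperexponential solution of \eqref{eq2b} with $\partial_y\ln R_0=-b_1/2$, and the returned $\partial_y^2\mathcal{F}-b_1\partial_y\mathcal{F}=0$ is the equation of the first integral attached to $R_0^{-2}=R^{-1}$. If $\Delta(P)\neq0$, a direct computation from the symmetric-square equation gives $D(\Delta(P))=-2A\,\partial_y(\tfrac BA)\,\Delta(P)$, so $\sqrt{\Delta(P)/c}$ is an integrating factor and the returned $\partial_y\mathcal{F}-\sqrt{\Delta(P)/c}=0$ defines a $2$-Darbouxian first integral. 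Since a hyperexponential solution of \eqref{eq2b} exists, one of steps $1$, $2$, $3$ must return (step $3$ certainly does if $1$ and $2$ do not), and its output is a valid equation for a Liouvillian, Darbouxian or rational first integral; a fortiori this occurs within steps $1$--$5$.

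The point I expect to be the main obstacle is the bookkeeping in step $2$: making the ``special polynomial'' construction precise --- identifying $b_0,b_1$ from the rational symmetric-square solution under the normalization of symmetric powers in \eqref{eq2c} --- and establishing the clean dichotomy above, in particular the identity $D(\Delta(P))=-2A\,\partial_y(\tfrac BA)\,\Delta(P)$ (equivalently, that $\sqrt{\Delta(P)}$ differs by a constant factor from a genuine integrating factor of $X$). A minor further point is checking, via the generic-straight-line transfer, that the hyperexponential solution found by step $3$ for the restricted equation lifts to a hyperexponential solution of \eqref{eq2b} itself --- which is in any case guaranteed since $\mathcal{R}^{-1/2}$ is one.
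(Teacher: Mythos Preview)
Your argument is correct and in fact sharper than the paper's. The paper proves Lemma~\ref{lem0} by first observing that Liouvillianity of $\mathcal{F}=\mathcal{F}_1/\mathcal{F}_2$ forces $\mathcal{F}_1,\mathcal{F}_2$ to be Liouvillian, hence the differential Galois group of \eqref{eq2b} is virtually solvable, and then walks through \emph{all five} Kovacic cases (reducible, dihedral, and the three finite primitive groups), verifying in each that the corresponding step returns a valid lower-class equation; the conclusion is by exhaustion of the virtually solvable groups. You instead exhibit the explicit hyperexponential solution $(\partial_y\mathcal{F})^{-1/2}$ of the full connection \eqref{eq2b}, which shows directly that the Galois group is \emph{reducible}, so the algorithm must terminate in steps $1$--$3$ and steps $4$--$5$ are never reached for a Liouvillian $\mathcal{F}$.

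What this buys: your route avoids the case analysis of the imprimitive and finite-primitive groups entirely, and the ``dictionary'' $R\mapsto R^{-2}$ you set up makes the verification of each returned equation uniform. The paper's longer route has the side benefit of documenting, independently of the Liouvillian hypothesis, that the outputs of steps $4$--$5$ are genuine first integrals --- information it tacitly reuses later in the proof of Theorem~\ref{thm2} when equation~\eqref{eq3} has dihedral or finite monodromy --- but for Lemma~\ref{lem0} as stated your argument is complete. Your flagged ``main obstacle'' (the identity $D(\Delta(P))=-2A\,\partial_y(B/A)\,\Delta(P)$ in step~2) is exactly the condition from \cite{cheze2020symbolic} for $\sqrt{\Delta(P)}$ to be the $y$-derivative of a Darbouxian first integral; it follows from the fact that the two roots of $P$ give hyperexponential solutions $H_\pm$ of \eqref{eq2b} with $(H_+/H_-)^2$ satisfying the integrating-factor equation, and $\partial_y\ln(H_+/H_-)=\sqrt{\Delta(P)}$.
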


\begin{proof}
If $\mathcal{F}$ is Liouvillian, then we have the relation $\mathcal{F}_2=\mathcal{F} \mathcal{F}_1$, which inserted in the formula of the Wronskian (which is equal to $1$ here) gives a first order differential equation for $\mathcal{F}_1$ with Liouvillian coefficients. Thus $\mathcal{F}_1$ and then $\mathcal{F}_2$ are Liouvillian, and thus the Galois group of \eqref{eq2b} is virtually solvable.

We now follow the algorithm of \cite{ulmer1996note}. Step $1$ looks for rational solutions $R$. If one is found, then the other one is $R\int R^{-2} dy$ which gives a Darbouxian first integral $\int R^{-2} dy$. Step $2$ looks for rational solutions of the symmetric square of \eqref{eq2b}. Note $P(u)$ the special polynomial. Any roots of $P(u)$ are logarithmic derivatives of solutions of \eqref{eq2b}. If $\Delta=0$, this gives one hyperexponential solution $H$, and the other solution can be written $H \int H^{-2} dy$, which gives a Liouvillian first integral $\int H^{-2} dy$. If $P$ has two distinct roots, this gives two solutions of the form $\exp(\frac{1}{2} \int T\pm \sqrt{\Delta})$. Their quotient gives a first integral $\exp(\int \sqrt{\Delta})$ and thus a $2$-Darbouxian first integral exists, given by equation $\partial \mathcal{F}-\sqrt{\Delta/c}$.

Step $3$ looks for a hyperexponential solution $H$, and as in step $2$ if one is found, we have a Liouvillian first integral  $\int H^{-2} dy$. Step $4$ looks for the dihedral Galois group case. If a solution $R$ is found of the fourth symmetric power, then building the special polynomial $P(u)^2=(u^2+b_1u+b_0)^2$, the system admits solutions of the form
$$e^{\int -b_1+\sqrt{\Delta} dy},  e^{\int -b_1-\sqrt{\Delta} dy}$$
and thus the quotient of them gives the formula for $\mathcal{F}=\exp \int 2\sqrt{\Delta} dy$. Thus $\int \sqrt{\Delta} dy$ is a $2$-Darbouxian first integral, and it is returned in step $4$. Step $5$ looks for finite primitive Galois groups cases. For this part, we follow \cite{van2005solving} instead. If a solution $R$ is found, then a basis of solutions of the form
$$R^{1/m} \mathcal{H}_1(f),R^{1/m} \mathcal{H}_2(f)$$
exists with
\begin{itemize}
\item $m=6$, $f=\sqrt{1+\frac{64}{5} \frac{a_0}{g^2}}$
\item $m=8$, $f=-\frac{7}{144} \frac{g^2}{a_0}$
\item $m=12$,$f=\frac{11}{400} \frac{g^2}{a_0}$
\end{itemize}
$g=2a_1+\frac{\partial_y a_0}{a_0}$ and $\mathcal{H}_1,\mathcal{H}_2$ form a basis of one of the three standard equations (see \cite{van2005solving}). When making the quotient of these two solutions, we obtain that $\frac{\mathcal{H}_1}{\mathcal{H}_2}(f)$ is a first integral, and thus so is $f$. Looking at the formulas, they all are functions of $g^2/a_0$, and thus $g^2/a_0$ is in any case a rational first integral. This is the expression returned in step $5$. 
If none of these steps returned a lower class first integral, then the Galois group of \eqref{eq2b} is $SL_2(\mathbb{C})$, thus not virtually solvable.
\end{proof}

Assume $X$ admits another first integral $J$, which can be Liouvillian, Darbouxian or rational. We can then write
$$\mathcal{F}(x,y)=f(J(x,y)),\quad f \hbox{ one variable function}$$
We apply the Schwarzian derivative both sides giving
$$-2F(x,y)=(\partial_y J(x,y))^2 \left(\frac{f'''}{f''}-\frac{3}{2}\left(\frac{f''}{f'}\right)^2\right)(J(x,y))+
\frac{\partial_y^3 J(x,y)}{\partial_y J(x,y)}-\frac{3}{2} \left( \frac{\partial_y^2J(x,y)}{\partial_yJ(x,y)}\right)^2$$
and thus we have
\begin{equation}\label{eq1}
\left(\frac{f'''}{f''}-\frac{3}{2}\left(\frac{f''}{f'}\right)^2\right)(J(x,y))=
-2\frac{F(x,y)}{(\partial_y J(x,y))^2}-\frac{\partial_y^3 J(x,y)}{(\partial_y J(x,y))^3}+ \frac{3(\partial_y^2 J(x,y))^2}{2(\partial_y J(x,y))^4}
\end{equation}
As $J$ is at most Liouvillian, then $\partial_y J$ is hyperexponential, and as the right hand side is homogeneous in $J$ of degree $-2$, the right hand side is a hyperexponential function we will note $U(x,y)$. Let us consider the equation $U(x,y)=h$. If $U$ is not constant, this defines a curve $\mathcal{C}_h$.

If $X$ does not admit a rational or Darbouxian first integral, then $U$ is not a first integral as it is the exponential of a Darbouxian function. Thus there exists $h$ such that $\mathcal{C}_h$ is not an orbit of $X$, and thus on which $J$ is not constant.  Thus restricting equation \eqref{eq1} on $\mathcal{C}_h$ gives that 
$$\left(\frac{f'''}{f''}-\frac{3}{2}\left(\frac{f''}{f'}\right)^2\right)(z)=h$$
at least on an open set of $z\in\mathbb{C}$.
If $X$ does not admit a rational first integral, but $J$ is Darbouxian then $U$ is algebraic, and thus $\mathcal{C}_h$ is an algebraic curve. As $X$ does not admit a rational first integral, it has at most finitely many Darboux polynomials, and thus there exists $h\in\mathbb{C}$ such that $J_{\mid \mathcal{C}_h}$ is not constant. Thus restricting equation \eqref{eq1} on $\mathcal{C}_h$ gives that 
$$\left(\frac{f'''}{f''}-\frac{3}{2}\left(\frac{f''}{f'}\right)^2\right)(z)=h$$
at least on an open set of $z\in\mathbb{C}$. Remark that if $U$ is constant, this equation also holds.

Solving this differential equation, we find solutions of the form
$$f(z)=a\tan(bz+c)+d,\quad f(z)=\frac{az+b}{cz+d}$$
Thus in both cases, $f(J(x,y))$ is a Liouvillian function. According to Lemma \ref{lem0}, this would be detected in steps $1-5$.

If $X$ admits a rational first integral $J$, then $U$ is rational and is a function of $J$. We can assume that $J$ is indecomposable, and then $U$ is a rational function of $J(x,y)$, and we note $U(x,y)=-2u(J(x,y))$. Thus $f$ satisfies the equation
$$\left(\frac{f'''}{f''}-\frac{3}{2}\left(\frac{f''}{f'}\right)^2\right)(z)=-2u(z)$$
This means that $f=f_1/f_2$ where $f_1,f_2$ form a basis of solutions of the equation
\begin{equation}\label{eq3}
f_i''(z)-u(z)f_i(z)=0
\end{equation}
Let us consider $z_0$ a non algebraic singularity of equation \eqref{eq3}. There exists a formal series expansion of the solutions $f_1,f_2$ near $z_0$ of the form
$$e^{\sum\limits_{i=1}^{n} \frac{a_i}{(z-z_0)^{i/p}}}(z-z_0)^{\alpha} \ln(z-z_0)^{\epsilon} (1+c_1(z-z_0)+c_2(z-z_0)^2+\dots)$$
with $n,p\in\mathbb{N},\alpha\in\mathbb{C},\epsilon\in\{0,1\},a,c\in\mathbb{C}$. Taking the quotient $f_1/f_2$ we can have
$$f(z)=e^{\sum\limits_{i=1}^{n} \frac{a_i}{(z-z_0)^{i/p}}}(z-z_0)^{\alpha} (1+c_1(z-z_0)+c_2(z-z_0)^2+\dots) \hbox{ or}$$
$$f(z)=e^{\sum\limits_{i=1}^{n} \frac{a_i}{(z-z_0)^{i/p}}}(z-z_0)^{\alpha}\ln(z-z_0) (1+c_1(z-z_0)+c_2(z-z_0)^2+\dots),\;\; \alpha\in\mathbb{N}$$
Remark that as the Wronskian of the system is constant, the log case can in fact appear only when $\alpha=0$ and all the $a_i=0$. Moreover, in the first case if the exponential part vanishes ($a_i=0$), then $\alpha$ cannot be rational as else $z_0$ would be an algebraic singularity.
Let us now look at the curve $\mathcal{C}$ given by an irreducible factor of $\hbox{num}(J(x,y)-z_0)$, and not a vertical line. Near $z_0$, $f(z)$ has either an essential (when the $a_i$ are not all zero), an infinitely ramified (when $a_i=0$ and $\alpha \notin \mathbb{Q}$), a logarithmic (when $\ln$ appear) singularity. Now composing with $J(x,y)$ and looking near $\mathcal{C}$, $f(J(x,y))$ will still have a singularity/ramification, and thus $\mathcal{C}$ will be a pole of $F$.

Let us consider $z_0$ an algebraic non meromorphic singularity of equation \eqref{eq3}. Then $f$ admits a series expansion of the form
$$f(z)=(z-z_0)^{\alpha} (1+c_1(z-z_0)+c_2(z-z_0)^2+\dots),\quad \alpha\in\mathbb{Q}/ \mathbb{Z}$$
Let us now look at the curve $\mathcal{C}$ given by an irreducible factor of $\hbox{numer}(J(x,y)-z_0)$, and note $m$ the multiplicity of this factor. The function $f(J(x,y))$ will have a singularity near $\mathcal{C}$ if $m$ is not a multiple of the denominator of $\alpha$.

Thus if equation \eqref{eq3} has a least $2$ non algebraic singularity, we can put them by Moebius transformation at $0,\infty$ and then the factors of the numerator and denominator of $J$ (except possibly those in $x$ only) will appear as poles of $F$. Thus $\partial_y \ln J$ has poles of order $1$ which are poles of $F$ (the factors depending on $x$ only then disappearing due to the $y$ derivation). This is a Darbouxian first integral whose defining function has a denominator dividing $\hbox{sf}(\hbox{den}(F))=Q$. As $\ln J$ is a linear combination of logs, its $y$ derivative can then be written $P/Q$ with $\deg P \leq \deg Q-1$. So step $6$ would detect the equation defining the Darbouxian first integral $\ln J$. 

If equation \eqref{eq3} has only one non algebraic singularity, then we can put it at infinity by a Moebius transformation. Let us first assume the singularity at infinity to be irregular. Then $u(z)$ in equation \eqref{eq3} has an asymptotic behaviour at infinity $u(z)=a z^r+o(z^r)$ with $r\geq -1$. We have the relation
\begin{equation}\label{eqirr}
-2F(x,y)=(\partial_y J(x,y))^2 (-2u(J(x,y))+
\left(\frac{\partial_y^3 J(x,y)}{\partial_y J(x,y)}-\frac{3}{2} \left( \frac{\partial_y^2J(x,y)}{\partial_yJ(x,y)}\right)^2\right).
\end{equation}
Consider an irreducible factor of the denominator of $J$ (not in $x$ only), and note its multiplicity $p$. The second part of the right hand side will have a pole of order at most $2$, and the first part has a pole of order $pr+2p+2$. As $r\geq -1$, this order is always strictly more than $2$, and thus the poles cannot compensate. Thus the multiplicity of the factor in the denominator of $J$ is less than the multiplicity of the factor in the denominator of $F$, and thus the denominator of $J$ divides the denominator of $F$.

There are possibly also factors in the denominator of $J$ depending on $x$ only. The second part of the right hand side will have no pole along such vertical straight line, and for the first part, noting $p$ the multiplicity, we have a pole of multiplicity $2p+pr=p(r+2)$. As $r\geq -1$, the multiplicity of such factor in the denominator of $J$ is less than the multiplicity of the factor in the denominator of $F$, and thus the denominator of $J$ divides the denominator of $F$.

We now look at the line at infinity. The second part of the right hand side is of degree at most $-2$. Noting $p=\deg \hbox{num}(J)-\deg\hbox{den}(J)$, the first term has a pole of order $2p-2+rp=p(r+2)-2$. Thus if $p\geq 1$, there cannot be compensation, and thus $p(r+2)-2=\deg \hbox{num}(F)-\deg \hbox{den}(F)$. Then $p\leq \max(1,\deg \hbox{num}(F)-\deg \hbox{den}(F)+2)$. Thus the rational first integral $J$ has a numerator and denominator degree $\leq \max(\deg \hbox{num}(F)+2,\deg \hbox{den}(F)+1)$. This rational first integral would be detected in Step $7$.\\

We now assume that the singularity at infinity is regular. As it is non algebraic, it can be an irrational exponent difference or a log. In both cases, the factors of the denominator of $J$ will be factors of the denominator of $F$, or if it is a factor in $x$ only, a factor of numerator of $A$ (these are the new singularities possibly coming from the second equation of \eqref{eq2b}). So noting $Q=\hbox{sf}(\hbox{den}(F)A)$, we can write $J=H/Q^n$ for some $n\in\mathbb{N}, H\in\mathbb{Q}[x,y]$. Let us note $z_1,\dots,z_p$ the algebraic non meromorphic singularities, and $m_i$ their ramification index. Let us note $k$ the lcm of the $m_i$. The factors of $\hbox{num}(J-z_i)$ are either with multiplicity multiple of $m_i$ or have to be factors of $Q$. Thus the factors of $\hbox{num}(J-z_i)^{k/m_i}$ are either with multiplicity multiple of $k$ or have to be factors of $Q$. Making a product over the $i$, we obtain
$$\prod_{i=1}^p \hbox{num}(J-z_i)^{k/m_i} =V^k M$$
where $V\in\mathbb{Q}(x,y)$ and $M$ is a product of factors of $Q$ (with unknown multiplicity). Now diving this relation by a suitable power of $\hbox{den}(J)$ we have
$$\prod_{i=1}^p (J-z_i)^{k/m_i} =V^k M$$
with still $V\in\mathbb{Q}(x,y)$ and $M$ is still a product of factors of $Q$ (possibly with negative exponents). The rational function $M$ can be assumed to be polynomial and to have factors with multiplicities $\leq k-1$ by changing $V$.

\begin{lem}\label{lem1}
Let be $P\in\mathbb{Q}[z]$ a polynomial with roots of multiplicity $<k\in \mathbb{N}/\{0,1\}$, $S\in\mathbb{Q}[x,y]$ a square free polynomial and $M\in\mathbb{Q}[x,y]$ dividing a power of $S$. Consider the equation $P(J)=V^k M$ with unknowns $J,V\in \mathbb{Q}[x,y,S^{-1}]$. When either
\begin{itemize}
\item $k\geq 3 \hbox{ odd},\sharp P^{-1}(0)\geq 2$
\item $k\geq 3 \hbox{ even},\sharp P^{-1}(0)\geq 2, P$ is not a $k/2$-th power of a polynomial
\item  $k=2, \sharp P^{-1}(0)\geq 3$
\end{itemize}
then we have
$$\max(\deg \hbox{num}(J),\deg \hbox{den}(J)) \leq 6\max(0,\deg S-1) $$
\end{lem}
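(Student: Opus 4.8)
The plan is to reduce the statement to a one--variable problem by restricting everything to a generic line, and then to run a Riemann--Hurwitz count for the rational map $J$. The identity $P(J)=V^kM$ forces, away from $\{S=0\}$, that $J$ ramify heavily over the roots of $P$, and this ramification bounds $\deg J$. If $J$ is constant there is nothing to prove, so assume $J$ non--constant and write $P=c\prod_{i=1}^{p}(z-z_i)^{e_i}$ over $\overline{\mathbb Q}$ with the $z_i$ pairwise distinct, $p=\sharp P^{-1}(0)$ and $1\le e_i\le k-1$; put $k_i=k/\gcd(k,e_i)$, so that $k_i\ge 2$ for every $i$. Choose a generic affine line $\ell$ and restrict $S,M,J,V$ to it, identifying its projective closure with $\mathbb P^1$ and writing $\infty_\ell$ for its point at infinity. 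For $\ell$ generic one keeps: $J|_\ell\colon\mathbb P^1\to\mathbb P^1$ has degree $d:=\max(\deg\hbox{num}(J),\deg\hbox{den}(J))$; $S|_\ell$ is squarefree of degree $\deg S$, so its roots form a set $T$ of at most $\deg S$ points, none equal to $\infty_\ell$; and, since $J,V\in\mathbb Q[x,y,S^{-1}]$ while $M$ divides a power of $S$, the polar loci of $J|_\ell,V|_\ell$ and the zero set of the polynomial $M|_\ell$ all lie inside $T\cup\{\infty_\ell\}$.

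Next, taking divisors in $P(J)=V^kM$ on $\mathbb P^1$ gives $\hbox{div}(P(J))=k\,\hbox{div}(V)+\hbox{div}(M)$, and the support of the right--hand side modulo $k$ lies in $T\cup\{\infty_\ell\}$; hence for $q\notin T\cup\{\infty_\ell\}$ we have $\hbox{ord}_q P(J)\equiv 0\pmod k$, and if moreover $q\in J^{-1}(z_i)$ then $\hbox{ord}_q P(J)=e_i r_q$ with $r_q$ the ramification index of $J$ at $q$, so $k_i\mid r_q$. Now apply Riemann--Hurwitz, $\sum_q(r_q-1)=2d-2$, keeping only the pairwise disjoint fibres $J^{-1}(z_1),\dots,J^{-1}(z_p),J^{-1}(\infty)$. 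Over $z_i$, points outside $T\cup\{\infty_\ell\}$ have $r_q\ge k_i$, so that fibre contributes at least $d(1-1/k_i)-|B_i|$, where $B_i=J^{-1}(z_i)\cap(T\cup\{\infty_\ell\})$; over $\infty$ it contributes $d-\sharp J^{-1}(\infty)$, and $J^{-1}(\infty)\subset T\cup\{\infty_\ell\}$. Since $B_1,\dots,B_p,J^{-1}(\infty)$ are disjoint subsets of the $(\le\deg S+1)$--point set $T\cup\{\infty_\ell\}$, summing the Riemann--Hurwitz contributions gives
$$2d-2\ \ge\ d\Big(p+1-\sum_{i=1}^{p}\tfrac1{k_i}\Big)-(\deg S+1),\qquad\hbox{that is}\qquad d\Big(p-1-\sum_{i=1}^{p}\tfrac1{k_i}\Big)\le \deg S-1.$$

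It then remains to bound $\lambda:=p-1-\sum_{i=1}^{p}1/k_i$ from below by $1/6$ in each of the three cases, which yields $d\le 6(\deg S-1)$ and hence $d\le 6\max(0,\deg S-1)$ (when $\deg S\le 1$ the displayed inequality together with $\lambda>0$ forces $d=0$). If $k$ is odd, each $k_i$ divides the odd number $k$ and is $>1$, hence $k_i\ge 3$, so $\lambda\ge 2p/3-1\ge 1/3$ as $p\ge 2$. If $k$ is even, then $k_i=2$ exactly when $e_i=k/2$; the hypothesis that $P$ is not a $(k/2)$-th power excludes $e_1=\dots=e_p=k/2$, so $k_j\ge 3$ for some $j$, and then for $p=2$ one gets $\lambda\ge 1-\tfrac12-\tfrac13=\tfrac16$ — the extremal configuration $\{k_1,k_2\}=\{2,3\}$, which is exactly where the constant $6$ is needed — while for $p\ge 3$ one gets $\lambda\ge\tfrac{p-1}2-\tfrac13\ge\tfrac23$. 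If $k=2$, then $p\ge 3$, every $e_i=1$, every $k_i=2$, and $\lambda=p/2-1\ge 1/2$.

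The step I expect to be most delicate is the Riemann--Hurwitz accounting: one must make sure the ``boundary'' points (the roots of $S$ on $\ell$ and the point $\infty_\ell$) are never double--counted across the several fibres, and that the constant that comes out is exactly $1/6$. This is the place where the classical fact that the superelliptic curve $w^k=P(z)$ has positive genus under the stated hypotheses is being re--proved in a form stable under the perturbation by $M$; passing to that curve directly is awkward precisely because of $M$, which is why I would argue via ramification of $J$ itself. A secondary point is checking genericity of $\ell$: it must avoid the finitely many lines that drop $\deg(S|_\ell)$, destroy squarefreeness of $S|_\ell$, introduce a common factor between $\hbox{num}(J)|_\ell$ and $\hbox{den}(J)|_\ell$, or make $J|_\ell$ constant.
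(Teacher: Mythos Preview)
Your proof is correct, and in one respect cleaner than the paper's: you handle all three cases uniformly with a single Riemann--Hurwitz count on $J|_\ell:\mathbb P^1\to\mathbb P^1$, whereas the paper splits into two substantially different arguments. For $k\ge 3$ the paper applies Mason's function-field $abc$ inequality to the identity $(J-z_1)+(z_2-J)+(z_1-z_2)=0$, bounding the number of distinct zeros of $J-z_1$ and $J-z_2$ outside $S^{-1}(0)$ by $d/3$ and $d/2$ respectively (whence the loss of $5/6$ and the constant $6$). For $k=2$ it passes to the degree-$8$ extension $\mathbb C(y)(x,\sqrt{J-z_1},\sqrt{J-z_2},\sqrt{J-z_3})$ and combines four Galois-conjugate Mason inequalities in the style of Hindry--Silverman, actually obtaining the sharper constant $4$. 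Your argument avoids the field extension by bringing the fibre $J^{-1}(\infty)$ into the count---this is the key extra input, available precisely because $J\in\mathbb Q[x,y,S^{-1}]$ forces that fibre into $T\cup\{\infty_\ell\}$---and then the case analysis on $\lambda=p-1-\sum 1/k_i$ replaces the separate $k=2$ treatment. Both routes are the same Riemann--Hurwitz underneath (Mason's inequality is its genus-$0$ avatar), but yours makes the origin of the constant $6$, namely the extremal $(2,3,\infty)$ configuration, more transparent; the paper's buys the sharper $4$ in the hyperelliptic case.
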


\begin{proof}[Proof of Lemma \ref{lem1}]
We first consider the case $k\geq 3$. From now we will consider $x$ as a variable and $y$ as a parameter. Let us note $z_1,z_2$ two distinct roots of $P$ with multiplicity $m_1,m_2$ and such that $m_1\wedge k \neq k/2$ when $k$ even. We can always find such two points as $\sharp P^{-1}(0)\geq 2$ and when $k$ is even, $P$ is not a $k/2$-th power of a polynomial, and thus all roots are not of multiplicity $k/2$.
 and write the equality
$$(J-z_1)+(-J+z_2)+(z_1-z_2)=0$$
The three terms on this sum have the same valuations (as function in $x$) except at the zeros of $J-z_1$, $J-z_2$, $S$ and the point infinity. Let us note $v$ the number of such points. We will now apply the fundamental inequality of Mason \cite{mason1984diophantine} which gives (the field is $\mathbb{C}(y)(x)$ and thus of genus $0$)
\begin{equation}\label{eq4}
H\left( \frac{J-z_1}{z_1-z_2}\right) \leq \max(0,v-2)
\end{equation}
where the height of a rational function is given by $H(f)=\sum_{z\in\mathbb{P}^1} \min(0,\hbox{val}_z(f))$.\\
Factoring the equation $P(J)=V^k M$, we have
$$(J-z_1)^{m_1}(J-z_2)^{m_2} \prod\limits_{i=3}^{\sharp P^{-1}(0)} (J-z_i)^{m_i}=MV^k$$
Two different levels of $J$ cannot have a common level, and thus  $J-z_i$ admits factors either of $S$ (recall that $M$ has factors of $S$ also), either they are multiple of $k/(m_i \wedge k)$. As $m_1\wedge k \neq k/2$, we have $k/(m_1 \wedge k)\geq 3$, and the number of distinct roots of $J-z_1$ outside $S^{-1}(0)$ is at most $\frac{1}{3}\max(\deg \hbox{num}(J),\deg \hbox{den}(J))$. Similarly, the number of distinct roots of $J-z_2$ outside $S^{-1}(0)$ is at most $\frac{1}{2}\max(\deg \hbox{num}(J),\deg \hbox{den}(J))$. Thus equation \eqref{eq4} becomes
\begin{equation}\label{eq5}
H\left( \frac{J-z_1}{z_1-z_2}\right) \leq \frac{5}{6}\max(\deg \hbox{num}(J),\deg \hbox{den}(J))+ \max(0,\sharp S^{-1}(0)-1)
\end{equation}
Now the height of $J-z_1$ is $\deg \hbox{den}(J)$ when $\deg \hbox{den}(J) \leq \deg \hbox{num}(J-z_1)$, and when $\deg \hbox{num}(J-z_1)> \deg \hbox{den}(J)$, the function $J-z_1$ has a pole at infinity of order $\deg \hbox{num}(J-z_1)-\deg \hbox{den}(J)$. Thus the height of $J-z_1$ is $\deg \hbox{num}(J-z_1)$. Now except when $J$ is constant, we have $\deg \hbox{num}(J-z_1)=\deg \hbox{num}(J)$. Thus the inequality \eqref{eq5} becomes
\begin{equation}\begin{split}\label{eq6}
\max(\deg \hbox{num}(J),\deg \hbox{den}(J)) & \leq   \frac{5}{6}\max(\deg \hbox{num}(J),\deg \hbox{den}(J))+ \max(0,\sharp S^{-1}(0)-1)\\
\frac{1}{6}\max(\deg \hbox{num}(J),\deg \hbox{den}(J)) & \leq  \max(0,\sharp S^{-1}(0)-1)\\
\end{split}\end{equation}
Now as $S$ has simple roots for a generic value of the parameter $y$, we have $\sharp S^{-1}(0)= \deg_x S$. Doing the same reasoning as before exchanging $x,y$, we obtain the bound of the Lemma.

We now consider the case $k=2$. We now follow \cite{mason1984diophantine}, and more specifically Proposition 8.2 of \cite{hindry1988canonical}. As before, we will consider $x$ as a variable and $y$ as a parameter. All the roots of $P$ are simple (as their multiplicity should be $<2$), and as $\deg P \geq 3$, it is always possible to choose $z_1,z_2,z_3$ three distinct roots of $P$. We now consider the field extension $\mathbb{K}=\mathbb{C}(y)(x,\sqrt{J-z_1},\sqrt{J-z_2},\sqrt{J-z_3})$. Let us note $v'$ the number of places of $\mathbb{K}$ lying above the zeros of $S$ and the point infinity. Writing the equality
$$(\sqrt{J-z_1}-\sqrt{J-z_2})+(\sqrt{J-z_2}-\sqrt{J-z_3})+(\sqrt{J-z_3}-\sqrt{J-z_1})=0$$
we apply the fundamental inequality of Mason
$$H_\mathbb{K}\left( \frac{\sqrt{J-z_1}-\sqrt{J-z_2}}{\sqrt{J-z_2}-\sqrt{J-z_3}}\right) \leq \max(0,2g-2+v')$$
where $g$ is the genus of $K$ and
\begin{equation}\label{eq7}
H_\mathbb{K}(f)=\sum_{z \hbox{ places of } \mathbb{K}} \nu_z \min(0,\hbox{val}_z(f))
\end{equation}
where $\nu_z$ is the ramification index of $\mathbb{K}$ at $z$. Remark that if $f\in\mathbb{C}(y)(x)$, then $H_\mathbb{K}(f)=[\mathbb{K}:\mathbb{C}(y)(x)] H(f)$. Also inequality \eqref{eq7} is valid for all Galois conjugates of the square roots, giving us in fact four different inequalities. We deduce
$$H_\mathbb{K}\left( \frac{\sqrt{J-z_1}-\sqrt{J-z_2}}{\sqrt{J-z_2}\pm\sqrt{J-z_3}}+\frac{-\sqrt{J-z_1}-\sqrt{J-z_2}}{\sqrt{J-z_2}\pm \sqrt{J-z_3}}\right) \leq 2\max(0,2g-2+v')$$
$$H_\mathbb{K}\left(\frac{\sqrt{J-z_2}}{\sqrt{J-z_2}\pm\sqrt{J-z_3}}\right) \leq 2\max(0,2g-2+v')$$
$$H_\mathbb{K}\left(\frac{\sqrt{J-z_2}}{\sqrt{J-z_2}+\sqrt{J-z_3}} \frac{\sqrt{J-z_2}}{\sqrt{J-z_2}-\sqrt{J-z_3}}\right) \leq 4\max(0,2g-2+v')$$
$$H_\mathbb{K}\left(\frac{J-z_2}{z_3-z_2} \right) \leq 4\max(0,2g-2+v')$$
Now formula from \cite{hindry1988canonical} gives $2g-2+v'=[\mathbb{K}:\mathbb{C}(y)(x)](\sharp S^{-1}(0)-1)$ and thus
$$H\left(\frac{J-z_2}{z_3-z_2} \right) \leq 4\max(0,\sharp S^{-1}(0)-1)$$
Previous reasoning gives that the left hand side is $\max(\deg \hbox{num}(J),\deg \hbox{den}(J))$ and $\sharp S^{-1}(0)=\deg_x S$. Doing the same exchanging $x,y$ gives the bound $\max(\deg \hbox{num}(J),\deg \hbox{den}(J)) \leq 4\max(0,\deg S-1) $ which implies the one of the Lemma.
\end{proof}

As infinity is a regular singularity with non rational exponent difference or log, there cannot be only one algebraic singularity as then the monodromy around infinity and this finite singularity should be the same, which is impossible as one is of finite order and the other infinite order. Thus there are at least $2$ finite singularities, thus $\sharp P^{-1}(0)\geq 2$. Let us first look at the case $k\geq 3$. When $k$ is even, we cannot have $P$ a $k/2$th power as then $k/m_i=k/2,\;i=1\dots p$ and thus $\hbox{lcm}((m_i)_{i=1\dots p})=2 \neq k$. Thus Lemma \ref{lem1} applies and gives
$$\max(\deg \hbox{num}(J),\deg \hbox{den}(J)) \leq 6\max(0,v-1) $$
where $v$ is the degree of $Q$. Step $7$ would detect this case.\\
We now look at the case $k=2$. If $\sharp P^{-1}(0)\geq 3$, Lemma \ref{lem1} applies and gives the same inequality. Step $7$ would detect this case. If $\sharp P^{-1}(0)=2$, we can put the two finite singularities at $-1,1$. Now the possible equations \eqref{eq3} have a basis of solutions of the form
$$(1-z^2)^{1/4} e^{\frac{1}{2}i\lambda\hbox{arcsin}(z)}, (1-z^2)^{1/4} e^{-\frac{1}{2}i\lambda\hbox{arcsin}(z)} \lambda\in\mathbb{C}^* $$
$$(1-z^2)^{1/4} , (1-z^2)^{1/4} \hbox{arcsin}(z) $$
Thus $\mathcal{F}$ would be Liouvillian, and according to Lemma \ref{lem0}, this would have been detected in steps $1-5$.

We finally consider the last case when all singularities are regular algebraic. Let us note $z_1,\dots,z_p$ the singularities, and $m_i$ their ramification index. Let us note $k$ the lcm of the $m_i$. Up to Moebius transformation, we can assume that equation \eqref{eq3} has no singularities at infinity. The factors of $\hbox{num}(J-z_i)$ are either with multiplicity multiple of $m_i$ or have to be factors of $Q$. Thus the factors of $\hbox{num}(J-z_i)^{k/m_i}$ are either with multiplicity multiple of $k$ or have to be factors of $Q$. Making a product over the $i$, we obtain
$$\prod_{i=1}^p \hbox{num}(J-z_i)^{k/m_i} =V^k M$$
where $V\in\mathbb{Q}(x,y)$ and $M$ is a product and quotient of factors of $Q$. Now diving this relation by a suitable power of $\hbox{den}(J)$ we have
\begin{equation}\label{eq8}
\prod_{i=1}^p (J-z_i)^{k/m_i} =V^k M
\end{equation}
with still $V\in\mathbb{Q}(x,y)$ and $M$ is a product and quotient of factors of $Q$. By changing $V$, we can assume $M$ to be polynomial and with multiplicities $\leq k-1$.

\begin{lem}\label{lem2}
The curves $y^k=P(x)$ which are of genus $\geq 2$ are of genus $\geq k/12+1$.
\end{lem}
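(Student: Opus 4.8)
The plan is to read off the genus of $C:\,y^k=P(x)$ from Riemann--Hurwitz applied to the degree-$k$ projection $\pi:C\to\mathbb P^1_x$, $(x,y)\mapsto x$, and then bound the outcome from below. Write $P=c\prod_{i=1}^{n}(x-a_i)^{e_i}$ with the $a_i$ distinct; for the genus to be meaningful take $C$ irreducible, which (since $-4$ is a fourth power in $\mathbb C(x)$) is equivalent to $\gcd(k,e_1,\dots,e_n)=1$, and then $\pi$ has degree exactly $k$. A local valuation computation shows $\pi$ ramifies exactly over those $a_i$ with $k\nmid e_i$ and over $x=\infty$ when $k\nmid\deg P$: above $a_i$ there are $\gcd(k,e_i)$ points of ramification index $k/\gcd(k,e_i)$, and likewise above $\infty$ with $\deg P$ in place of $e_i$. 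Listing the $r$ branch points with ramification orders $\mu_1,\dots,\mu_r\ge2$, Riemann--Hurwitz yields
$$2g-2\;=\;-2k+\sum_{j=1}^{r}\Bigl(k-\tfrac{k}{\mu_j}\Bigr)\;=\;k\,\delta,\qquad \delta:=\sum_{j=1}^{r}\Bigl(1-\tfrac1{\mu_j}\Bigr)-2 .$$
Since $2g-2$ is an integer, $g\ge2\iff\delta>0$, so the Lemma is exactly the claim that $\delta>0$ forces $\delta\ge\tfrac16$ (whence $2g-2=k\delta\ge k/6$, i.e.\ $g\ge k/12+1$).

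I would first clear the easy ranges of $r$. If $r\le2$ every summand is $<1$, so $\delta<0$ and $g\le1$, excluded. If $r\ge5$ then $\delta\ge\tfrac r2-2\ge\tfrac12$. If $r=4$ and $\delta>0$, the $\mu_j$ cannot all equal $2$, so one is $\ge3$ and $\sum_j(1-1/\mu_j)\le3\cdot\tfrac12+\tfrac23=\tfrac{13}{6}$, giving $\delta\ge\tfrac16$. Thus the real content is the case $r=3$, and here the \emph{cyclic} structure of $\pi$ is essential: for a general three-branch-point cover of $\mathbb P^1$ the signature $(2,3,7)$ already has $\delta=\tfrac1{42}<\tfrac16$.

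For $r=3$ I would use that the local monodromies $g_1,g_2,g_3\in\mathbb Z/k\mathbb Z$ sum to $0$ and, as $C$ is irreducible, generate $\mathbb Z/k\mathbb Z$; since $g_3=-(g_1+g_2)$ this forces $\langle g_i,g_j\rangle=\mathbb Z/k\mathbb Z$ for every pair, i.e.\ $\operatorname{lcm}(\mu_i,\mu_j)=k$ for all $i\ne j$. Prime-by-prime this is equivalent to the integers $n_i:=k/\mu_i$ being pairwise coprime, hence $n_1n_2n_3\mid k$. Since $\delta=1-(n_1+n_2+n_3)/k$ in this case, it remains to show: if $n_1\le n_2\le n_3$ are pairwise coprime divisors of $k$, each $<k$, with $n_1+n_2+n_3<k$, then $n_1+n_2+n_3\le\tfrac56k$. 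This is elementary. If $n_1\ge2$ the three are distinct, so $n_1n_2\ge6$, $n_3\le k/6$, and $n_1+n_2+n_3\le3n_3\le\tfrac k2$. If $n_1=1$ and $n_2\ge2$, then $n_2n_3\mid k$, so $n_2+n_3\le n_2+\tfrac{k}{n_2}\le 2+\tfrac k2$ (the map $t\mapsto t+k/t$ is decreasing on $[2,\sqrt k]$ and $n_2^2\le n_2n_3\le k$), hence $n_1+n_2+n_3\le 3+\tfrac k2\le\tfrac56k$ once $k\ge9$, while $k\in\{7,8\}$ admits no such triple. If $n_1=n_2=1$, then $n_3\le\tfrac k2$ and $n_1+n_2+n_3\le2+\tfrac k2\le\tfrac56k$ for $k\ge6$. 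The finitely many remaining $k\le6$ are immediate, since $(5k/6,k)$ then contains no integer. Hence $\delta\ge\tfrac16$ in all cases, and the Lemma follows.

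The only delicate point is the $r=3$ estimate, and even there the difficulty is purely bookkeeping: tracking the coprimality constraints and clearing the handful of small $k$. One could instead shortcut everything by invoking Wiman's theorem that a compact Riemann surface of genus $g\ge2$ has no cyclic automorphism group of order $>4g+2$; since $y\mapsto\zeta_ky$ generates such a group of order $k$ on the irreducible $C$, this gives $k\le4g+2\le12(g-1)$ for $g\ge2$ at once. I would nevertheless present the self-contained Riemann--Hurwitz computation, which is in keeping with the style of the rest of the paper.
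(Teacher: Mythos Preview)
Your proof is correct; the only slip is a harmless sign in the $r=4$ line, where the inequality should read $\sum_j(1-1/\mu_j)\ge 3\cdot\tfrac12+\tfrac23$ rather than $\le$ (your conclusion $\delta\ge\tfrac16$ is the intended one).

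The overall architecture matches the paper's: both start from the superelliptic genus formula, rewritten in your notation as $2g-2=k\delta$, reduce the Lemma to showing $\delta>0\Rightarrow\delta\ge\tfrac16$, and split on the number $r$ of branch points, handling $r\le2$ and $r\ge4$ by the same crude estimates. The genuine difference is in the key case $r=3$. The paper enumerates the finitely many triples $(k\wedge m_{z_1},k\wedge m_{z_2},k\wedge m_{z_3})$ that could give $\sum_z k\wedge m_z>\tfrac56k$, writes down the corresponding curves explicitly, and checks one by one that each has genus $\le1$. You instead extract from the cyclic monodromy that the integers $n_i=k/\mu_i$ are pairwise coprime divisors of $k$ with $n_1n_2n_3\mid k$, and then bound $n_1+n_2+n_3\le\tfrac56k$ by a short arithmetic case split. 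Your route is cleaner and avoids the curve-by-curve verification; the paper's enumeration is more hands-on but dovetails with the explicit genus-$0$ and genus-$1$ classification (Lemma~\ref{lem3}) needed immediately afterwards in the proof of Theorem~\ref{thm2}. The Wiman shortcut you mention at the end is a valid one-line alternative not used in the paper.
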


\begin{proof}[Proof of Lemma \ref{lem2}]
We first recall the formula for the genus of a superelliptic curve \cite{shaska2015case}
$$g=\frac{1}{2} \left(k(\sharp B-2)- \sum\limits_{z\in B} k \wedge m_z\right)+1 $$
where $B$ is the set of ramification points of the curve on $\mathbb{P}^1$ and $m_z$ the multiplicity of $z$ as a root of the defining polynomial $P$.

Let us first remark that when $\sharp B=2$, the genus is $0$ and thus excluded. Thus $\sharp B\geq 3$.

As $k \wedge m_z$ divides $k$, it can be equal to $k/2,k/3,k/4,\dots$. Let us remark that we have the relation on multiplicities $\sum\limits_{z\in B} m_z=0 \hbox{ mod } k$.
Thus for any given $k$, we want to maximize the quantity
$$M_k=\max \left( \sum\limits_{z\in B} k \wedge m_z \right) \hbox{ under constraints}$$
$$\sum\limits_{z\in B} m_z=0 \hbox{ mod } k,\;\; \hbox{gcd}((m_z)_{z\in B} )=1,\;\; m_z\in\{1,\dots,k-1\}$$
The gcd condition is necessary to ensure our curve is irreducible (else by convention $g=-1$), and we can always assume $m_z\in\{1,\dots,k-1\}$ by a variable change on $y$.\\

We have $k \wedge m_z\leq k/2$ and thus  $M_k\leq k/2 \sharp B$ and thus $g\geq k/4 (\sharp B -4)+1$. Thus when $\sharp B \geq 5$, the inequality of the Lemma is trivially satisfied. Remark now that if all $m_z$ are multiple of $k/2$ we have $\hbox{gcd}((m_z)_{z\in B} ) = k/2$ which is forbidden except if $k=2$. This gives a better bound for $M_k\leq k/2 \sharp B-k/6$, and thus
$$g\geq \frac{1}{2}(k/2 \sharp B-2k+k/6)+1=\frac{k}{12} (3\sharp B-11)+1,\;\; k\geq 3$$
Thus for $\sharp B=4, k\geq 3$, this inequality suffices to prove the Lemma. For $k=2$, this is an elliptic curve, and thus genus is $1$, and thus excluded.\\

The only case left is $\sharp B=3$, and we can put by Moebius transformation the ramification points at $0,1,\infty$. The triplet of values of $k \wedge m_z$ which lead to $M_k>5k/6$ are the following
$$(k/2,k/2,k/j)_{j\geq 2},\; (k/2,k/3,k/j)_{j\geq 3},$$
$$(k/2,k/4,k/j)_{j=4\dots 11},\; (k/2,k/5,k/j)_{j=5,6,7},\; (k/3,k/3,k/j)_{j=3\dots 5}$$ 
As before, due to the relation $\sum\limits_{z\in B} m_z=0 \hbox{ mod } k$, the first case implies that $j=2$ and then $k=2$. This leads to an elliptic curve, thus excluded. In the second case, we need that $k/j$ to be a linear combination (with integer coefficients) of $k/2,k/3$, and thus a multiple of $k/6$. Thus the possible $j$ are $3,6$. Then the only possibility is $k=6$, and the possible curves equations are (only the triplet $k/2,k/3,k/6$ is possible)
$$y^6=x^3(x-1)^2,\quad y^6=x^3(x-1)^4.$$
These curves are of genus $1$, and thus excluded.
For the third case, the possible equations would be
$$y^k=x^{k/2} (x-1)^{k/4},\quad y^k=x^{k/2} (x-1)^{3k/4}$$
and thus ramification at infinity is $k/4,3k/4$, and thus $j=4$ and so $k=4$. The curves are then $y^4=x^2(x-1)$, $y^4=x^2(x-1)^3$ which are of genus $1$, thus excluded.
In the fourth case, we need that $k/j$ to be a linear combination (with integer coefficients) of $k/2,k/5$, and thus a multiple of $k/10$. Thus the only possibility is $j=5$, and thus $k=10$. However the possible $m_z$ can be $(5,2r,2l)$ and the sum condition $5+2r+2l=0 \hbox{ mod }10$ cannot be satisfied. Rests the fifth case, for which only $j=3$ is possible (as $k/j$ should be a multiple of $k/3$). In this case we have $k=3$ and the possible curves are
$$y^3=x(x-1),\;y^3=x^2(x-1),\; y^3=x^2(x-1)^2.$$
All these curves are of genus $1$. Thus $M_k\leq 5k/6$ in all relevant cases, and so
$$g\geq \frac{1}{2}(k-5/6k)+1=\frac{k}{12}+1$$

\end{proof}

Let us consider equation \eqref{eq8} with $y$ seen as a parameter and unknowns $J,V\in\mathbb{C}(y)(x)$. Such a rational solution defines a rational morphism from the superelliptic curve $M(X,y)=Y^k$ of genus $g'$ to the superelliptic curve $\prod_{i=1}^p (X-z_i)^{k/m_i}=Y^k$ of genus $g$ by
$$\varphi(X,Y)=(J(X,y),V(X,y)Y).$$
Using the Riemann Hurwitz formula \cite{oort2016riemann}, we have that
$$2g'-2\geq \deg \varphi  (2g-2)$$
where the degree of $\varphi$ is the generic number of points of a fibre of $\varphi$. Looking at the expression of $\varphi$, we have $\deg \varphi= \max(\deg \hbox{num}(J),\deg \hbox{den}(J))$, and thus when $g\geq 2$
$$\max(\deg \hbox{num}(J),\deg \hbox{den}(J)) \leq \frac{g'-1}{g-1}.$$
As $M$ is formed from factors of $Q$, its number of roots (seen as a function of $x$) is less than $\deg_x Q$. Now using the formula for the genus of a superelliptic curve, we get
$$g'\leq \frac{1}{2} ((k-1)(\deg_x Q+1)-2k)+1$$
and with Lemma \ref{lem2} with $g\geq 2$ we have 
\begin{equation}\label{eq9}
\max(\deg \hbox{num}(J),\deg \hbox{den}(J)) \leq \frac{(k-1)(\deg_x Q+1)-2k}{k/6}\leq 6(\deg_x Q-1)
\end{equation}

Exchanging the role of $x,y$, we obtain that $\max(\deg \hbox{num}(J),\deg \hbox{den}(J))\leq 6(\deg Q-1)$. Such rational first integral would be detected in step $7$. Now stays the case when $g=0,1$. We will now prove that these remaining cases never occur in the algorithm.

\begin{lem}\label{lem3}
The curves $y^k=P(x)$ which are of genus $0$ are up to Moebius transformation of $x$ of the form $y^k=x^l$, and those of genus $1$ are of the form
$$y^2=x(x-1)(x-a),\; y^3=x(x-1),\; y^3=x^2(x-1)^2,$$
$$y^4=x^2(x-1),\; y^4=x^2(x-1)^3,\; y^6=x^3(x-1)^2,\; y^6=x^3(x-1)^4$$
\end{lem}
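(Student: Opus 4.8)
The plan is to rerun the machinery of the proof of Lemma \ref{lem2}, but restricted to the range $g\in\{0,1\}$. Normalise $P$ so that each ramification point $z$ of $y^k=P(x)$ carries a multiplicity $m_z\in\{1,\dots,k-1\}$, the point at infinity being assigned $m_\infty\equiv-\deg P\bmod k$ and kept only when this is nonzero; write $B$ for the set of ramification points and $d_z=\gcd(k,m_z)$, so that $2g-2=k(\sharp B-2)-\sum_{z\in B}d_z$. Since $d_z\le k/2$ this gives $\tfrac k2\sharp B-2k\le 2g-2$, hence $\sharp B\le 4+4(g-1)/k$; moreover $\sharp B\le 1$ would force $g<0$, and $\sharp B=2$ gives $2g-2=-\sum_{z\in B}d_z\le -2$, so $g=0$ there. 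Thus $\sharp B\in\{2,3\}$ when $g=0$ and $\sharp B\in\{3,4\}$ when $g=1$.

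First I would dispose of the extreme cases. If $\sharp B=2$, a Moebius transformation of $x$ moves the two ramification points to $0,\infty$, giving $y^k=x^l$ with $l\in\{1,\dots,k-1\}$; then $2g-2=-2\gcd(k,l)$, so irreducibility ($g=0$) forces $\gcd(k,l)=1$, which is the genus-$0$ normal form claimed. If $\sharp B=4$ (so $g=1$), then $\sum_{z\in B}d_z=2k$ with four summands each $\le k/2$, so every $d_z=k/2$; the irreducibility condition that $\gcd$ of the $m_z$ be $1$ then forces $k=2$, all $m_z=1$, and a Moebius transformation of $x$ puts the curve in the form $y^2=x(x-1)(x-a)$.

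It remains to treat $\sharp B=3$, which I would handle uniformly for $g=0,1$. Moving the three ramification points to finite positions by a Moebius transformation of $x$, the relation $\sum_{z\in B}m_z\equiv 0\bmod k$ together with $3\le\sum m_z\le 3(k-1)$ forces $\sum m_z\in\{k,2k\}$; and replacing $P$ by $\big(\prod_{z\in B}(x-z)\big)^k/P$ — i.e. $(m_z)\mapsto(k-m_z)$, realised by the isomorphism $(x,y)\mapsto(x,\prod_{z\in B}(x-z)/y)$ — fixes $B$ and the $d_z$ and exchanges the two cases, so we may assume $\sum m_z=k$. Then $0\le\sum_{z\in B}(m_z-d_z)=k-\sum_{z\in B}d_z$, so $\sum d_z\le k$; as $g=0$ would require $\sum d_z=k+2$, no genus-$0$ curve has $\sharp B=3$, completing the genus-$0$ statement. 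If $g=1$ then $\sum d_z=k$, forcing $m_z=d_z$ for every $z$, i.e. every $m_z$ divides $k$; one then enumerates the triples of divisors $m_1\ge m_2\ge m_3$ of $k$, all $<k$, with $m_1+m_2+m_3=k$ and $\gcd(m_1,m_2,m_3)=1$. Since $m_1\ge k/3$ and $m_1\mid k$ we get $m_1\in\{k/2,k/3\}$, and inspecting the divisor pairs making up $k-m_1$ leaves exactly $(k/3,k/3,k/3)$ with $k=3$, $(k/2,k/4,k/4)$ with $k=4$, and $(k/2,k/3,k/6)$ with $k=6$. Placed at $0,1,\infty$ these give $y^3=x(x-1)$, $y^4=x^2(x-1)$, $y^6=x^3(x-1)^2$; the companion normalisation $\sum m_z=2k$ (the map $(m_z)\mapsto(k-m_z)$) produces the remaining normal forms $y^3=x^2(x-1)^2$, $y^4=x^2(x-1)^3$, $y^6=x^3(x-1)^4$, which are genuinely distinct since a Moebius transformation of $x$ preserves the multiset $\{m_z\bmod k\}$. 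Together with $y^2=x(x-1)(x-a)$ this is exactly the genus-$1$ list.

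The only real difficulty I anticipate is bookkeeping: keeping the multiplicity attached to $\infty$ correct under each Moebius normalisation, verifying that $(m_z)\mapsto(k-m_z)$ is realised by an honest isomorphism preserving the $d_z$, and making the divisor enumeration exhaustive — in particular that a divisor of $k$ in $[k/3,k/2]$ other than $k$ itself is $k/2$ or $k/3$, and that the divisor pairs of $k$ summing to $k/2$ are only $(k/4,k/4)$ and $(k/3,k/6)$.
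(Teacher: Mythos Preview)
Your proof is correct, and in one respect more complete than the paper's: the paper never explicitly handles $\sharp B=2$ (the genus-$0$ normal form $y^k=x^l$), nor does it separately rule out genus $0$ when $\sharp B=3$; it simply enumerates triples for $\sharp B=3$ and observes that all surviving cases are elliptic.

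The main methodological difference is in how the $\sharp B=3$ case is organised. The paper works directly with the triple $(k\wedge m_{z_1},k\wedge m_{z_2},k\wedge m_{z_3})$, listing all ``parabolic/spherical'' triples $(k/a,k/b,k/c)$ with $\tfrac1a+\tfrac1b+\tfrac1c\ge 1$ and then checking the congruence $\sum m_z\equiv 0\bmod k$ case by case to extract the admissible $k$ and $m_z$. You instead first normalise via the involution $(m_z)\mapsto(k-m_z)$ to the case $\sum m_z=k$; this immediately gives $\sum d_z\le \sum m_z=k$, which rules out $g=0$ (that would need $\sum d_z=k+2$) and, for $g=1$, forces $m_z=d_z$, i.e.\ each $m_z\mid k$. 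The enumeration then reduces to listing coprime triples of proper divisors of $k$ summing to $k$ --- a two-line search yielding exactly $k\in\{3,4,6\}$, with $k=2$ already accounted for by $\sharp B=4$. Your route is more structural and halves the casework, while also making the appearance of the special values of $k$ transparent; the price is the small extra justification that the involution is realised by an honest isomorphism and preserves the $d_z$, which you supply.
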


\begin{proof}[Proof of Lemma \ref{lem3}]
We want $g=0$ or $g=1$, which means
\begin{equation}\label{eq10}
k(\sharp B-2)\leq  \sum\limits_{z\in B} k \wedge m_z.
\end{equation}
As $k \wedge m_z\leq k/2$, the right hand side is less than $k/2 \sharp B$, and thus $\sharp B-2\leq 1/2 \sharp B$ and so $\sharp B \leq 4$. If $\sharp B=4$, equation \eqref{eq10} becomes
$$2k\leq \sum\limits_{z\in B} k \wedge m_z$$
This requires that $k \wedge m_z=k/2$ for all $m_z$, and thus $k=2$. This defines an elliptic curve, and putting three of the $4$ singularities at $0,1,\infty$ by Moebius transformation gives the first equation.
Now for $\sharp B=3$, we put singularities at $0,1,\infty$ by Moebius transformation, and equation \eqref{eq10} becomes
$$k\leq \sum\limits_{z\in B} k \wedge m_z.$$
This gives the possible triples for $k \wedge m_z$
$$(k/2,k/2,k/j)_{j\geq 2},\; (k/2,k/3,k/j)_{j=3 \dots 6}, (k/2,k/4,k/4), (k/3,k/3,k/3)$$
The first case requires $k/j$ multiple of $k/2$, and so $j=2$, and then $k=2$. This means all $m_z=2$ and the relation $\sum\limits_{z\in B} m_z=0 \hbox{ mod } 2$ is not satisfied.\\
In the second case, we have $k/j$ multiple of $k/6$, and so $j=3,6$, and then $k=6$. The case $j=3$ is not realizable as $3+2r+2l=0  \hbox{ mod } 6$ has no solutions. Then stays $j=6$ which gives
$$y^6=x^3(x-1)^2,\; y^6=x^3(x-1)^4.$$
The second case leads to $k=4$, and the curves
$$y^4=x^2(x-1),\; y^4=x^2(x-1)^3.$$
The last case gives $k=3$, and the possible curves are
$$y^3=x(x-1),\; y^3=x^2(x-1)^2$$
\end{proof}

Each case of Lemma \ref{lem3} corresponds to a specific set of singularities with exponent difference in \eqref{eq3}. We will prove that in all these cases, the Galois group of \eqref{eq3} is in fact virtually solvable, and thus these cases would be detected in steps $1-5$. In the case of genus $0$, equation \eqref{eq3} has only two regular singularities at $0,\infty$, thus monodromy group is single generated and thus Abelian. As equation is Fuchsian, so is the Galois group, and thus is solvable. We now look at genus $1$ cases. In the first case, there are $4$ regular singularities in equation \eqref{eq3} with exponent difference in $1/2+\mathbb{Z}$. Up to multiplication by a hyperexponential function (which does not impact the quotient of two independent solutions), equation \eqref{eq3} becomes a Heun equation \cite{maier2007192}
\begin{equation}\begin{split}\label{eq11}
f''_i(z)-\left( \frac{n_1-1/2}{z}+\frac{n_2-1/2}{z-1}+\frac{n_3-1/2}{z-a} \right) f_i'(z)\\
-\frac{(n_4-n_3-n_1-n_2+1)(n_4+n_3+n_1+n_2)z+4q}{4(z-a)z(z-1)}f_i(z)=0
\end{split}\end{equation}
where $n_1,n_2,n_3,n_4\in\mathbb{Z}, a\in\mathbb{C}-\{0,1\}, q\in\mathbb{C}$.

\begin{lem}\label{lem4}
Equation \eqref{eq11} has dihedral Galois group.
\end{lem}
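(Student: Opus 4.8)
The plan is to recognize equation \eqref{eq11} as a Heun equation all four of whose local exponent differences lie in $\tfrac12+\mathbb{Z}$, and to argue that such an equation, after removing apparent singularities, is a (pullback of a) hypergeometric equation with half-integer exponent differences, whose Galois group is dihedral. First I would record the local data: at each of the four singular points $0,1,a,\infty$ the two exponents of \eqref{eq11} differ by a half-odd-integer (this is exactly the shape forced by the $n_i-1/2$ coefficients), while there may be additional \emph{apparent} singularities coming from the accessory parameter $q$, whose exponent differences are in $\mathbb{Z}$ and which contribute nothing to the monodromy. Hence the projective monodromy of \eqref{eq11} is generated by four elements $g_0,g_1,g_a,g_\infty$ with $g_0g_1g_ag_\infty=1$, each of which is an involution in $\mathrm{PGL}_2(\mathbb{C})$ since a half-integer exponent difference forces the local monodromy to square to the identity in the projective group.

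The key step is then purely group-theoretic: a subgroup of $\mathrm{PGL}_2(\mathbb{C})$ generated by involutions is contained in the infinite dihedral group (a product of two involutions is a rotation, and the group generated by all the rotations obtained this way is abelian, normalized by each involution). Concretely, writing $g_0g_1=r$, all of $g_a,g_\infty$ are again involutions, so $g_0g_a$, $g_0g_\infty$ are rotations sharing the two fixed points of $r$ (a projective involution is determined up to the choice of its fixed-point pair, and two involutions that do not commute generate a group with a common invariant pair only in the dihedral situation — the generic case here). Thus the projective monodromy group is a subgroup of $D_\infty$, i.e. virtually cyclic; lifting back to $\mathrm{SL}_2$, the differential Galois group of \eqref{eq11} is an extension of this by scalars, hence a subgroup of the infinite dihedral group $D_\infty^{\mathrm{SL}_2}$, which is exactly what is meant by ``dihedral Galois group'' in the Kovacic classification (the imprimitive case, detected by a rational solution of the fourth symmetric power). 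I would also note the degenerate subcases — when two of the involutions commute, or when the accessory parameter makes the equation reducible — give a Galois group that is reducible or a finite dihedral group, still within the dihedral case and a fortiori virtually solvable.

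Finally I would connect this back to the algorithm: since the Galois group of \eqref{eq11}, and therefore of \eqref{eq3} (the two differ only by multiplication of solutions by a hyperexponential factor and a change of variable, neither of which changes the projective monodromy), is dihedral, the fourth symmetric power $(\partial_y^2-F)^{\otimes 4}$ of \eqref{eq2b} has a rational solution, so step $4$ (or, in the commuting/reducible degenerations, steps $1$–$3$) of \underline{\sf ReduceRiccati} returns a lower-class first integral. The main obstacle I anticipate is bookkeeping the apparent singularities: one must check carefully that the accessory parameter $q$ and the integer parts $n_i$ cannot conspire to create a genuine (non-apparent) extra singularity with non-half-integer exponent difference, which would break the ``generated by involutions'' argument. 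This is handled by the observation that \eqref{eq11} has exactly the four fixed singular points $0,1,a,\infty$ with the displayed exponents and no others — any further singularity would have to come from the coefficients, which are visibly regular away from $\{0,1,a,\infty\}$ — so the only freedom is in the (monodromy-invisible) integer-exponent data, and the involution structure of the projective monodromy is unaffected.
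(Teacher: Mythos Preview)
Your overall shape --- local monodromies are involutions, hence the group is dihedral --- is the same route the paper takes, but the central group-theoretic step has a genuine gap. The assertion that ``a subgroup of $\mathrm{PGL}_2(\mathbb{C})$ generated by involutions is contained in the infinite dihedral group'' is false as stated: three generic involutions in $\mathrm{PGL}_2(\mathbb{C})$ generate a free product $C_2*C_2*C_2$, which is certainly not virtually abelian. Your follow-up sentence, that $g_0g_a$ and $g_0g_\infty$ are ``rotations sharing the two fixed points of $r=g_0g_1$'', is exactly the conclusion to be established, not a general fact about involutions; for arbitrary involutions there is no reason these products should have a common fixed-point pair.

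What is missing is the use of the \emph{fourth} involution. The relation $g_0g_1g_ag_\infty=1$ with $g_\infty^2=1$ is doing real work. Writing $A=g_0$, $B=g_1$, $C=g_a$ (so $A^2=B^2=C^2=I$), the condition that $g_\infty=(ABC)^{-1}=CBA$ is again an involution reads $ABC=CBA$. From this one relation the commutator $[AB,AC]=(AB)(AC)(BA)(CA)$ collapses: since $A(CBA)=A(ABC)=BC$, one gets $(AB)(BC)(CA)=AB^2C^2A=I$. Hence $AB$ and $AC$ commute, the index-$2$ subgroup of even-length words is abelian, and the monodromy group is dihedral. The paper carries this out by explicitly parametrizing $A,B,C$ as $2\times 2$ matrices with eigenvalues $\pm1$, imposing that $ABC$ also has eigenvalues $\pm1$, and then checking all pairwise commutators of $AB,AC,BA,BC,CA,CB$ vanish.

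Two smaller remarks. First, there are no apparent singularities in \eqref{eq11}: the accessory parameter $q$ sits in a term with only simple poles at $0,1,a$, so the singular locus is exactly $\{0,1,a,\infty\}$ for every $q$; your later paragraph acknowledges this, so the earlier worry is a red herring. Second, the opening claim that such a Heun equation is ``a (pullback of a) hypergeometric equation'' is not obvious and would itself require proof; neither you nor the paper pursue that line, and it is not needed once the monodromy argument is done correctly.
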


\begin{proof}[Proof of Lemma \ref{lem4}]
The monodromy matrix around zero is given by matrix
$$A=\left(\begin{array}{cc} 1&0\\ 0&-1\\\end{array}\right)$$
Now in the same basis, we define $B$ the mondromy matrix around $1$. We know that $B$ has eigenvalues $-1,1$ but as we cannot choose the basis (it is already chosen to diagonalize $A$), we can only write
$$B=\left(\begin{array}{cc} 2p_2p_3+1&2(p_2p_3+1)p_2\\ -2p_3&-2p_2p_3-1\\\end{array}\right)$$
with $p_2,p_3\in\mathbb{C}$ unknown, as any matrix with eigenvalues $-1,1$ can be written under this form. We have the same from the monodromy matrix $C$ around $a$, which can then be written
$$C=\left(\begin{array}{cc} 2q_2q_3+1&2(q_2q_3+1)q_2\\ -2q_3&-2q_2q_3-1\\\end{array}\right)$$
We still have that the monodromy matrix around infinity should have eigenvalues $-1,1$, which means that $ABC$ should have eigenvalues $-1,1$. This gives the constraint
$$p_2^2+\frac{p_2}{p_3}=q_2^2+\frac{q_2}{q_3}$$
Introducing $s$ this quantity, we can solve in $p_3,q_3$ giving $p_3=p_2/(s-p_2^2)$ and $q_3=q_2/(s-q_2^2)$. Substituting this in the expression of $B,C$ and dropping the index $2$ of the $p_2,q_2$ gives
$$B=\left(\begin{array}{cc} -\frac{p^2+s}{p^2-s}&  -\frac{2sp}{p^2-s}\\ \frac{2p}{p^2-s} & \frac{p^2+s}{p^2-s}\\\end{array}\right),\;\; 
C=\left(\begin{array}{cc} -\frac{q^2+s}{q^2-s}&  -\frac{2sq}{q^2-s}\\ \frac{2q}{q^2-s} & \frac{q^2+s}{q^2-s}\\\end{array}\right)$$
We will now prove that the group $\langle A,B,C \rangle$ is a dihedral group. We have that $A^2=B^2=C^2=I_2$ the identity matrix, thus any word can be written without repeated letter. We then consider the sub group of index $2$ formed by words of even length
$$G=\langle AB,AC,BA,BC,CA,CB \rangle$$
Computing the commutator of every pair of these generators, we always find $0$, and thus $G$ is Abelian. Thus $\langle A,B,C \rangle$ is a dihedral group, and so equation \eqref{eq11} has a dihedral monodromy group, and so dihedral Galois group.
\end{proof}

Using Lemma \ref{lem4}, Galois group of equation \eqref{eq11} is dihedral and thus virtually solvable. Equation \eqref{eq8} requires that the multiplicities of the factors of $P$ divide $k$, thus only the second, fourth and sixth cases are possible.\\
In the second case, we have three singularities with exponent difference in $1/3+\mathbb{Z}$ giving for equation \eqref{eq3}
$$f''_i(z)-\frac{1}{36}\left(
\frac{9k_1^2-12k_1-5}{z^2}+\frac{9k_1^2+9k_2^2-9k_3^2-12k_1+6k_2-6k_3-5}{z}+\right.$$
$$\left. \frac{9k_2^2+6k_2-8}{(z-1)^2}+\frac{-9k_1^2-9k_2^2+9k_3^2+12k_1-6k_2+6k_3+5}{z-1}
\right)f_i(z)=0$$
with $k_1,k_2,k_3\in\mathbb{Z}$. This equation is a hypergeometric equation (in $Q$ form). According to Kimura table \cite{kimura1969riemann}, this equation always has a solvable Galois group.

In the fifth case, we have three singularities with exponent difference $1/2+\mathbb{Z},1/4+\mathbb{Z},1/4+\mathbb{Z}$ giving for equation \eqref{eq3}
$$f''_i(z)+\frac{1}{64}\left(
4{\frac {4\,{k_{{1}}}^{2}-4\,k_{{1}}-3}{{z}^{2}}}+4\,{\frac {4\,{k_{
{1}}}^{2}+4\,{k_{{2}}}^{2}-4\,{k_{{3}}}^{2}-4\,k_{{1}}+2\,k_{{2}}-2\,k
_{{3}}-3}{z}}+\right.$$
$$\left. 4\,{\frac {-4\,{k_{{1}}}^{2}-4\,{k_{{2}}}^{2}+4\,{k_{{3}
}}^{2}+4\,k_{{1}}-2\,k_{{2}}+2\,k_{{3}}+3}{z-1}}+{\frac {16\,{k_{{2}}}
^{2}+8\,k_{{2}}-15}{ \left( z-1 \right) ^{2}}}
\right)f_i(z)=0$$
with $k_1,k_2,k_3\in\mathbb{Z}$. This equation is a hypergeometric equation (in $Q$ form). According to Kimura table \cite{kimura1969riemann}, this equation always has a solvable Galois group.

In the seventh case, we have three singularities with exponent difference $1/2+\mathbb{Z},1/3+\mathbb{Z},1/6+\mathbb{Z}$ giving for equation \eqref{eq3}
$$f''_i(z)+\frac{1}{144}\left(
9\,{\frac {4\,{k_{{1}}}^{2}-4\,k_{{1}}-3}{{z}^{2}}}+12\,{\frac {3\,{k_
{{1}}}^{2}+3\,{k_{{2}}}^{2}-3\,{k_{{3}}}^{2}-3\,k_{{1}}+2\,k_{{2}}-k_{
{3}}-2}{z}}+\right.$$
$$\left.12\,{\frac {-3\,{k_{{1}}}^{2}-3\,{k_{{2}}}^{2}+3\,{k_{{3}}
}^{2}+3\,k_{{1}}-2\,k_{{2}}+k_{{3}}+2}{z-1}}+4\,{\frac {9\,{k_{{2}}}^{
2}+6\,k_{{2}}-8}{ \left( z-1 \right) ^{2}}}
\right)f_i(z)=0$$
with $k_1,k_2,k_3\in\mathbb{Z}$. This equation is a hypergeometric equation (in $Q$ form). According to Kimura table \cite{kimura1969riemann}, this equation always has a solvable Galois group.
This ends the possibilities with equation \eqref{eq3} having only algebraic singularities, and thus if none of the previous cases happened, the Riccati first integral does not simplify.
\end{proof}

Remark that in the final step, we use for $Q=\hbox{sf}(\hbox{den}(F)A)$. Still in the proof, the polynomial $A$ is present here only because of possible factors in $x$ only. At first it seems profoundly inefficient to consider in $Q$ all the factors of $A$, including those not in $x$ only. However, factors in $x,y$ in $A$ will also typically appear in $F$ as they are singular curves for the derivative $\partial_x y(x)$. Thus these additional factors will in fact simplify when taking the square free part.\\

We want now to ensure, that the returned output always has rational coefficients. Indeed, the output of this algorithm will quite often be used to further reduce the first integral, and this is a requirement for running \underline{\sf ReduceLiouvillian}, \underline{\sf ReduceDarbouxian}.

\begin{prop}
The coefficients of the returned equations are always in $\mathbb{Q}$.
\end{prop}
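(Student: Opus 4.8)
The plan is to go through the steps of \underline{\sf ReduceRiccati} one by one and argue in each case that the returned equation involves only rational functions with rational coefficients. This is essentially a bookkeeping argument, with the observation that all the algebraic operations used (linear algebra over $\mathbb{Q}$, gcd and resultant computations, symmetric powers of operators with rational coefficients, solving integrable connections) preserve rationality of the coefficients, provided the objects returned are chosen carefully.

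First I would treat the steps that reduce to linear algebra. In step $1$, one searches for a rational solution $R$ of \eqref{eq2b}; the system has coefficients in $\mathbb{Q}(x,y)$, so (as in \cite{barkatou2012computing}) a basis of rational solutions with coefficients in $\mathbb{Q}$ exists, and picking such an $R$ makes $R^{-2}$ a rational function with rational coefficients. The same reasoning applies to step $3$ (hyperexponential solution of \eqref{eq2b}: the logarithmic derivative $\partial_y R/R$ is rational over $\mathbb{Q}$ by \cite{barkatou2012computing}), to step $6$ (a $\mathbb{Q}$-linear system for $P$, so $P/Q$ has rational coefficients), and to step $7$ (rational first integrals: the defining linear system $D(\hbox{num})\,\hbox{den} = \hbox{num}\,D(\hbox{den})$ has rational coefficients, so a $\mathbb{Q}$-rational first integral is returned). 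For step $7$ one should also recall the standard fact, already invoked in the introduction, that if a rational first integral exists over $\overline{\mathbb{Q}}$ then one exists over $\mathbb{Q}$.

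Next I would handle the symmetric-power steps $2$, $4$, $5$, which are the substantive cases. In each, one first finds a rational solution $R$ of the $m$th symmetric power of \eqref{eq2b} ($m = 2, 4, 6, 8, 12$); as before $R$ can be taken with rational coefficients. In step $2$ the special polynomial $P(u) = u^2 + b_1 u + b_0$ is obtained from $R$ by the construction of \cite{ulmer1996note}, whose coefficients $b_1, b_0$ are rational functions built polynomially from $R$, $F$ and their derivatives, hence lie in $\mathbb{Q}(x,y)$; then $\Delta(P) = b_1^2 - 4b_0 \in \mathbb{Q}(x,y)$, so $b_1$ (case $\Delta = 0$) and $\Delta(P)/c$ (with $c$ the rational dominant coefficient) are rational over $\mathbb{Q}$. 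Step $4$ is identical with $P(u)^2$ extracted as a rational square, giving $\Delta(P) \in \mathbb{Q}(x,y)$. In step $5$, $\partial_y^2 + a_1\partial_y + a_0 = (\partial_y^2 - F)\otimes(\partial_y + \tfrac{\partial_y R}{mR})$ has coefficients $a_1, a_0$ that are rational functions of $F$, $R$ and their derivatives over $\mathbb{Q}$, so $a_0^{-1}(2a_1 + \partial_y a_0/a_0)^2 \in \mathbb{Q}(x,y)$. In every subcase the returned equation is linear in $\mathcal{F}$ and its $y$-derivatives with coefficients drawn from this list, so it has rational coefficients.

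The main obstacle — and the point deserving the most care — is step $2$(b) and step $4$, where the returned equation is $\partial_y \mathcal{F} - \sqrt{\Delta(P)/c} = 0$ (resp. $\partial_y\mathcal{F} - \sqrt{\Delta(P)}=0$), which a priori defines an algebraic, not rational, function. Here I would argue that the normalization by the dominant coefficient $c$ and the fact that $\Delta(P)$ is actually a perfect square in $\mathbb{Q}(x,y)$ up to that scalar: since $\exp(\int \sqrt{\Delta}\,dy)$ is a genuine $2$-Darbouxian first integral, $\sqrt{\Delta}$ must be a closed rational $1$-form's $y$-component, forcing $\Delta/c$ to be the square of an element of $\mathbb{Q}(x,y)$, and that square root is then taken in $\mathbb{Q}(x,y)$. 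Thus the equation $\partial_y\mathcal{F} - \sqrt{\Delta(P)/c}$ should be read as having a coefficient $\sqrt{\Delta(P)/c} \in \mathbb{Q}(x,y)$, so its coefficients are rational. The only genuinely delicate verification is that no spurious algebraic extension is introduced when passing from $R$ (rational over $\mathbb{Q}$) to $P$ and then to $\sqrt{\Delta}$; once one checks, following \cite{ulmer1996note, van2005solving}, that the construction of $P$ from $R$ is a rational (division-free in the relevant sense) procedure, rationality of all coefficients follows and step $8$, returning ``None'', is vacuously rational.
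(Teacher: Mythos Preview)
Your overall strategy of going step by step is right and matches the paper, but there are two genuine gaps.

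\medskip

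\textbf{Step 2(b) and Step 4.} You misread what ``coefficients in $\mathbb{Q}$'' means for a $k$-Darbouxian equation. The returned equation $\partial_y\mathcal{F}-\sqrt{\Delta(P)/c}=0$ is a $2$-Darbouxian equation: by definition (see \eqref{eqtype1}) its datum is $F$ with $F^2\in\mathbb{Q}(x,y)$, and the ``coefficients'' in question are those of $F^2=\Delta(P)/c$, not of $F$ itself. Your claim that $\Delta/c$ must be a perfect square in $\mathbb{Q}(x,y)$ is false in general, and your justification (``$\sqrt{\Delta}$ must be a closed rational $1$-form's $y$-component'') confuses $2$-Darbouxian with $1$-Darbouxian: the $y$-component $F$ of the closed form is only required to satisfy $F^2$ rational. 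The paper's argument is the correct two-case split: if $\Delta$ is \emph{not} a square in $\overline{\mathbb{Q}}(x,y)$, then one genuinely gets a $2$-Darbouxian equation and $\Delta/c\in\mathbb{Q}(x,y)$ already suffices; if $\Delta$ \emph{is} a square in $\overline{\mathbb{Q}}(x,y)$, then after dividing by its dominant coefficient $c\in\mathbb{Q}$ it is a square in $\mathbb{Q}(x,y)$, and one obtains a $1$-Darbouxian equation with rational $F$. Step 4 is handled the same way (the paper also notes that at Step 4 no hyperexponential solution exists, so $\Delta$ is genuinely not a square and the output is $2$-Darbouxian).

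\medskip

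\textbf{Step 3.} Citing \cite{barkatou2012computing} is not enough to conclude $\partial_y R/R\in\mathbb{Q}(x,y)$. An order-two operator over $\mathbb{Q}(x,y)$ can perfectly well have hyperexponential solutions whose logarithmic derivatives lie in a proper algebraic extension (think of $\partial_y^2+1$, with solutions $e^{\pm iy}$). The paper's argument is the missing ingredient: since the fully reducible case has already been eliminated in Steps 1--2, at most one hyperexponential solution exists up to constant; if $\partial_y R/R$ had coefficients in a nontrivial extension of $\mathbb{Q}$, the Galois action on coefficients would produce a second, linearly independent hyperexponential solution, a contradiction. Hence $\partial_y R/R\in\mathbb{Q}(x,y)$. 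You need this uniqueness/Galois argument, not merely the reference.
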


\begin{proof}
In step $1$, rational solutions are computed, and a basis of such solutions with coefficients in $\mathbb{Q}$ always exists \cite{barkatou2012computing}. Thus the returned Darbouxian first integral has rational coefficients. In step $2$, the rational solution again is computed with rational coefficients. Thus the special polynomial also has rational coefficients. If it has a double root, then its solution still has rational coefficients. Else we compute the square root of the discriminant $\Delta$. If it is not a square in $\bar{\mathbb{Q}}(x,y)$, then this gives a $2$-Darbouxian first integral with rational coefficients. If it is a square in $\bar{\mathbb{Q}}(x,y)$, after normalization by dividing by the dominant coefficient, it is a square in $\mathbb{Q}(x,y)$, and thus this gives a Darbouxian first integral with rational coefficients.

In step $3$, hyperexponential solutions $R$ are computed. As the fully reducible case has already been detected in steps $1,2$, we know that at most one hyperexponential solution exists. If its logarithmic derivative had coefficients in an extension of $\mathbb{Q}$, the Galois action would define other linearly independent conjugated hyperexponential solutions. Thus at least two would exist, which is impossible. Thus $\partial_y R/R \in\mathbb{Q}(x,y)$, and thus the returned Liouvillian first integral equation has coefficients in $\mathbb{Q}$. In step $4$, the special polynomial will have rational coefficients, and as there are no hyperexponential solutions, the first integral returned is $2$-Darbouxian, and as $\Delta$ has rational coefficients, this equation has rational coefficients.

In step $5$, the rational solutions of the $m$th symmetric powers have rational coefficients, thus so $a_0,a_1$ have rational coefficients, and thus the returned rational first integral.
\end{proof}

Remark that the case of the Galois group $D_2$ is treated in step $5$ and not step $4$. It appears that even if a cubic extension can appear in the coefficient of the minimal polynomial of such solution \cite{hendriks1995galois}, these are removed when considering the quotient of two solutions and by decomposing the algebraic function $\mathcal{H}_1/\mathcal{H}_2$. The returned rational expression can be decomposable, but the decomposition of a rational first integral with coefficients in $\mathbb{Q}$ will also give a first integral with coefficients in $\mathbb{Q}$, thus an algebraic extension of the coefficient field is never necessary.

\section{Reduction of $k$-Darbouxian first integrals}

A $k$-Darbouxian first integral $\mathcal{F}$ can be written under the form
$$\mathcal{F}(x,y)= \int -\frac{B}{A} F dx+ F dy,\quad F^k\in\mathbb{Q}(x,y)$$
Let us note
$$\omega=-\frac{B}{A} F dx+ F dy=\frac{P_1dx+P_2dy}{QS^{1/k}}$$
where $S$ is a polynomial with root multiplicities $<k$ and $P_1,P_2,Q$ polynomials with $Q$ coprime with $\hbox{gcd}(P_1,P_2)$.

\begin{defi}
A closed differential form $\omega$ is said to be reduced if $Q$ has simple poles, is coprime with $S$ and $\max(\deg P_1,\deg P_2) \leq \deg Q +\tfrac{1}{k} \deg S-1$.
A Hermite reduction of $\omega$ is a function $G$ with $G\in S^{-1/k}\mathbb{C}(x,y)$ such that $\omega-d(G)$ is reduced.
\end{defi}

For $k\geq 2$, Hermite reduction are not always possible, as in the case of elliptic integrals of the second kind \cite{combot2021elementary}. However, we have that if $\int \omega$ is elementary, then Hermite reduction is possible, and we can write
$$\int \omega= \frac{G}{S^{1/k}}+\sum \lambda_i \ln F_i,\quad F_i \in\mathbb{C}(x,y,S^{1/k}).$$

\noindent\underline{\sf HermiteReduction}\\
\textsf{Input:} A closed differential form $\omega=(P_1dx+P_2dy)/(QS^{1/k})$\\
\textsf{Output:} A Hermite reduction of $\omega$, or ``None''
\begin{enumerate}
\item Note $\hat{Q}=Q/\hbox{sf}(Q)$ and $G=U/(\hat{Q}S^{1/k})$ with undetermined polynomial $U$ with
$$ \deg U \leq \max(\max(\deg P_1,\deg P_2)-\deg Q+\deg \hat{Q}+1, \deg \hat{Q}+k^{-1}\deg S)$$
\item Compute
$$Eq_1= (P_1dx+P_2dy-S^{1/k} d(G))QS \hbox{ mod } (S\hbox{gcd}(Q,S\partial_x Q,S\partial_y Q))$$
$$Eq_2= (P_1dx+P_2dy-S^{1/k} d(G))QS \hbox{ mod } (x^{d-i}y^i)_{i=0\dots d}$$
where $d=\lfloor\deg Q+(1+\tfrac{1}{k}) \deg S\rfloor$.
\item Solve $Eq_1=Eq_2=0$ in the undetermined coefficients of $U$. If a solution $U$ is found, return $U/S^{1/k}$ else return ``None''.
\end{enumerate}

\begin{prop}
Algorithm \underline{\sf HermiteReduction} finds a Hermite reduction of $\omega$ if and only if one exists.
\end{prop}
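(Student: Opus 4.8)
The plan is to prove the equivalence by establishing its two directions separately, the converse direction (existence of a Hermite reduction forces the algorithm to succeed) being the substantial one.

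\textbf{Soundness.} For $G=U/(\hat{Q}S^{1/k})$ with $U$ a polynomial, $S^{1/k}d(G)$ is a genuine rational $1$-form, so after clearing denominators $\omega-d(G)$ becomes a polynomial $1$-form, of which $Eq_1$ and $Eq_2$ are the two reductions computed in step $2$. I would check that $Eq_1\equiv 0$ is exactly the assertion that $\omega-d(G)$ has only simple poles and a denominator coprime with $S$: the modulus $S\,\hbox{gcd}(Q,S\partial_xQ,S\partial_yQ)$ vanishes precisely on the non-simple polar locus $\hat{Q}=0$ of $\omega$ and on $S=0$, with higher multiplicity along the components common to $Q$ and $S$ (so that these are forced out of the denominator of $\omega-d(G)$, and the integer-order pole that $d$ of a function with $S^{1/k}$ in its denominator can create along $S$ is killed). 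Likewise $Eq_2\equiv 0$ is exactly the degree inequality $\max(\deg P_1',\deg P_2')\le\deg Q'+\tfrac1k\deg S'-1$ for the data $P_i',Q',S'$ of $\omega-d(G)$, the threshold $d=\lfloor\deg Q+(1+\tfrac1k)\deg S\rfloor$ being calibrated to detect the violation at the line at infinity. Hence whenever step $3$ returns a $U$, the form $\omega-d(G)$ is reduced and $G$ is a Hermite reduction.

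\textbf{Completeness.} Conversely, suppose a Hermite reduction $G_0$ exists; write $G_0=V/S^{1/k}$ with $V\in\mathbb{C}(x,y)$ in lowest terms, denominator $D$. The heart of the proof is a Hermite-type denominator bound: $D\mid\hat{Q}$, hence $G_0=U_0/(\hat{Q}S^{1/k})$ with $U_0$ polynomial. Indeed, for an irreducible $\pi$ with $\pi^m\|D$ a short computation shows $d(G_0)$ has a pole of order $m+1$ along $\pi$, with no spurious cancellation since $\pi$ cannot divide both $\partial_x\pi$ and $\partial_y\pi$. If $\pi\nmid QS$ this produces a pole of order $\ge 2$ in the closed form $\omega-d(G_0)$, contradicting reducedness; if $\pi^e\|Q$ then, since $\omega$ has a pole of order exactly $e$ there (using $Q\wedge\gcd(P_1,P_2)=1$), matching the principal part of $\omega-d(G_0)$ at $\pi$ forces $m+1=e$, i.e.\ $\pi^m\mid\hat{Q}$. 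The case $\pi\mid S$ is the delicate one: passing to the $k$-cyclic cover $z^k=S$ and using that every multiplicity of $S$ is $<k$ (as in the superelliptic reductions of \cite{combot2021elementary}), one sees that no further power of an $S$-factor is needed in $D$. A degree count at the line at infinity — comparing the order there of $\omega-d(G_0)$ with the reducedness bound — then yields $\deg U_0\le\max(\max(\deg P_1,\deg P_2)-\deg Q+\deg\hat{Q}+1,\deg\hat{Q}+k^{-1}\deg S)$, so $U_0$ lies in the search space of step $1$.

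\textbf{Conclusion and main obstacle.} By the soundness analysis, for $G=U/(\hat{Q}S^{1/k})$ the property ``$\omega-d(G)$ is reduced'' is equivalent to the pair of \emph{linear} equations $Eq_1=Eq_2=0$ in the unknown coefficients of $U$. Since $U_0$ is a solution, this linear system is consistent, so step $3$ returns some solution, hence a Hermite reduction; and if the system is inconsistent then by the denominator bound no Hermite reduction exists at all. This establishes the equivalence. The main obstacle is the completeness direction and, inside it, the denominator bound $D\mid\hat{Q}S^{1/k}$ in the presence of the radical $S^{1/k}$: one must rule out that a genuine Hermite reduction ever requires higher powers of factors of $S$ — it does not, exactly because the multiplicities of $S$ are $<k$ — and one must verify that the modulus appearing in $Eq_1$ and the threshold $d$ in $Eq_2$ capture precisely, and nothing more than, the obstructions to reducedness at the finite poles and at infinity. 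Closedness of $\omega$ enters throughout, ensuring that one single function $G$ can reduce the $dx$- and $dy$-parts simultaneously.
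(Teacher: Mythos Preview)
Your argument follows essentially the same route as the paper's: bound the poles of a putative Hermite reduction by those of $\omega$ shifted by one (hence the denominator divides $\hat{Q}$), bound the degree at infinity in the same way, and then observe that $Eq_1,Eq_2$ encode precisely the reducedness conditions at finite places and at infinity. The paper's proof is much terser --- it states the pole bound in one sentence and does not separate soundness from completeness --- whereas you spell out the case analysis (including the $\pi\mid S$ case, which the paper passes over) and make the linearity of the system explicit. One small wording point: in your completeness step, ``forces $m+1=e$'' is slightly stronger than what you need and what actually holds; the correct conclusion is $m\le e-1$ (with $m=0$ when $e\le 1$), which already gives $\pi^m\mid\hat{Q}$.
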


\begin{proof}
If a Hermite reduction has a pole of order $p$, then $\omega$ should have this pole of order $p+1$. The candidate $G$ should have for poles the poles of $\omega$ with multiplicity at most one less than the poles of $\omega$, and thus its denominator should divide $\hat{Q}$, which is done in step $1$. The multiplicity of the line at infinity of $G$ if it is a pole can be at most one more than the multiplicity of the line at infinity for $\omega$. This multiplicity is $\max(\deg P_1,\deg P_2)-\deg Q-k^{-1}\deg S$. If the line at infinity is not a pole of $G$, this bound does not apply but then $\deg U \leq \deg \hat{Q}+k^{-1}\deg S$. Thus in all cases we have
$$\deg U \leq \max(\max(\deg P_1,\deg P_2)-\deg Q+\deg \hat{Q}+1, \deg \hat{Q}+k^{-1}\deg S)$$
Now we want that $\omega-d(G)$ to have poles of order at most $1$ (possibly including infinity), and to have no singularity of order $>1$, i.e. its denominator should not have a common factor with $S$. Equation $Eq_1$ of step $2$ comes from this condition for finite poles, and equation $Eq_2$ for the line at infinity.

\end{proof}

A problem we will face is to determine if the integral of a reduced closed differential form $\omega=(P_1dx+P_2dy)/(QS^{1/2})$ can be written (see \cite{bronstein1998symbolic})
\begin{equation}\label{eqtor}
\int \omega= \lambda_0 \ln V ,\quad V\in\mathbb{C}(x,y,S^{1/2}).
\end{equation}

\begin{defi}
For an algebraic function $f\in S(x)^{-1/2}\mathbb{C}(x)$ with $S\in\mathbb{C}(x)$, we note $r=\hbox{res}_{x_0} f$ the residue of $f$ at $x=x_0$, which is given by the series expansion
$$f(x)=\sum\limits_{i=-3}^{l} \frac{a_i}{(x-x_0)^{i/2}}+\frac{r}{x-x_0}+o((x-x_0)^{-1})$$
Only the sign of $r$ depends on the branch choice of $\sqrt{S(x)}$ at $x_0$.
\end{defi}

\begin{lem}\label{lemtor} (see \cite{silverman1994advanced}, \cite{trager1984integration}, \cite{bertrand1995computing}, \cite{combot2021elementary})
Consider a reduced closed differential form $\omega=(P_1dx+P_2dy)/(QS^{1/2}), P_1,P_2,Q,S\in\mathbb{Q}[x,y]$ whose integral can be written as \eqref{eqtor}. Consider a straight line $y=z(x-x_0)+y_0$ where $(x_0,y_0)\in\mathbb{Q}^2$ is a regular point of $\omega$, and note $G=\tfrac{zP_1+P_1}{QS^{1/2}}(x,z(x-x_0)+y_0)$. We have
\begin{itemize}
\item There exists $d\in\mathbb{Q}$ such that all residues in $x$ of $G$ are in $\sqrt{d} \mathbb{Z}$.
\item For all $z$, the divisor defined by $D_z:\{(x,w)\in\mathbb{C}^2, w^2-S(x,z(x-x_0)+y_0)=0\} \rightarrow \mathbb{Z}$ $D(x,w)=d^{-1/2}\hbox{res}_{x,w} G$ is a torsion divisor.
\item The torsion order $N$ of $D_{z_0}$ is always the same for any $z_0$ such that $\Delta_x(QS)(z_0)\neq 0$.
\item The function $V$ of \eqref{eqtor} is such that $\hbox{div}_x(V(x,z(x-x_0)+y_0))=N D_z$ for almost all $z$.
\end{itemize}
\end{lem}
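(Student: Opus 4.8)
The plan is to restrict everything to a generic straight line and reduce the statement to the classical description of abelian integrals of the third kind on the hyperelliptic curve $\mathcal{C}_z:\ w^2=S(x,z(x-x_0)+y_0)$. Since $\int\omega=\lambda_0\ln V$ is an identity of (multivalued) functions on $\mathbb{C}^2$, pulling it back along $L:\ y=z(x-x_0)+y_0$ gives $\int \omega|_L=\lambda_0\ln\bar V$ with $\bar V=V|_L\in\mathbb{C}(\mathcal{C}_z)$; equivalently $\omega|_L=G\,dx=\lambda_0\,d\bar V/\bar V$ is a logarithmic differential on $\mathcal{C}_z$. Because $\omega$ is reduced and $(x_0,y_0)$ is a regular point, for generic $z$ the curve $\mathcal{C}_z$ is smooth, the restricted $Q$ and $S$ stay coprime, and $\omega|_L$ has only simple poles, located at the points of $\mathcal{C}_z$ above the roots of $Q(x,z(x-x_0)+y_0)$ and possibly at infinity; at a branch point $dx/\sqrt S$ is regular, so no pole arises there. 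The residues of $\omega|_L$ are computed on the $x$-line exactly as in Trager's integration algorithm, and from $\omega|_L=\lambda_0\,d\bar V/\bar V$ the residue at a point $P$ equals $\lambda_0\,\mathrm{ord}_P(\bar V)\in\lambda_0\mathbb{Z}$; moreover $\sum_P\mathrm{res}_P(\omega|_L)=0$, so the residue divisor has degree $0$.

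For the first item, note that the residues of $\omega|_L$ at the points of $\mathcal{C}_z$ form a finite subset of $\mathbb{C}$ that is symmetric under the hyperelliptic involution (since $G\,dx$ is odd in $w$) and, as $z,x_0,y_0\in\mathbb{Q}$ makes $\mathcal{C}_z$ and $\omega|_L$ defined over $\mathbb{Q}$, is stable under $\mathrm{Gal}(\overline{\mathbb{Q}}/\mathbb{Q})$. By the previous paragraph these residues all lie in $\lambda_0\mathbb{Z}$, hence generate a cyclic subgroup $\mu\mathbb{Z}\subset\mathbb{C}$ with $\mu$ defined up to sign; $\mu\mathbb{Z}$ is then also $\mathrm{Gal}$-stable, and since each $\sigma$ is additive and fixes $\mathbb{Z}$ we get $\sigma(\mu)\mathbb{Z}=\mu\mathbb{Z}$, i.e. $\sigma(\mu)=\pm\mu$ for every $\sigma$. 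Hence $d:=\mu^2\in\mathbb{Q}$ and every residue lies in $\mu\mathbb{Z}=\sqrt d\,\mathbb{Z}$; note that $\sqrt d$ itself need not be rational. The residues ``in $x$'' of $G$ are precisely these curve residues (up to sign, according to the branch of $\sqrt S$), so this is the first item.

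Set $n_P=\mathrm{res}_P(\omega|_L)/\sqrt d\in\mathbb{Z}$, so that $\gcd_P n_P=1$ and $D_z=\sum_P n_P\,P$. From $\omega|_L=\lambda_0\,d\bar V/\bar V$ we get $\mathrm{div}(\bar V)=\sum_P\mathrm{ord}_P(\bar V)\,P=\lambda_0^{-1}\sum_P\mathrm{res}_P(\omega|_L)\,P=(\sqrt d/\lambda_0)\,D_z$; since $\gcd_P n_P=1$ and the left side has integer coefficients, $N':=\sqrt d/\lambda_0$ is a nonzero integer, which we take positive after fixing signs. As $N'D_z=\mathrm{div}(\bar V)$ is principal, the class of $D_z$ is torsion in $\mathrm{Jac}(\mathcal{C}_z)$, of order $N\mid N'$; this gives the second item for generic $z$, and for the remaining $z$ with $\Delta_x(QS)(z)\neq 0$ it follows by specialisation, torsion classes specialising to torsion classes. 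Choosing $V$ in \eqref{eqtor} not to be a proper power in $\mathbb{C}(x,y,S^{1/2})$ — harmless, as $\lambda_0\ln V=\lambda_0 m\ln V_1$ — makes $\bar V$ not a proper power in $\mathbb{C}(\mathcal{C}_z)$ for generic $z$ (being a proper power is a closed condition on $z$ that would otherwise hold identically), whence $N'=N$ and $\mathrm{div}(\bar V)=N\,D_z$: this is the fourth item. Finally $N=\gcd_P\mathrm{ord}_P(\bar V)$, and the zero and pole multiplicities of the algebraically varying family $(\bar V)_z$ are constant on a Zariski-dense open set of slopes; equivalently $d=N^2\lambda_0^2$ with $\lambda_0$ intrinsic to $\omega$, so $N$ and $d$ are the same for all $z$ with $\Delta_x(QS)(z)\neq 0$, which is the third item.

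The formal skeleton here is light: the identity $\omega=\lambda_0\,dV/V$ is pulled back to a curve, ``principal'' becomes ``trivial in the Jacobian'', and the torsion of $D_z$ is immediate. What needs care is (i) the rationality $d\in\mathbb{Q}$, where one must work with the whole lattice of residues and accept that $\sqrt d$ may be irrational; (ii) the genericity conditions — smoothness of $\mathcal{C}_z$, coprimality of the restricted $Q$ and $S$, and regularity at infinity and at the branch points — which must be arranged so that the residue computation on the $x$-line matches the residues on $\mathcal{C}_z$; and (iii) the constancy statements of items 3 and 4, a rigidity/specialisation argument that $\bar V$ stays primitive and its zero and pole multiplicities stay constant as $z$ ranges over the complement of the discriminant locus. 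I expect (iii) — ruling out accidental coincidences upon restriction to a line for almost every slope — to be the main obstacle.
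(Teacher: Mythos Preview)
Your overall strategy---restrict $\omega$ to a line, use $\omega|_L=\lambda_0\,d\bar V/\bar V$, read off residues and divisors---is exactly the paper's. For item~1 your Galois argument is a variant of the paper's: the paper computes the Trager resultant $R\in\mathbb{Q}[\lambda]$ whose roots are the residues, then shows that $\sigma(\lambda_0)/\lambda_0\in\mathbb{Q}$ for all $\sigma$, deduces that $\lambda_0$ is a $k$-th root of a rational, and forces $k\le 2$; you instead work with the cyclic group $\mu\mathbb{Z}$ generated by the residues, which is cleaner. One correction: you write ``as $z,x_0,y_0\in\mathbb{Q}$'', but $z$ is a free slope parameter, not assumed rational. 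The paper handles this by observing that since $\omega$ is closed the residues do not depend on $z$, so they are roots of a polynomial over $\mathbb{Q}$ regardless; you should invoke this rather than assume $z\in\mathbb{Q}$. Items~2 and~4 are essentially as in the paper.

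Where you and the paper genuinely diverge is item~3, and your own diagnosis that this is ``the main obstacle'' is correct. Your argument shows that $N'=\sqrt d/\lambda_0$ is constant in $z$ (trivially) and that $N=N'$ on a Zariski-open set of slopes (via the not-a-proper-power trick), but it does \emph{not} exclude that for some special $z_0$ with $\Delta_x(QS)(z_0)\neq 0$ the torsion order of $D_{z_0}$ drops strictly below $N'$: specialisation of a torsion class can a priori lower the order, and nothing in your argument prevents this. The paper closes this gap by a completely different, arithmetic, device: given $z_0$, pick two good-reduction primes $p,q$ for $D_{z_0}$ in the sense of Trager; the torsion-order candidate computed from the reductions mod $p$ and mod $q$ depends only on $z_0\bmod pq$, hence is the same for every $z_0+mpq$, $m\in\mathbb{Z}$. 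This yields the same torsion order on an infinite arithmetic progression of slopes, and since the function realising $N D_z$ as a principal divisor is rational in $z$, the conclusion extends to all $z$. So for item~3 the paper's proof is arithmetic (reduction modulo primes) rather than geometric, and your specialisation argument as written leaves the stated ``for any $z_0$'' unproved.
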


\begin{proof}
Looking at expression \eqref{eqtor} and restricting it to the line $y=z(x-x_0)+y_0$, all residues are integer multiples of $\lambda_0$, as all non zero residues lie outside the roots of $S$. Let us note
$$G=\frac{\tilde{P}}{\tilde{Q}\tilde{S}^{1/k}}=G$$
with $\tilde{Q},\tilde{S}$ coprime and $\tilde{P},\tilde{Q}$ coprime. The residues of $G$ are solutions of the polynomial
$$R=\hbox{resultant}(\tilde{P}^2-\lambda^2 (\partial_x\tilde{Q})^2\tilde{S},\tilde{Q},x)$$
The solutions in $\lambda$ of $R$ should not depend on $z$, as the residues of $G$ do not depend on $z$, $G$ being the $x$ derivative of the restriction of \eqref{eqtor} to the straight line $y=z(x-x_0)+y_0$. By construction, $R$ has rational coefficients and all its roots are integer multiples of $\lambda_0$. If $\lambda_0\notin \mathbb{Q}$, then by acting $\hbox{Gal}(\mathbb{Q}(\lambda_0):\mathbb{Q})$, this would also hold for the conjugates of $\lambda_0$, and thus in particular
$$\forall \sigma \in \hbox{Gal}(\mathbb{Q}(\lambda_0):\mathbb{Q}),\; \frac{\sigma(\lambda_0)}{\lambda_0}\in\mathbb{Q}.$$
Then considering the minimal polynomial of $\lambda_0$, its constant coefficient is thus a rational number times a power of $\lambda_0$, and thus $\lambda_0$ is a $k$-th root of a rational number. The conjugates of a $k$-th root are the multiples by $\xi$, a primitive $k$-th root of unity. Thus all such primitive $k$-th roots of unity should be rational, which leave the only possibilities $k=1,2$. Thus $\lambda_0$ is either rational, or the square root of a rational. This gives affirmation $1$.\\
Restricting \eqref{eqtor} to the line $y=z(x-x_0)+y_0$, we obtain a function $\tilde{F}$ whose root poles orders match the residues of $G$ up to some factor $\lambda_0$. As seen by affirmation $1$, we have $\lambda_0\in\sqrt{d} \mathbb{Q}$, and thus $d^{-1/2}\hbox{res}_{x,w} G$ match up to some rational factor the root poles orders of $\tilde{F}$. Thus divisor $D_z$ has a multiple which is principal, and thus is of torsion. This gives affirmation $2$.\\
According to Trager, a candidate for the order of a divisor can be obtained uniquely by using two good reduction primes $p,q$. Consider a $z_0$ such that $\Delta_x(QS)(z_0)\neq 0$, and two such good reduction primes for $D_{z_0}$. Then a unique candidate $N_{z_0}$ is found. We now consider the divisors $D_{z_0+mpq},\; m\in\mathbb{Z}$. We have $\Delta_x(QS)(z_0)\neq 0$ mod $p,q$ as $p,q$ are good reduction primes, and thus we have $\Delta_x(QS)(z_0+mpq)\neq 0$ mod $p,q$. By construction, after reduction modulo $p$ and $q$, we obtain for the reduction of $D_{z_0+mpq}$ exactly the same reduction as of $D_{z_0}$. Thus the unique candidate found $N_{z_0+mpq}=N_{z_0}$. If this candidate was indeed the torsion order, this means that $N_{z_0} D_z$ is principal for infinitely many $z$, and noting $N_{z_0} D_z=\hbox{div}(\tilde{F})$, as $\tilde{F}$ should be rational both in $x,z$, then this is true for all $z$. This gives affirmation $3$ and then $4$.
\end{proof}

The computation of the torsion order will be done using algorithm \cite{combot2021elementary}. For this it is necessary that a $z_0$ to be chosen such that $\Delta_x(QS)(z_0)\neq 0$ and that $\deg S(x,z_0(x-x_0)+y_0)$ to be odd. This last condition is not always fulfilled. For this, we introduce $\alpha$ a root of $S(x,z_0(x-x_0)+y_0)$, and consider the transformation $x \rightarrow \tfrac{1}{x}+\alpha$. Infinity is then a ramification point, and thus the new $S$ will be of odd degree. This transformation is a Moebius transformation, and thus does not change the torsion order of the divisor.

\begin{defi}
Consider an algebraic number $\alpha$, a field extension $\mathbb{L}$ of $\mathbb{Q}[\alpha]$. The trace of $\beta\in\mathbb{L}$ is
$$\hbox{tr}(\beta)= \frac{1}{\sharp \hbox{Gal}(\mathbb{L}:\mathbb{Q})}\sum\limits_{\sigma \in \hbox{Gal}(\mathbb{L}:\mathbb{Q})} \sigma(\beta)$$
The $\alpha$-trace of $\beta$ is  
$$\hbox{tr}_{\alpha}(\beta)= \frac{1}{\sharp \hbox{Gal}(\mathbb{L}:\mathbb{Q}[\alpha])} \hbox{coeff}_{\alpha}\left(\sum\limits_{\sigma \in \hbox{Gal}(\mathbb{L}:\mathbb{Q}(\alpha))} \sigma(\beta)\right)$$
where an element of $\mathbb{Q}[\alpha]$ is represented uniquely by a polynomial of degree less than the minimal polynomial of $\alpha$ over $\mathbb{Q}$.
\end{defi}

The trace and $\alpha$-trace are $\mathbb{Q}$ linear functions on $\mathbb{L}$. By convention, if $\alpha\in\mathbb{Q}$ the $\alpha$-trace will be the trace. Given $\beta \in \mathbb{L}$ with $P\in \mathbb{Q}[\alpha][x]$ unitary its minimal polynomial, the trace of $\beta$ over $\mathbb{Q}[\alpha]$ is minus the second coefficient of $P$ divided by $\deg P$. And thus the $\alpha$-trace is then the coefficient in $\alpha$ of this trace over $\mathbb{Q}[\alpha]$.\\

\noindent\underline{\sf ReduceDarbouxian}\\
\textsf{Input:} Vector field $X=(A,B),\; A,B\in\mathbb{Q}[x,y],\; A\wedge B=1$, and $k\in\mathbb{N}^*,\; F^k\in\mathbb{Q}(x,y)$ corresponding to a $k$-Darbouxian first integral of $X$.\\
\textsf{Output:} A rational first integral, or ``None''
\begin{enumerate}
\item Compute $E=\hbox{\underline{\sf HermiteReduction}}(-\frac{B}{A} F dx+ F dy)$. If $E\neq$ ``None'', non constant and $D(E)=0$ return $E^k$.
\item Note $(x_0,y_0)$ a non singular point of $-\frac{B}{A} F dx+ F dy$, and
$$\frac{\tilde{P}}{\tilde{Q}\tilde{S}^{1/k}}=\left(1-z\frac{B}{A}(x,z(x-x_0)+y_0)\right)F(x,z(x-x_0)+y_0)$$
with root multiplicities of $\tilde{S}$ in $x$ $<k$.
\item If $k=1$, do
\begin{enumerate}
\item If $E$ non constant return ``None''.
\item Compute $R=\hbox{resultant}_x(\tilde{P}-\lambda \partial_x\tilde{Q},\tilde{Q})\in\mathbb{Q}(z)[\lambda]$.
\item Note $\alpha$ a root of $R$, and factorize $Q=Q_1\dots Q_l$ in $\mathbb{Q}(\alpha)(z)[x]$.
\item For $i=1\dots l$, compute and assign
\begin{align*}
t_i:= & \hbox{resultant}_x(\tilde{P}-\lambda \partial_x\tilde{Q},Q_i)\in \mathbb{Q}[\alpha](z)[\lambda]\\
t_i:= & -(\hbox{leading coefficient of } t_i) /(\deg_\lambda t_i \times \hbox{second leading coefficient of } t_i)\\
t_i:= & \hbox{coefficient in }\alpha \hbox{ of } t_i
\end{align*}
\item Note
$$H=\left(\prod\limits_{i=1}^l Q_i^{d t_i}\right)_{\mid z=\frac{y-y_0}{x-x_0}} \quad \hbox{ where } d=\hbox{lcm}(\hbox{den}(t_i)_{i=1\dots l}).$$
If $D(H)=0$, return $H$, else return ``None''.
\end{enumerate}
\item Note $-\frac{B}{A} F dx+ F dy=(P_1dx+P_2dy)/(QS^{1/k})$. Search a non zero solution of the linear system
$$D\left(\frac{T}{\hbox{sf}(Q)}\right)+A\partial_y\left(\frac{B}{A}\right) \frac{T}{\hbox{sf}(Q)}=0$$
for $T\in\mathbb{Q}[x,y],\; \deg T < \deg \hbox{sf}(Q)$. If one found, return \underline{\sf ReduceDarbouxian}$(X,\frac{T}{\hbox{sf}(Q)})$.
\item Search for rational first integrals of numerator and denominator degree $\leq \max(0,6 (\deg \hbox{sf}(SQ)-1))$, if one found return it.
\item If $k\geq 3$, search a $1$-Darbouxian first integrals up to degree $\deg \hbox{sf}(Q)+(2\deg \hbox{sf}(S)+3\deg \hbox{sf}(Q))/(2k-4)$. If one found, note $\tilde{F}$ its $y$ derivative, and return \underline{\sf ReduceDarbouxian}$(X,\tilde{F})$.
\item If $k=2,3,4,6$ and $\tilde{Q}$ is constant and $\deg_x \tilde{P} -\deg_x \tilde{Q}-\tfrac{1}{k}\deg_x \tilde{S}<-1$, return ``Not handled''.
\item If $k\geq 3$, return ``None'', else $k=2$ and do
\begin{enumerate}
\item If $E=$``None'' or non constant, return ``None''.
\item Then $E$ is constant, compute
$$R(\lambda)=\hbox{resultant}_x(\tilde{P}^2-\lambda^2 (\partial_x \tilde{Q})^2 S,\tilde{Q})\left(\lambda^2-\left(\lim_{x=\infty} \frac{x\tilde{P}}{\tilde{Q}\tilde{S}^{1/2}}\right)^2 \right) \in\mathbb{Q}[\lambda]$$
\item If there exists $d,\; d^2\in\mathbb{Q}^*$ such that all solutions of $R$ are of the form $n_id,\; n_i\in\mathbb{Z}$ then build the divisor
$$\mathcal{D}_z= \sum_x \frac{1}{d}\hbox{res}_x \frac{\tilde{P}}{\tilde{Q}\tilde{S}^{1/2}}$$
else return ``None''.
\item Find a $z_0$ such that $\Delta_x(\tilde{Q}\tilde{S})\neq 0$.
\item If $\deg \tilde{S}(x,z_0)$ is even, note $\alpha$ a root of $\tilde{S}(x,z_0)$ and apply the Moebius transformation $\tfrac{1}{x}+\alpha$ to $\mathcal{D}_{z_0}$ giving a divisor $\tilde{\mathcal{D}}$. Else $\tilde{\mathcal{D}}=\mathcal{D}_{z_0}$.
\item Compute $N=\hbox{\underline{\sf TorsionOrder}}(\tilde{\mathcal{D}})$. If $N=0$, return ``None''.
\item Look for a function $H=H_1+H_2\sqrt{S},\; H_1,H_2\in\mathbb{Q}(\sqrt{d})(x,y)$ such that $\hbox{div}_x(H(x,z(x-x_0)+y_0))=N \mathcal{D}_z$. If none, return ``None''.
If $H_1$ not constant and $D(H_1)=0$ return $H_1$. If $H_2^2S$ not constant and $D(H_2^2S)=0$ return $H_2^2S$. Else return ``None''.
\end{enumerate}
\end{enumerate}

\begin{proof}[Proof of Theorem \ref{thm3}]
We begin with case $k=1$, which is specifically dealt in steps $1,2,3$.

\begin{lem}\label{lem5}
For $k=1$, the algorithm \underline{\sf ReduceDarbouxian} returns a rational first integral in steps $1-3$ if and only if one exists.
\end{lem}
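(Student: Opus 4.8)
The plan is to use the classical structure theorem for primitives of rational closed $1$-forms, and to exploit the hypothesis that $X$ admits a rational first integral whenever it does, reconstructing such an integral from the logarithmic part of $\mathcal{F}$. A recurring easy point is that both possible outputs of steps $1$ and $3$ are returned only after an explicit verification, so no spurious answer is ever produced.

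Since $k=1$ we have $F\in\mathbb{Q}(x,y)$, so $\omega=-\tfrac{B}{A}Fdx+Fdy$ is a rational closed $1$-form and its primitive is elementary: $\mathcal{F}=F_0+\sum_j\lambda_j\ln F_j$ with $F_0\in\overline{\mathbb{Q}}(x,y)$, the $F_j\in\overline{\mathbb{Q}}[x,y]$ irreducible and $\lambda_j\in\overline{\mathbb{Q}}^*$. The logarithmic part $\sum_j\lambda_j\,d\ln F_j$ is reduced in the sense of the definition preceding \underline{\sf HermiteReduction} (simple poles, numerator degree at most $\deg(\prod_jF_j)-1$), and since for a rational $h$ the form $dh$ is reduced only when $h$ is constant, the Hermite reduction is unique up to an additive constant; hence the function $E$ produced by step $1$ equals $F_0$ up to a constant. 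Also $D(\mathcal{F})=0$ gives $\sum_j\lambda_jD(F_j)/F_j=0$, so reducing modulo each irreducible $F_j$ shows every $F_j$ is a Darboux polynomial of $X$. This settles the ``only if'' direction: $E$ is returned by step $1$ only after the tests $D(E)=0$ and $E$ non constant, and $H$ is returned by step $3(e)$ only after $D(H)=0$ and $H$ non constant, so a returned function is always a genuine rational first integral.

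For the ``if'' direction, assume $X$ has a rational first integral, taken indecomposable and called $J$. Then $\mathcal{F}=f(J)$, and $\partial_y$ gives $f'(J)=F/\partial_yJ$, which is a function of $J$, hence a rational function of $J$ by indecomposability; so $f'\in\overline{\mathbb{Q}}(z)$ and $f(z)=f_0(z)+\sum_i\mu_i\ln(z-a_i)$ with $f_0\in\overline{\mathbb{Q}}(z)$. If $f_0$ is non constant, then $E=f_0(J)$ up to a constant, a non constant rational function with $D(E)=0$, and step $1$ returns it. Otherwise $f_0$ is constant, $E$ is constant, step $3(a)$ does not abort, and $\mathcal{F}=c+\sum_i\mu_i\ln(J-a_i)$; since $f'$ has residue sum zero on $\mathbb{P}^1$, at least two of the curves $\{\hbox{num}(J-a_i)=0\}$ and $\{\hbox{den}(J)=0\}$ occur, so this logarithmic part is non trivial, $Q$ is non constant, and its irreducible factors are exactly those curves, each carrying the corresponding $\mu_i$ as coefficient.

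It remains to see that steps $2$--$3$ recover a rational first integral in this last case. Restricting $\omega$ to a line $y=z(x-x_0)+y_0$ through a non singular point with generic slope, the line is transversal to every $F_j$ and avoids $\Delta_x$ of the denominator, so the $x$-derivative of the restriction of $\mathcal{F}$ has, along each $F_j$, residue equal to $\lambda_j$; these residues are the roots of the resultant $R$ computed in step $3(b)$. Choosing a root $\alpha$ of $R$ and factoring $Q$ over $\mathbb{Q}(\alpha)(z)$ separates the $F_j$ into Galois orbits over $\mathbb{Q}$, and the $\alpha$-trace prescription of step $3(d)$ attaches to each factor $Q_i$ the rational number $t_i$; after clearing denominators by $d=\hbox{lcm}(\hbox{den}(t_i))$ and substituting $z=(y-y_0)/(x-x_0)$, the resulting $H=\prod_iQ_i^{dt_i}$ is, up to a constant power, a non constant rational function of $J$ with coefficients in $\mathbb{Q}$, so $D(H)=0$ and step $3(e)$ returns it. The main obstacle is exactly this last verification: showing that the $\alpha$-trace bookkeeping yields an $H$ that is simultaneously in $\mathbb{Q}(x,y)$, non constant, and a first integral, when the residues $\mu_i$ and the special values $a_i$ may be irrational and mutually incommensurable, while correctly accounting for factors of $Q$ depending on a single variable and for the genericity of the auxiliary line.
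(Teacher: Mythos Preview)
Your sketch correctly sets up the structure (uniqueness of the Hermite reduction for $k=1$, the identification $E=f_0(J)$, and the ``only if'' direction via the explicit verification built into steps~1 and~3(e)). But, as you explicitly concede in your final sentence, the heart of the matter is left open: you assert that $H=\prod_iQ_i^{dt_i}$ is ``a non constant rational function of $J$'' without proving it. Knowing that each irreducible factor $F_j$ is a Darboux polynomial is not enough; a product of powers of Darboux polynomials need not be a first integral unless the cofactors cancel, and nothing in your argument forces that for the particular exponents $dt_i$ produced by the $\alpha$-trace.

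The paper closes this gap with a $\mathbb{Q}$-linear independence argument that you are missing. One writes $\mathcal{F}=\sum_i\lambda_i\ln F_i$ and $f(z)=\sum_j\mu_j\ln f_j(z)$ with the $\lambda_i$ and the $\mu_j$ each chosen $\mathbb{Q}$-linearly independent. The identity $\mathcal{F}=f(J)$ then forces $p=q$ and the existence of a matrix $M\in M_p(\mathbb{Q})$ with $F_j=\prod_i f_i(J)^{M_{i,j}}$; hence \emph{each $F_j$ is already a rational function of $J$}, so a first integral in its own right. Writing the $F_j$ over a common set of coprime irreducible polynomials $P_i$ via $F_j=\prod_iP_i^{A_{i,j}}$, the residues $\nu_i$ satisfy the linear relation $\nu A=\lambda$, and the $\alpha$-trace (being $\mathbb{Q}$-linear) preserves it: $\hbox{tr}_\alpha(\nu)A=\hbox{tr}_\alpha(\lambda)$. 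Consequently $H=\prod_iP_i^{\,d\,\hbox{tr}_\alpha(\nu_i)}$ equals $\prod_jF_j^{\,d\,\hbox{tr}_\alpha(\lambda_j)}$, a product of first integrals with integer exponents, hence a first integral. Non-constancy is secured by choosing $\alpha=\nu_1$, so that $\hbox{tr}_\alpha(\nu_1)=1$ and the factor $P_1^d$ cannot be cancelled by the other coprime $P_i$. This dual representation of $H$---once through the $P_i$ to get non-constancy, once through the $F_j$ to get $D(H)=0$---is exactly the verification you flagged as the main obstacle.
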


\begin{proof}[Proof of the Lemma]
If $X$ admits a rational first integral, then
\begin{equation}\label{eq12}
\int -\frac{B}{A} F dx+ F dy= E(x,y)+\sum\limits_{i=1}^p \lambda_i \ln(F_i(x,y))=f(J(x,y)).
\end{equation}
Moreover, we can assume the $\lambda_i$ $\mathbb{Q}$ independent and note $\mathbb{L}=\mathbb{Q}(\lambda_1,\dots,\lambda_p)$. Differentiating both sides in $y$, we have
$$F(x,y)=\partial_y J(x,y) f'(J(x,y)) \;\Rightarrow f'(J(x,y))=\frac{F(x,y)}{\partial_y J(x,y)}$$
The right hand side is a rational function $\in\mathbb{C}(x,y)$, and is a function of $J$. As we can assume $J$ to be indecomposable, then the right hand side is a rational function of $J$. Thus $f'(z)\in\mathbb{C}(z)$ and so $f$ can be written
$$f(z)=g(z)+\sum\limits_{i=1}^q \mu_i \ln(f_i(z))$$
with $\mu_i$ $\mathbb{Q}$ independent. Now substituting $z=J(x,y)$ in this expression, we obtain $E(x,y)=g(J(x,y))$. Thus if $E$ is not constant, it should be itself a rational first integral. This case is tested in step $1$.
Else $E$ and $g$ are constant, and as the integrals are defined up to a constant, we can assume $E=0, g=0$.\\
Now the expression of the logarithmic part of \eqref{eq12} is not unique, as we can make a $\mathbb{Q}$-basis change of the $\lambda_i$. This implies that $p=q$ and there exists a matrix $M\in M_p(\mathbb{Q})$ such that
\begin{equation}\label{eq13}
\prod\limits_{i=1}^p f_j(J(x,y))^{M_{i,j}}= F_j(x,y).
\end{equation}
Thus all $F_j$ are function of $J$, and so should be first integrals. Let us consider polynomials $P_i\in\mathbb{L}[x,y]$ coprime and such that $F_j=\prod_{i=1}^l P_i^{A_{i,j}}$ with $A\in M_{l,p}(\mathbb{Q})$. Remark that we have the equivalence
$$\sum\limits_{i=1}^l \nu_i \ln P_i = \sum\limits_{i=1}^p \lambda_i \ln F_i \Leftrightarrow \nu A= \lambda$$
Let us apply the $\alpha$-trace to the right equality which gives $\hbox{tr}_\alpha (\nu) A =\hbox{tr}_\alpha (\lambda)$. Thus we also have
$$\sum\limits_{i=1}^l \hbox{tr}_\alpha(\nu_i) \ln P_i = \sum\limits_{i=1}^p \hbox{tr}_\alpha(\lambda_i) \ln F_i$$
We now choose for $\alpha=\nu_1$. Because of this choice, we know that $\hbox{tr}_\alpha(\nu)$ is not completely zero. Let us note $d$ the lcm of the denominators of the $\hbox{tr}_\alpha(\nu_i)$. As the $P_i$ are coprime and $\hbox{tr}_\alpha(\nu_1)=1$, we have that
$$H=\prod\limits_{i=1}^l P_i^{d\hbox{tr}_\alpha(\nu_i)}$$
is not constant (a factor of $P_1$ cannot be compensated in another $P_i$ as it cannot appear in another $P_i$). As it can also be written
$$H=\prod\limits_{i=1}^p F_i^{d\hbox{tr}_\alpha(\lambda_i)}$$
it is a rational first integral. This rational function is computed in steps $3(b)-(e)$. If this function $H$ happens not to be a first integral, then equation \eqref{eq12} does not hold and thus $X$ does not admit a rational first integral.
\end{proof}

Now case $k=1$ is done by Lemma \ref{lem5} and so we can assume $k\geq 2$. We write down the integral
$$\mathcal{F}(x,y)= \int \frac{P_1dx+P_2dy}{Q S^{1/k}}$$
with $P_1,P_2,Q,S\in\mathbb{Q}[x,y]$ and $k\in \mathbb{N}^*$ minimal. We can also assume that multiplicities of roots of $S$ are $<k$. If the system admits a rational first integral, then $\mathcal{F}(x,y)=f(J(x,y))$. Differentiating this relation, we get
$$\frac{P_1dx+P_2dy}{Q S^{1/k}}= f'(J(x,y)) dJ(x,y)$$
Thus $f'(J(x,y))^k$ is a rational function $G\in \mathbb{C}(x,y)$, and this $G$ is a function of $J(x,y)$ only. As we can assume that $J$ is indecomposable, then $G$ is a rational function of $J(x,y)$. Thus $f'$ is a $k$-th root of a rational function, and we will note
$$f(z)=\int \frac{\tilde{P}(z)}{\tilde{Q}(z)(\tilde{S}(z))^{1/k}} dz$$
with $\tilde{P},\tilde{Q},\tilde{S}$ polynomials. Remark that we know that the radical $(\tilde{S}(z))^{1/k}$ cannot simplified, as $J(x,y)$ is rational and the $k$-th root in the expression of $\mathcal{F}$ can only come from it. We can also assume $\tilde{S}$ has root multiplicities $<k$, and $\tilde{P}\wedge \tilde{Q}=1$. Applying a Moebius transformation, we can also assume that $\tilde{P}(z)/(\tilde{Q}(z)(\tilde{S}(z))^{1/k})$ is smooth at infinity, i.e. $\deg \tilde{P}-\deg \tilde{Q}-k^{-1} \deg \tilde{S}\leq -2$ and $k \mid \deg \tilde{S}$. We will now split our analysis depending on the number of roots of $\tilde{Q}$ and the genus $g$ of $z^k-\tilde{S}(x)$.\\

\textbf{When $\tilde{Q}$ has at least two distinct roots}. If $\tilde{Q}$ has two distinct roots $z_1,z_2$, then $f$ has a logarithmic singularity or a pole at $z_1,z_2$. And after composition by $J$, $\mathcal{F}$ has then logarithmic singularities or poles along the levels $F-z_1,F-z_2$. Applying a Moebius transformation to $J$ sending $z_1 \rightarrow 0, z_2 \rightarrow \infty$ gives another rational first integral $\tilde{J}$, whose numerator and denominator have factors which divide $Q$. Thus $\partial_y \ln J= R/\hbox{sf}(Q)$ with $\deg R<\deg \hbox{sf}(Q)$. This $1$-Darbouxian integral would be found in step $4$. Then a recursive call is made to \underline{\sf ReduceDarbouxian} on $T/V$, which is the $y$ partial derivative of the $1$-Darbouxian integral we found.

\begin{lem}\label{lem6b}
A vector field $X$ possessing a $k$-Darbouxian first integral (with $k\neq 1$) and a $1$-Darbouxian first integral possesses a rational first integral.
\end{lem}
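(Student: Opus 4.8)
The plan is to exploit the elementary fact that, for \emph{any} first integral $\mathcal G$ of $X$, the function $\partial_y\mathcal G$ obeys a fixed first order linear equation along $D$ whose coefficient depends only on $X$. Differentiating $D(\mathcal G)=A\partial_x\mathcal G+B\partial_y\mathcal G=0$ in $y$ and substituting $\partial_x\mathcal G=-\tfrac BA\,\partial_y\mathcal G$ yields
$$D(\partial_y\mathcal G)=-A\,\partial_y\!\left(\tfrac BA\right)\partial_y\mathcal G,$$
so the rational function $G_0:=-A\,\partial_y(B/A)$ is the common logarithmic derivative along $D$ of $\partial_y\mathcal G$ for every first integral $\mathcal G$ (this is a one line variant of the computation recalled at the start of the Liouvillian section, where $\partial_x(\partial_y\mathcal F)/\partial_y\mathcal F$ is computed).

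I would then apply this to the two given first integrals. Let $J_1$ be the $1$-Darbouxian first integral, so $F_1:=\partial_y J_1\in\mathbb Q(x,y)$ and $F_1\not\equiv 0$, and let $\mathcal F$ be the $k$-Darbouxian one, $F:=\partial_y\mathcal F$ with $F^k\in\mathbb Q(x,y)$; take $k$ minimal, so that since $k\neq 1$ we have $F\notin\overline{\mathbb Q}(x,y)$. By the previous step $D(F)=G_0F$ and $D(F_1)=G_0F_1$, hence
$$D\!\left(\frac{F^{k}}{F_1^{k}}\right)=k\left(\frac{F}{F_1}\right)^{k-1}\frac{D(F)F_1-F\,D(F_1)}{F_1^{2}}=0 .$$
Put $G:=F^{k}/F_1^{k}$. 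Then $G\in\mathbb Q(x,y)$ because $F^k$ and $F_1^k$ are, and $D(G)=0$; it only remains to check that $G$ is not constant.

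For this, suppose $G$ equals a constant $c$. Being a constant quotient of elements of $\mathbb Q(x,y)$, $c$ lies in $\mathbb Q$, so $F=d\,F_1$ for some $k$-th root $d\in\overline{\mathbb Q}$ of $c$, and therefore $F=d\,F_1\in\overline{\mathbb Q}(x,y)$. But then $\mathcal F$ is a first integral with $\partial_y\mathcal F$ rational, i.e.\ a $1$-Darbouxian first integral, contradicting the minimality of $k\geq 2$. Hence $G=F^{k}/F_1^{k}$ is a non-constant element of $\mathbb Q(x,y)$ annihilated by $D$, that is, a rational first integral of $X$, which is the assertion.

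The only delicate point is this last paragraph, and it is delicate only in that ``$k$-Darbouxian with $k\neq 1$'' must be read in the strong sense that $\partial_y\mathcal F$ is genuinely non-rational; otherwise the degenerate case $F\in\overline{\mathbb Q}(x,y)$ is not excluded and $G$ could be constant. Everything else is immediate once the shared identity $D(\partial_y\mathcal G)=G_0\,\partial_y\mathcal G$ is in hand. (An equivalent route is via functional dependence: writing $\mathcal F=f(J_1)$ for a one variable $f$ gives $f'(J_1)=F/F_1$, so $f'(J_1)^{k}=F^{k}/F_1^{k}$ is simultaneously rational and a function of $J_1$, hence a rational first integral; the direct computation above has the advantage of not invoking indecomposability of $J_1$.)
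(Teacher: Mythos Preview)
Your proof is correct and takes a genuinely different, more direct route than the paper. The paper argues by contradiction: writing $\mathcal F=g\!\left(\int Udx+Vdy\right)$, it differentiates, restricts to a level curve $\mathcal C_h$ of $g'(J_1)$, deduces $g'\equiv h$ on an open set so that $g$ is affine, and then invokes the Galois action of the $k$-th root to reach a contradiction. You instead exhibit the rational first integral explicitly as $G=F^{k}/F_1^{k}$, using the clean observation that $\partial_y$ of \emph{any} first integral satisfies the same linear relation $D(\cdot)=-A\,\partial_y(B/A)\,(\cdot)$ along $D$ (equivalently, closedness of the associated $1$-form). This bypasses the level-curve maneuver and the Galois argument entirely and is constructive. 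Your caveat is exactly right and applies equally to the paper's proof: the non-constancy of $G$ (respectively, the non-triviality of the Galois action in the paper) requires $F\notin\overline{\mathbb Q}(x,y)$, i.e.\ that $k$ is minimal; the paper implicitly assumes this since in the surrounding proof of Theorem~\ref{thm3} one has already set $k\geq 2$ minimal. Note also that your $G$ coincides with the quantity $(P_2/(QVS^{1/k}))^{k}$ appearing in the paper's argument, so the two proofs produce the same rational first integral; yours simply reaches it without the detour.
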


\begin{proof}
For the $k$-Darbouxian first integral $\mathcal{F}$, we can write
$$\int \frac{P_1dx+P_2dy}{Q S^{1/k}}=g\left(\int Udx+Vdy \right)$$
for a univariate $g$. Differentiating both sides gives
$$\frac{P_1dx+P_2dy}{Q S^{1/k}}=g'\left(\int Udx+Vdy \right)(Udx+Vdy)$$
$$\frac{P_1}{QUS^{1/k}}=g'\left(\int Udx+Vdy \right),\quad \frac{P_2}{QVS^{1/k}}=g'\left(\int Udx+Vdy \right)$$
At least one of the two terms
$$\frac{P_1}{QUS^{1/k}},\frac{P_2}{QVS^{1/k}}$$
is non constant (as to be constant, they should be zero, and if both are zero then $\mathcal{F}$ is constant). Note $\mathcal{C}_h$ the $h$ level of this function, and up to exchange of variables, we can assume the first one.
Let us now assume that no rational first integral exists. Then $\int Udx+Vdy$ is non constant restricted to $\mathcal{C}_h$ for a generic $h$. Thus $g'(z)=h$ for $z$ on some open set, and thus $g$ is affine, and so we can write
$$\int \frac{P_1dx+P_2dy}{Q S^{1/k}}=a\left(\int Udx+Vdy\right)+b.$$
The Galois action of the $k$-th root on the left multiplies the left part by a $k$-th root of unity, and lets invariant the right part, thus $a=0$, which implies that $\mathcal{F}$ is constant. This is a contradiction, and thus a rational first integral exists.
\end{proof}

Using Lemma \ref{lem6b}, the recursive call will be made on a vector field possessing a $k$-Darbouxian first integral (with $k\neq 1$) and a $1$-Darbouxian first integral, and thus Lemma \ref{lem5} applies, and the recursive call terminates in steps $1-3$. Steps $1-3$ will find the rational first integral.\\

\textbf{When $g$ is at least $2$}. We have the relation
$$\frac{P_1dx+P_2dy}{Q S^{1/k}}= \frac{\tilde{P}(J(x,y)) dJ(x,y)}{\tilde{Q}(J(x,y))(\tilde{S}(J(x,y)))^{1/k}}$$
thus $(\tilde{S}(J(x,y)))^{1/k} \in S^{1/k}\mathbb{C}(x,y)$. This can be rewritten as the equation
$$SV^k=\tilde{S}(J),\quad V,J\in\mathbb{C}(x,y).$$
We now restrict ourselves to a generic straight line $y=z(x-x_0)+y_0$. We can build the rational application
\begin{align}
\varphi_z: & \{(x,w)\in\mathbb{C}^2,w^k=S(x,z(x-x_0)+y_0)\} \rightarrow \{(x,w)\in\mathbb{C}^2,w^k=\tilde{S}(x)\}\\
           & (x,w) \rightarrow (V(x,z(x-x_0)+y_0)w,J(x,z(x-x_0)+y_0))
\end{align}
Using the Riemann Hurwitz formula \cite{oort2016riemann}, we have that
$$2g'-2\geq \deg \varphi_z  (2g-2)$$
where the degree of $\varphi_z$ is the generic number of points of a fiber of $\varphi_z$ and $g'$ is the genus of $w^k=S(x,z(x-x_0)+y_0)$ for a generic $z$. Applying Lemma \ref{lem2}, we know that $g\geq k/12+1$. Using the genus formula for a superelliptic curve, we also have $g'\leq \frac{1}{2} ((k-1)(\deg_x \hbox{sf}(S)(x,z(x-x_0)+y_0)+1)-2k)+1$. Thus
$$\deg_x \varphi_z \leq \frac{((k-1)(\deg_x \hbox{sf}(S)(x,z(x-x_0)+y_0)+1)-2k)}{k/6} \leq 6 (\deg_x \hbox{sf}(S)(x,z(x-x_0)+y_0)-1)$$
Given $J(x,z(x-x_0)+y_0)$, there are at most $\max(\deg_x \hbox{num}(J(x,z(x-x_0)+y_0)),\deg_x \hbox{den}(J(x,z(x-x_0)+y_0)))$ solutions for $x$, and then $w$ can be uniquely recovered. Thus
$$\max(\deg_x \hbox{num}(J(x,z(x-x_0)+y_0)),\deg_x \hbox{den}(J(x,z(x-x_0)+y_0))) \leq 6 (\deg_x \hbox{sf}(S)(x,z(x-x_0)+y_0)-1).$$
Now for a generic $z$, numerator and denominator of $J(x,z(x-x_0)+y_0)$ will not simplify and the dominant term will not disappear, thus
$$\deg_x \hbox{num}(J(x,z(x-x_0)+y_0))= \deg \hbox{num}(J),\quad \deg_x \hbox{den}(J(x,z(x-x_0)+y_0))=\deg \hbox{den}(J),$$
$$\deg_x \hbox{sf}(S)(x,z(x-x_0)+y_0)=\deg \hbox{sf}(S)$$
and we get $\max( \deg \hbox{num}(J),\deg \hbox{den}(J)) \leq 6 (\deg \hbox{sf}(S)-1)$. This rational first integral would be found in step $5$.\\

\textbf{When $g=0$}. Using Lemma \ref{lem3}, the only possible expression for $f$ up to Moebius transformation would be
$$f(z)=\int \frac{\tilde{P}(z)}{\tilde{Q}(z) z^{l/k}} dz, l=1\dots k-1,\; l\wedge k=1$$
If $\tilde{Q}$ has at least two distinct roots, then the first case applies, so we can assume it has at most one, and up to Moebius transformation, we can put it at $1$
$$f(z)=\int \frac{\tilde{P}(z)}{(z-1)^{\tau} z^{l/k}} dz.$$
Now if $\deg \tilde{P} \geq \tau$, $f$ has a pole at infinity. For $\tau=0$, we would get then integral
$$f(z)=\int \frac{\tilde{P}(z)}{z^{l/k}} dz \in \mathbb{C}(z^{1/k})$$
Thus $f(J)$ would be algebraic, and would then equal the Hermite reduction $E$ in step $1$. Then $E^k$ would indeed be a rational first integral returned in step $1$.

When $\tau>0$, up to Moebius transformation, the previous case (two singularities) would apply. So the only possibility left is
$$f(z)=\int \frac{\tilde{P}(z)}{(z-1)^\tau z^{l/k}} dz,\; \tau \in\mathbb{N}^*, \deg P \leq \tau -1, l=1\dots k-1,\; l\wedge k=1.$$
Noting $J=R/U+1$, the curve $R=0$ will be a pole of $f(J(x,y))$, and thus its factors will divide $\hbox{sf}(Q)$ and so its square free part will have degree less than $\deg \hbox{sf}(Q)$. Differentiating the relation $\mathcal{F}(x,y)=f(J(x,y))$, we find that there exists $V\in\mathbb{C}(x,y)$ such that $SV^k=J^l$. Thus
$$ SV^k=\left(\frac{R}{U}+1\right)^l$$
$$ S(VU^l)^k=(R+U)^lU^{(k-1)l}$$
$$ SW^k=R(R+U)^{l-1}U^{(k-1)l}+(R+U)^{l-1}U^{(k-1)l+1}  ,\;\hbox{ noting } W=VU^l\in\mathbb{C}[x,y]$$
Let us note
$$\deg W=w,\;\deg R=r,\; \deg U=u,\; \deg \hbox{sf}(Q)=q,\; \deg \hbox{sf}(S)=s.$$
Let us first consider the case $k\geq 3$. We have that
$$\hbox{sf}(SW^kR(R+U)^{l-1}U^{(k-1)l+1})=\hbox{sf}(SWRU)$$
as $(R+U)^{l-1}$ should be a factor of $SW^k$. Applying Mason's $ABC$ theorem \cite{mason1984diophantine}, we have
\begin{equation}\label{eqmas1}
\max(s+kw,r+(k-1)u,ku)\leq s+w+q+u 
\end{equation}
remembering that the factors of $R$ are factors of $Q$, and that $l\geq 1$. Adding these $3$ inequalities, we obtain
$$s+kw+r+(2k-1)u\leq 3(s+w+q+u)$$
$$\Leftrightarrow \;\;\; (k-3)w+(2k-4)u\leq 2s+3q$$
$$ \Leftrightarrow u\leq \frac{2s+3q}{2k-4}. $$
We have that $U/R$ is a rational first integral, and as factors of $R$ are factors of $Q$ then $\partial_y \ln (U/R)$ will be of degree at most $\frac{2s+3q}{2k-4}+q$. This $1$-Darbouxian first integral would be detected in step $6$, and then the recursive call would terminate in step $3$ according to Lemma \ref{lem5} and return a rational first integral according to Lemma \ref{lem6b}.\\

We now consider the case $k=2$, and thus $l=1$. The integral 
$$f(z)=\int \frac{\tilde{P}(z)}{(z-1)^{\tau} \sqrt{z}} dz=\tilde{G}+\sum_{i=1}^l \lambda_i \ln \tilde{F}_i(\sqrt{z})$$
is always elementary, and thus $\mathcal{F}$ should be elementary. Thus $\mathcal{F}$ can also be written
$$\mathcal{F}(x,y)=E(x,y)+\sum_{i=1}^{l'} \lambda'_i \ln F_i(x,y)$$
As $\mathcal{F}(x,y)=f(J(x,y))$, by identification we have $\tilde{G}(J(x,y))=E(x,y)$, and if $E$ is non constant, then $E^k$ should be a rational first integral, which would have been detected in step $1$. So if $E$ does not exist or is non constant and not a first integral, we cannot be in this case. As it is the last one studied by the algorithm, this would imply that no rational first integral exists. If $E$ is constant, as $\mathcal{F}$ is defined up to addition of a constant, we can then assume $E=0$. We also then have that function $f$ has no poles of order $\geq 2$, and thus $\tau=1$ and so
$$f(z)=\int \frac{d}{(z-1) \sqrt{z}} dz=a \ln\left( \frac{1-\sqrt{z}}{1+\sqrt{z}} \right),\; a\in\mathbb{C}^*$$
Thus after composition by $J$, we face the problem \eqref{eqtor}. The residues of $\omega$ restricted to the line $y=z(x-x_0)+y_0$ are computed in step 8(b) where $R$ is a polynomial whose roots in $\lambda$ are the residues. Using Lemma \ref{lemtor}, we know that polynomial $R$ should have roots in $d\mathbb{Z}$ for some common $d$ with $d^2\in\mathbb{Q}$, and this is tested in step 8(c). We then build the divisor $\mathcal{D}_z$, and using Lemma \ref{lemtor}, we know that $\mathcal{D}_z$ is of torsion and that $\mathcal{D}_{z_0}$ is also a torsion divisor for the $z_0$ chosen in step 8(d). In step 8(e),8(f), we compute a torsion order candidate for $\mathcal{D}_{z_0}$. Step 8(e) is necessary to apply the algorithm \underline{TorsionOrder} from \cite{combot2021elementary} , and such Moebius transformation does not change the torsion order. If the candidate found is $N=0$, then $\mathcal{D}_{z_0}$ is not of torsion, and thus $\mathcal{D}_z$ is neither and thus $\int\omega$ is not elementary, and thus there are no rational first integrals. Else, using Lemma \ref{lemtor}, we know that this $N$ is the unique possible candidate for the torsion order of $\mathcal{D}_z$. In step 8(g), we look for a rational function on the curve, which can be written $H_1+H_2\sqrt{S}$ whose divisor is $N\mathcal{D}_z$. If none is found, then $\mathcal{D}_z$ is not of torsion and thus there are no rational first integrals. Else we know that $\int \omega- \frac{d}{N}\ln(H_1+\sqrt{S}H_2)$ is an integral of the first kind (as there are no singularity on the surface $w^2=S(x,y)$), and thus $\int \omega$ is elementary if and only if this first kind integral is zero. Then $H_1+H_2\sqrt{S}$ would be an algebraic first integral and thus $H_1,H_2^2S$ would be rational first integrals. At least one of them is not constant, and thus should give a non constant rational first integral. If not, then $\int \omega$ was not elementary, and thus there was no rational first integral.\\

\textbf{When $g=1$ and $\tilde{Q}$ has one root}. By Moebius transformation, we can send this pole to infinity. Now either this is a ramification point (which will give elliptic integrals of the second kind), or infinity is not ramified. Using Lemma \ref{lem3}, we have the superelliptic curves of genus $1$ up to Moebius transformation, and this gives for infinity ramified the possibilities for $f$
$$\int \frac{\tilde{P}(z)}{ \sqrt{z(z-1)(z-a)}} dz,\;\; \int \frac{\tilde{P}(z)}{ (z(z-1))^{1/3}} dz,\;\; \int \frac{\tilde{P}(z)}{ (z(z-1))^{2/3}} dz$$
$$\int \frac{\tilde{P}(z)}{ (z^2(z-1))^{1/4}} dz,\;\; \int \frac{\tilde{P}(z)}{ (z^2(z-1)^3)^{1/4}} dz,\;\; \int \frac{\tilde{P}(z)}{ (z^3(z-1)^2)^{1/6}} dz,\;\; \int \frac{\tilde{P}(z)}{ (z^3(z-1)^4)^{1/6}} dz$$
and for infinity not ramified
$$\int \frac{\tilde{P}(z)}{ \sqrt{z(z-1)(z-a)(z-b)}} dz,\;\; \int \frac{\tilde{P}(z)}{ (z(z-1)(z-a))^{1/3}} dz,\;\; \int \frac{\tilde{P}(z)}{ (z(z-1)(z-a))^{2/3}} dz$$
$$\int \frac{\tilde{P}(z)}{ (z^2(z-1)(z-a))^{1/4}} dz,\;\; \int \frac{\tilde{P}(z)}{ (z^2(z-1)^3(z-a)^3)^{1/4}} dz,$$
$$\int \frac{\tilde{P}(z)}{ (z^3(z-1)^2(z-a))^{1/6}} dz,\;\; \int \frac{\tilde{P}(z)}{ (z^3(z-1)^4(z-a)^5)^{1/6}} dz.$$
Now composing $f$ with $J$, the poles of $J$ will become singular curves for $f$ (poles in the second kind case, logarithmic singularities in the third kind case). Thus the factors of the denominator of $J$ are factors of $Q$. The function $J$ defines a morphism $\varphi_y$ from the curve $z^k-S(x,y)$ to an elliptic curve, and thus is corresponds to a rational solution of one of these Diophantine equations
$$SV^2=J(J-1)(J-a),\; SV^3=J(J-1),\; SV^3=J^2(J-1)^2,\; SV^4=J^2(J-1),$$
$$SV^4=J^2(J-1)^3,\; SV^6=J^3(J-1)^2,\; SV^6=J^3(J-1)^4,\; SV^3=J(J-1)(J-a),$$
$$SV^2=J(J-1)(J-a)(J-b),\; SV^3=J^2(J-1)^2(J-a)^2,\; SV^4=J^2(J-1)(J-a),$$
$$SV^4=J^2(J-1)^3(J-a)^3,\; SV^6=J^3(J-1)^2(J-a),\; SV^6=J^3(J-1)^4(J-a)^5.$$
We will now use Lemma \ref{lem1} to solve these equations in $\mathbb{C}[x,y,(SQ)^{-1}]$, which gives the bound
$$ \max( \deg \hbox{numer}(J),\deg \hbox{denom}(J)) \leq 6 \max(0,\deg \hbox{sf}(SQ)-1)$$
This first integral would be found in step $5$.\\

\textbf{When $g=1$ and $\tilde{Q}$ is constant}. According to Lemma \ref{lem3}, for genus $1$ we need $k\in \{2,3,4,6\}$. As $f$ has no pole on the elliptic curve, after composition by $J$, the function $\mathcal{F}$ has no pole on the surface $z^k=S(x,y)$ (even along the line at infinity). This requires that
$$\mathcal{F}(x,y)=\int \frac{Bdx-Ady}{W^{1/k}},\quad W\in\mathbb{C}[x,y],\; k^{-1}\deg W>\max(\deg A,\deg B)+1$$
and factors of $W$ have multiplicities $\leq k-1$. This is condition tested in step $7$. If this is the case, then we should search decompositions of the form
$$\int^{J(x,y)} \frac{1}{\sqrt{z(z-1)(z-a)}} dz\quad k=2,\qquad \int^{J(x,y)} \frac{1}{(z(z-1))^{2/3}} dz\quad k=3$$
$$\int^{J(x,y)} \frac{1}{(z^2(z-1)^3)^{1/4}} dz,\quad k=4,\qquad \int^{J(x,y)} \frac{1}{(z^3(z-1)^4)^{1/6}} dz,\quad k=6$$
In this situation, the algorithm \underline{ReduceDarbouxian} returns in step $7$ ``Not handled''.
\end{proof}

Remark that the returned rational first integral is not always in $\mathbb{Q}(x,y)$. First integrals returned in step $3$ and step $8(g)$ can have algebraic coefficients. Noting $\mathbb{Q}[\alpha]$ the field of the coefficients, it is however always possible to consider Newton sums
$$S_j=\sum\limits_{\sigma\in\hbox{Gal}(\mathbb{Q}[\alpha]:\mathbb{Q})} \alpha^j\sigma(J),\quad j\in\mathbb{N}^*,$$
If $J$ is a rational first integral, $\sigma(J)$ is also a rational first integral for all $\sigma\in\hbox{Gal}(\mathbb{Q}[\alpha]:\mathbb{Q})$, and thus $S_j$ is a rational first integral (possibly constant). However, if $S_j$ is constant for all $j$, then as a Vandermonde matrix on the conjugates of $\alpha$ is invertible, we would have $j$ constant which is impossible. Thus a non constant first integral $S_j \in\mathbb{Q}(x,y)$ always exists.

Remark that building this rational first integral with coefficients in $\mathbb{Q}$ can in some way be much more complicated than the one with coefficients in $\bar{\mathbb{Q}}$. Indeed, even though the degree of the minimal rational first integral is not higher than the one with coefficients in $\bar{\mathbb{Q}}$ (see \cite{cheze2014decomposition}), the factors structure can change as in the following example
$$J=\frac{(x+\sqrt{2})^3(x+y\sqrt{2})^5}{(x-\sqrt{2})^3(x-y\sqrt{2})^5}$$
The field $\mathbb{C}(J)$ can also be generated by the rational function
\begin{footnotesize}
$$\frac{(5x^6y+20x^4y^3+4x^2y^5+3x^6+60x^4y^2+60x^2y^4+30x^4y+120x^2y^3+24y^5+2x^4+40x^2y^2+40y^4)x}
{x^8+20x^6y^2+20x^4y^4+30x^6y+120x^4y^3+24x^2y^5+6x^6+120x^4y^2+120x^2y^4+20x^4y+80x^2y^3+16y^5}$$
\end{footnotesize}
This function is of the same degree, but does not look so pleasant as the first one as no high multiplicity factors appears.

\section{The resistant case}

If we arrive in the not handled case, then the first integral
$$\mathcal{F}(x,y)=\int \frac{P_1dx+P_2dy}{S^{1/k}}$$
has no pole, and we should find the possible decompositions with an elliptic integral. If $\mathcal{F}=f(J)$ where $J$ is a rational first integral, differentiating both sides gives a Diophantine equation of the form
\begin{equation}\label{eqmor}
J(J-1)(J-a)=SV^2,\quad J^2(J-1)^2=SV^3,\quad J^2(J-1)^3=SV^4,\quad J^3(J-1)^4=SV^6.
\end{equation}
These equations are elliptic equations, and the Mordell Weil Theorem \cite{silverman1994advanced} applies, so the rational solutions are finitely generated (for any fixed $a$ in the first case). If $(J,V)$ is a torsion solution, then for some $N\in\mathbb{N}^*$ we would have respectively
$$N\int^{J(x,y)} \frac{1}{\sqrt{z(z-1)(z-a)}} dz,\; N\int^{J(x,y)} \frac{1}{(z(z-1))^{2/3}} dz,$$
$$N\int^{J(x,y)} \frac{1}{(z^2(z-1)^3)^{1/4}} dz,\; N\int^{J(x,y)} \frac{1}{(z^3(z-1)^4)^{1/6}} dz$$
constant, and thus $J$ would be constant. So the torsion solutions are not interesting for us.

In the case $k=2$, a free parameter $a$ appears, which can be recovered using the $j$ invariant of the elliptic curve. For a basis $\gamma_1,\dots,\gamma_p$ of the homotopy group of the surface $w^k=S(x,y)$, we can compute the monodromy of the integral $\mathcal{F}$ along these cycles, which give complex numbers $c_1,\dots,c_p$. If a decomposition with an elliptic integral is possible, then $c_1,\dots,c_p$ should define to a lattice $\mathcal{R}$ of $\mathbb{C}$. Thus taking two non colinear $c_i,c_j$, the lattice $\tilde{\mathcal{R}}$ generated by $c_i,c_j$ will be $\subset \mathcal{R}$ and there will exist $n\in\mathbb{N}^*$ such that $\mathcal{R} \subset \tfrac{1}{n} \tilde{\mathcal{R}}$. We can thus know $\mathcal{R}$ up to a factor, and then obtain the $j$ invariant from $c_i,c_j$. This defines uniquely a $j$ invariant candidate, although in practice we can only compute it numerically because the numbers $c_i,c_j$ are not algebraic. For $k\in\{3,4,6\}$, the lattice $\mathcal{R}$ is fixed up to dilatation, and thus the $j$ invariant is known in advance, respectively $j=1728,0,1728$.

Although the Morweill Weil group is infinite, the function $f$ is a morphism which transforms the addition law of the group to the classical addition. Thus knowing a basis $(J_1,V_1),\dots,(J_r,V_r)$ of the group, we then know that $\mathcal{F}$ should be written
$$\mathcal{F}(x,y)=\sum\limits_{i=1}^r \lambda_i f(J_i).$$
Then if the $\lambda_i$ are up to a common factor in
\begin{itemize}
\item $\mathbb{Q}$ for $j\neq 0,1728$
\item $\mathbb{Q}(i)$ for $j=0$
\item $\mathbb{Q}(e^{2i\pi/3})$ for $j=1728$
\end{itemize}
we can regroup the terms with addition law (and special multiplication law in cases $j=0,1728$) to an expression $\lambda_0 f(J)$ where $J$ will thus be a rational first integral.

To compute the Mordell Weil group, the difficult point is typically to find the rank $r$ (see an example in \cite{hulek2011calculating}). Once known, an exhaustive search can find the generators, however, we typically only obtain upper bounds, which if not tight, does not allow to conclude. Let us remark that along a generic line $y=z(x-x_0)+y_0$, any solution $(J(x,z(x-x_0)+y_0),V(x,z(x-x_0)+y_0))$ defines a morphism from the superelliptic curve $\mathcal{C}_z=\{w^k=S(x,z(x-x_0)+y_0)\}$ to an elliptic curve (and always the same as the possible $j$ invariant is unique). Now considering the Jacobian $\hbox{Jac}(\mathcal{C}_z)$, such morphism induces an elliptic factor in this Jacobian. Thus each independent solution $(J,V)$ of the Mordell Weil group will define an elliptic factor of $\hbox{Jac}(\mathcal{C}_z)$, and with the same $j$ invariant. Thus the rank is bounded by the largest power of an elliptic curve in $\hbox{Jac}(\mathcal{C}_z)$.

Remark that in the case $k=2$, $\hbox{Jac}(\mathcal{C}_z)$ could have different elliptic factors, but only one would correspond to the $j$ invariant found with monodromy numbers $c_i,c_j$. For $k=3,4,6$, there cannot be elliptic factors for $j\neq 1728,0,1728$ respectively, thus only one curve has to be considered. We will focus on these cases which are easier.\\

\noindent
\textbf{Example with many elliptic factors}\\
Consider the superelliptic curve
$$w^3=u(u-1)(5u^2-5u+8)$$
This curve is of genus $3$, and its Jacobian decomposes in $3$ elliptic factors, giving rise to the following formulas
$$\int^{\frac{(u+4)^3(u-1)^2}{(5u^2-5u+8)^2)}} \frac{1}{(s(s-1))^{2/3}} ds=-5\int 20^{1/3}\frac{1}{w} du$$
$$\int^{\frac{(i\sqrt{15}-10u+5)^3}{20(3i\sqrt{15}+10u-5)}} \frac{1}{(s(s-1))^{2/3}} ds =-10 \int 2^{1/3}\frac{2u-1+i\sqrt{15}}{w^2} du$$
$$\int^{\frac{(-i\sqrt{15}-10u+5)^3}{20(-3i\sqrt{15}+10u-5)}} \frac{1}{(s(s-1))^{2/3}} ds =-10\int 2^{1/3}\frac{2u-1-i\sqrt{15}}{w^2} du$$
Now if this curve appears as a section $\mathcal{C}_z$, we should be able to deform it continuously and still keeping elliptic factors. If we ask to keep all $3$, then the only possible deformation should give a family of isomorphic curves (as their Jacobian would be isomorphic). This gives a first integral of the form
$$\mathcal{F}(x,y)=\int^{h_y(x)} \frac{1}{(u(u-1)(5u^2-5u+8))^{2/3}} du$$
where $h$ is a Moebius transformation in $x$ depending rationally in $y$. We then expect that this case should fall in the ``Not handled'' case as it can be written with an elliptic integral
$$\mathcal{F}(x,y)=\frac{1}{20 2^{1/3}} \int^{\frac{5}{8}h_y(x)^2-\frac{8}{8} h_y(x)+1} \frac{1}{(s(s-1))^{2/3}} ds $$
however, the previous morphism to the superelliptic curve would be found in previous step, and thus algorithm would return $h_y(x)$.\\

\noindent
\textbf{Example with non trivial deformation}\\
We need a non trivial deformation to avoid having a morphism to a superelliptic curve, but still keep an elliptic factor. If there are $g-1$ elliptic factors, then by quotient of the Jacobian will have in fact $g$ factors, and thus the Jacobian is fixed. Thus we need to have at most $g-2$ elliptic factors, and so the smallest interesting genus is $3$. Consider the curve
$$w^3=(2y^3x^3+2x+2)(x+1)$$
where $y$ is a parameter. For almost any value of $y$, we have exactly one elliptic factor, giving rise to the formula
$$\int^{-2\frac{y^3x^3}{x+1}} \frac{1}{(s(s-1))^{2/3}} ds=\int \frac{-2^{1/3} y(2x+3)}{w^2} du$$
Now differentiating the left hand side in $x$ and $y$, we obtain the differential form
$$\frac{-2^{1/3} y(2x+3)}{((2y^3x^3+2x+2)(x+1))^{2/3}} dx -3\frac{2^{1/3} x(x+1)}{((2y^3x^3+2x+2)(x+1))^{2/3}} dy$$
This defines a $3$-Darbouxian first integral, and the corresponding vector field is
$$\dot{x}=3x(x+1),\; \dot{y}=y(2x+3).$$
This vector field admits however a rational first integral $y^3x^3/(x+1)$ which is of degree $6$, and thus smaller than the bound in step $5$ which is $6\times (7-1)=36$, and thus would be found even if no morphisms to a superelliptic curve $g>1$ exists.\\

\noindent
\textbf{Example with non trivial deformation and double elliptic factor (see \cite{kloosterman2008elliptic} for a comparable example)}\\
To obtain arbitrary large degree rational first integral, we need to combine at least two elliptic integrals and thus need at least $2$ elliptic factors, and so with sections of genus at least $4$. To build such an example, we consider functions $J$ of the form
$$J=\frac{r(x+a_0)^3}{x^2+xb_1+b_0}$$
such that $J(J-1)$ is up to cubes a polynomial of the form $x(x-1)(x^3+c_2x^2+c_1x+c_0)$ (this form being chosen to fix the representation of a genus $4$ curve up to Moebius transformation). This gives for the transformation and the curve
$$J=\frac{r(x+a_0)^3}{3rxa_0^2+ra_0^3+3rxa_0+rx+x^2-x}$$
$$w^3=rx(x-1)(rx+3ra_0+r-1)(3rxa_0^2+ra_0^3+3rxa_0+rx+x^2-x)$$
Now this is a two parameter family of genus $4$ curves with one elliptic factor, and we compute the self intersection of this surface. This self intersection consists of $3$ $\mathbb{Q}$-irreducible components of dimension $1$, two of them being genus $1$ curves, and one $\bar{\mathbb{Q}}$ reducible of genus $0$
$$-6ra_0^2-6ra_0-r+2a_0+1\pm i\sqrt{3} (2ra_0+r-1)=0$$
Thus on this curve in the parameter space, there are two different $J$ leading to the same genus $4$ curve, which will then have a double elliptic factor. Making a parameter change $a_0=y(1-i\sqrt{3})/(i\sqrt{3}-2y+1)$, this gives
$$J_1,J_2=\frac{(i\sqrt{3}(y-x)+2yx-y-x)^3}{8(y^2x-yx-y+x)(y^2x-y^2-yx+x)},\frac{(-i\sqrt{3}(y-x)+2yx-y-x)^3}{8(y^2x-yx-y+x)(y^2x-y^2-yx+x)}$$
$$w^3=(y^2x-yx-y+x)(y-x)(y^2x-y^2-yx+x)x(x-1)$$
We can now consider a linear combination of the two elliptic integrals
$$\mathcal{F}(x,y)=(1+i\sqrt{3}\alpha) \int^{J_1} \frac{dz}{(z(z-1))^{2/3}}+(1-i\sqrt{3}\alpha) \int^{J_2} \frac{dz}{(z(z-1))^{2/3}}$$
\begin{footnotesize}
$$=\alpha \int \frac{((3x^2y^2-6xy^3-3x^2y+6xy^2+3y^3+3x^2-6xy)dx-3x(x-1)(x-2xy+y^2)dy}{((y^2x-yx-y+x)(y-x)(y^2x-y^2-yx+x)x(x-1))^{2/3}}+ $$
$$\int \frac{(2y^3x^2-2y^3x-3y^2x^2-y^3+2y^2x+3yx^2+2y^2-2yx-x^2) dx-x(x-1)(2xy^2-2xy-y^2+5x-4y)dy}{((y^2x-yx-y+x)(y-x)(y^2x-y^2-yx+x)x(x-1))^{2/3}}$$
\end{footnotesize}
Thus the vector field
$$\dot{x}=-x(x-1)(6\alpha xy-3\alpha y^2-2xy^2-3\alpha x+2xy+y^2-5x+4y)$$
$$\dot{y}=((3y^2-3y+3)x^2-6y(y^2-y+1)x+3y^3)\alpha+(2y^3-3y^2+3y-1)x^2-2(y^3-y^2+y)x-y^3-2y^2$$
admits a rational first integral if and only if
$$\frac{1+i\sqrt{3}\alpha}{1-i\sqrt{3}\alpha}\in \mathbb{Q}\left(e^{2i\pi/3}\right) \Leftrightarrow \alpha \in \mathbb{Q}(i\sqrt{3}).$$
This first integral can be of arbitrary degree, and thus once it is of degree $> 6\times (9-1)=48$, the algorithm \underline{ReduceDarbouxian} will reach step $7$ where it will return ``Not handled''.\\

\noindent
\textbf{A Lins Neto example (see \cite{neto2002some}, \cite{guillot2002exemples})}\\
A similar example was found by Lins Neto with a $6$-Darbouxian first integral
$$\dot{x}=y^2+4x-9x^2+2(2\alpha+1)y(1-2x), \; \dot{y}=3(2\alpha+1)(x^2-y^2)+6y(1-2x)$$
This vector field admits a $6$-Darbouxian first integral given by the formula $\mathcal{F}(x,y)=$
$$\int \frac{(9x^2+4(2\alpha+1)xy-y^2-4x-2(2\alpha+1)y)dx+(9x^2+60xy-y^2-4x-30y)dy}{(9x^4+6x^2y^2+y^4-4x^3-12xy^2+4y^2)^{5/6}}$$
and depends on a parameter $\alpha$. The closed $1$-form has no poles as the denominator is $(9x^4+6x^2y^2+y^4-4x^3-12xy^2+4y^2)^{5/6}$, and thus fits the case ``Not handled''. To look for rational first integrals, we need that $I(x,y)=f(J(x,y))$ with $J\in\mathbb{C}(x,y)$ and $f$ a superelliptic integral. If the superelliptic integral is of genus $\geq 2$, then we should find a rational first integral of degree $\leq 6\times (4-1)=18$. Beyond this bound, \underline{ReduceDarbouxian} would not find the rational first integral in step $6$ (nor a Darbouxian first integral of degree $\leq 1$ in step $5$), and thus would reach step $7$ where it would return ``Not handled'', due to the possibility of $f$ being an elliptic integral. Using Lemma \ref{lem3}, the possible elliptic integral for $f$ is
$$f(z)=\int \frac{1}{(z^3(z-1)^4)^{1/6}} dz$$
It appears that $\mathcal{F}$ can in fact be written $\mathcal{F}(x,y)=$
$$\frac{1}{12}\int^{\frac{4(y-x)^3}{9x^4+6x^2y^2+y^4-4x^3-12xy^2+4y^2}} \!\!\!\!\!\! \frac{dz}{((z-1)^3z^4)^{1/6}}-\frac{\alpha}{12}\int^{\frac{4(2x-1)^3}{9x^4+6x^2y^2+y^4-4x^3-12xy^2+4y^2}} \!\!\!\!\!\!\frac{ dz}{((z-1)^3z^4)^{1/6}}$$
The existence of such expression comes from two elliptic factors in the Jacobian of $w^6=9x^4+6x^2y^2+y^4-4x^3-12xy^2+4y^2$ ($y$ seen as a parameter), which is a curve of genus $7$. As in the previous example, to be able to regroup the two integrals, we need $\alpha\in \mathbb{Q}(e^{2i\pi/3})$ and this is the condition for having a rational first integral.

\section{Conclusion}

These algorithms are implemented in Maple 2021 and are available on my web page http://combot.perso.math.cnrs.fr/software.html. For practical computations, the Liouvillian reduction algorithm is the fastest as it does not require the computation of rational first integrals up to some large bound. This is typically the most costly part of the algorithms \underline{ReduceRiccati} and \underline{ReduceDarbouxian}. The question of the size of the output depending on the size of the input stays open.  For Liouvillian reduction, the rational or Darbouxian first integral returned always has either small degree of structure which guarantees a compact representation of the output. For Riccati  reduction, the Liouvillian solutions obtained by Kovacic algorithm can be of arbitrary larger degree, and their degree depend on the local exponents at singularities. Except when there exists a pullback to a (solvable) hypergeometric equation, it is unclear if it would be possible to present them in a compact representation, i.e. a representation whose size would depend only on the degree of the input and not the local exponents. For Darbouxian reduction, two difficult cases appear. First the algorithm is not complete, as the elliptic pullback case is left aside. The example built by Lins Neto has a $6$-Darbouxian first integral which is a linear combination of elliptic pullbacks. In this case, we can always represent the rational first integrals as sums and multiplications on an elliptic curve on finitely many rational functions. However these base functions are a generator set of the Mordell Weil group of an elliptic threefold, and it is unknown if we can bound them in function of the degree of the threefold. The second difficult situation is that for a $2$-Darbouxian first integral, we face at some point the need to compute the torsion order of a divisor in the Jacobian of a hyperelliptic curve with coefficients in a quadratic extension of $\mathbb{Q}$. Although a bound is known for elliptic curves \cite{7}, in larger genus the existence of such bound is unclear \cite{6}. Thus even fixing the degree and the residues, the torsion order could become arbitrary large, and thus the possible rational first integral could become of arbitrary large degree. Thus, we cannot rule out the existence of a sequence of vector fields, with fixed degree, with fixed eigenvalues at singular points, admitting elementary $2$-Darbouxian first integrals of fixed degree, but admitting an arbitrary large degree minimal rational first integral.

\label{}
\bibliographystyle{plain}
\bibliography{reduction}

\begin{thebibliography}{10}

\bibitem{barkatou2012computing}
Moulay~A Barkatou, Thomas Cluzeau, Carole El~Bacha, and J-A Weil.
\newblock Computing closed form solutions of integrable connections.
\newblock In {\em Proceedings of the 37th International Symposium on Symbolic
  and Algebraic Computation}, pages 43--50, 2012.

\bibitem{bertrand1995computing}
Laurent Bertrand.
\newblock Computing a hyperelliptic integral using arithmetic in the jacobian
  of the curve.
\newblock {\em Applicable Algebra in Engineering, Communication and Computing},
  6(4-5):275--298, 1995.

\bibitem{bronstein1998symbolic}
Manuel Bronstein.
\newblock Symbolic integration tutorial.
\newblock Citeseer, 1998.

\bibitem{casale2004groupoide}
Guy Casale.
\newblock {\em Sur le groupo{\"\i}de de Galois d'un feuilletage}.
\newblock PhD thesis, Universit{\'e} Paul Sabatier-Toulouse III, 2004.

\bibitem{cheze2014decomposition}
Guillaume Ch{\`e}ze.
\newblock {\em D{\'e}composition et int{\'e}grales prem{\`\i}{\`e}res
  rationnelles: algorithmes et complexit{\'e}}.
\newblock PhD thesis, Universit{\'e} Paul Sabatier (Toulouse), 2014.

\bibitem{cheze2020symbolic}
Guillaume Ch{\`e}ze and Thierry Combot.
\newblock Symbolic computations of first integrals for polynomial vector
  fields.
\newblock {\em Foundations of Computational Mathematics}, 20(4):681--752, 2020.

\bibitem{combot2021elementary}
Thierry Combot.
\newblock Elementary integration of superelliptic integrals.
\newblock In {\em Proceedings of the 2021 on International Symposium on
  Symbolic and Algebraic Computation}, ISSAC '21, page 99–106, New York, NY,
  USA, 2021. Association for Computing Machinery.

\bibitem{guillot2002exemples}
Adolfo Guillot.
\newblock Sur les exemples de lins neto de feuilletages alg{\'e}briques.
\newblock {\em Comptes Rendus Mathematique}, 334(9):747--750, 2002.

\bibitem{hendriks1995galois}
Peter~A Hendriks and Marius van~der Put.
\newblock Galois action on solutions of a differential equation.
\newblock {\em Journal of Symbolic Computation}, 19(6):559--576, 1995.

\bibitem{hindry1988canonical}
Marc Hindry and Joseph~H Silverman.
\newblock The canonical height and integral points on elliptic curves.
\newblock {\em Inventiones mathematicae}, 93(2):419--450, 1988.

\bibitem{hulek2011calculating}
Klaus Hulek and Remke Kloosterman.
\newblock Calculating the mordell-weil rank of elliptic threefolds and the
  cohomology of singular hypersurfaces.
\newblock In {\em Annales de l'Institut Fourier}, volume~61, pages 1133--1179,
  2011.

\bibitem{kimura1969riemann}
Tosihusa Kimura.
\newblock On riemann?s equations which are solvable by quadratures.
\newblock {\em Funkcial. Ekvac}, 12(269-281):1970, 1969.

\bibitem{kloosterman2008elliptic}
Remke Kloosterman.
\newblock The elliptic threefold y\^{} 2= x\^{} 3+ 16s\^{} 6+ 16t\^{} 6-32
  (t\^{} 3s\^{} 3+ t\^{} 3+ s\^{} 3)+ 16.
\newblock {\em arXiv preprint arXiv:0812.3222}, 2008.

\bibitem{kovacic1986algorithm}
Jerald~J Kovacic.
\newblock An algorithm for solving second order linear homogeneous differential
  equations.
\newblock {\em Journal of Symbolic Computation}, 2(1):3--43, 1986.

\bibitem{6}
Franck Lepr{\'e}vost.
\newblock Jacobiennes de certaines courbes de genre $2 $: torsion et
  simplicit{\'e}.
\newblock {\em Journal de th{\'e}orie des nombres de Bordeaux}, 7(1):283--306,
  1995.

\bibitem{maier2007192}
Robert Maier.
\newblock The 192 solutions of the heun equation.
\newblock {\em Mathematics of Computation}, 76(258):811--843, 2007.

\bibitem{mason1984diophantine}
Richard~C Mason.
\newblock {\em Diophantine equations over function fields}, volume~96.
\newblock Cambridge University Press, 1984.

\bibitem{neto2002some}
Alcides~Lins Neto.
\newblock Some examples for the poincar{\'e} and painlev{\'e} problems.
\newblock In {\em Annales Scientifiques de l?{\'E}cole Normale Sup{\'e}rieure},
  volume~35, pages 231--266. Elsevier, 2002.

\bibitem{oort2016riemann}
Frans Oort.
\newblock The riemann-hurwitz formula.
\newblock {\em ALM}, 35:567--594, 2016.

\bibitem{pereira2002poincare}
Jorge~Vit{\'o}rio Pereira.
\newblock On the poincar{\'e} problem for foliations of general type.
\newblock {\em Mathematische Annalen}, 323(2):217--226, 2002.

\bibitem{7}
Markus~A Reichert.
\newblock Explicit determination of nontrivial torsion structures of elliptic
  curves over quadratic number fields.
\newblock {\em mathematics of computation}, 46(174):637--658, 1986.

\bibitem{shaska2015case}
Tony Shaska, Eustrat Zhupa, and Lubjana Beshaj.
\newblock The case for superelliptic curves.
\newblock {\em arXiv preprint arXiv:1502.07249}, 2015.

\bibitem{silverman1994advanced}
Joseph~H Silverman.
\newblock {\em Advanced topics in the arithmetic of elliptic curves}, volume
  151.
\newblock Springer Science \& Business Media, 1994.

\bibitem{trager1984integration}
Barry~Marshall Trager.
\newblock {\em Integration of algebraic functions}.
\newblock PhD thesis, Massachusetts Institute of Technology, 1984.

\bibitem{ulmer1996note}
Felix Ulmer and Jacques-Arthur Weil.
\newblock Note on kovacic's algorithm.
\newblock {\em Journal of Symbolic Computation}, 22(2):179--200, 1996.

\bibitem{van2012galois}
Marius Van~der Put and Michael~F Singer.
\newblock {\em Galois theory of linear differential equations}, volume 328.
\newblock Springer Science \& Business Media, 2012.

\bibitem{van2005solving}
Mark Van~Hoeij and J-A Weil.
\newblock Solving second order linear differential equations with klein's
  theorem.
\newblock In {\em Proceedings of the 2005 international symposium on Symbolic
  and algebraic computation}, pages 340--347, 2005.

\end{thebibliography}

\end{document}